\definecolor{rev1}{HTML}{cb270f}
\definecolor{rev2}{HTML}{1c8235}
\newcommandx{\at}[2][1=]{\todo[linecolor=red,backgroundcolor=red!25,bordercolor=red,#1]{#2}}
\numberwithin{equation}{section}
\newtheorem{theorem}{Theorem}
\newtheorem{remark}{Remark}
\newtheorem{example}{Example}
\newtheorem{lemma}{Lemma}
\newtheorem{proposition}{Proposition}
\newtheorem{definition}{Definition}
\numberwithin{theorem}{section}
\numberwithin{lemma}{section}
\numberwithin{corollary}{section}
\numberwithin{proposition}{section}
\numberwithin{definition}{section}
\numberwithin{example}{section}
\newcounter{algorithmicH}
\let\oldalgorithmic\algorithmic
\renewcommand{\algorithmic}{
  \stepcounter{algorithmicH}
  \oldalgorithmic}
\renewcommand{\theHALG@line}{ALG@line.\thealgorithmicH.\arabic{ALG@line}}
\newcommand*{\OS}{{\Omega_{\mathrm{SA}}}}
\newcommand*{\MAW}{{\mathcal{M}_{\mathrm{AW}}^U}}
\newcommand*{\spec}{{\mathrm{Sp}}}
\newcommand*{\dom}{{\mathcal{D}}}
\newcommand*{\ran}{{\mathrm{ran}}}
\newcommand*{\cl}[1]{{\text{\rm Cl}\left(#1\right )}}
\newcommand*{\dist}{{\mathrm{dist}}}
\newcommand*{\diag}{{\mathrm{diag}}} 
\newcommand*{\dAW}{{d_{\mathrm{AW}}^U}}
\newcommand*{\OD}{{\Omega_{\mathrm{D}}}}
\newcommand{\pderiv}[2]{\frac{\partial #1}{\partial #2}}
\providecommand{\keywords}[1]{\textbf{\textit{Keywords:}} #1}
\title{Universal Methods for Nonlinear Spectral Problems}
\author{Matthew J. Colbrook\thanks{DAMTP, University of Cambridge. ({m.colbrook@damtp.cam.ac.uk})} \and Catherine Drysdale\thanks{SMQB, University of Birmingham. (c.n.d.drysdale@bham.ac.uk)}}
\date{}
\begin{document}

\maketitle

\begin{abstract}
Nonlinear spectral problems arise across a range of fields, including mechanical vibrations, fluid-solid interactions, and photonic crystals. Discretizing infinite-dimensional nonlinear spectral problems often introduces significant computational challenges, particularly spectral pollution and invisibility, which can distort or obscure the true underlying spectrum. We present the first general, convergent computational method for computing the spectra and pseudospectra of nonlinear spectral problems. Our approach uses new results on nonlinear injection moduli and requires only minimal continuity assumptions: specifically, continuity with respect to the gap metric on operator graphs, making it applicable to a broad class of problems. We use the Solvability Complexity Index (SCI) hierarchy, which has recently been used to resolve the classical linear problem, to systematically classify the computational complexity of nonlinear spectral problems. Our results establish the optimality of the method and reveal that Hermiticity does not necessarily simplify the computational complexity of these nonlinear problems. Comprehensive examples -- including nonlinear shifts, Klein--Gordon equations, wave equations with acoustic boundary conditions, time-fractional beam equations, and biologically inspired delay differential equations -- demonstrate the robustness, accuracy, and broad applicability of our methodology.
\end{abstract}

\keywords{computational spectral problem, nonlinear spectral problems, spectral pollution, solvability complexity index hierarchy, pseudospectra\\
\indent\textit{\textbf{MSC2020:}} {35P30,
46N40,
47A10,
47J10,
65Hxx,
65J10,
65N30}}

\section{Introduction}

Nonlinear spectral problems in which the spectral parameter appears nonlinearly arise in numerous applications, including mechanical vibrations~\cite{lancaster2002lambda}, fluid-solid interactions~\cite{voss2002rational}, photonic crystals~\cite{sakoda2001photonic}, time-delay systems~\cite{jarlebring2008spectrum}, resonances~\cite{bindel2006theory}, and many other areas~\cite{steinbach2009boundary,kukelova2008polynomial,mehrmann2011nonlinear}. Such problems can originate from eigenvalue-dependent boundary conditions~\cite{botchev2009svd} or material parameters~\cite{engstrom2010complex}, to name a few. They can also arise as artifacts of discretization using particular basis functions~\cite{betcke2005reviving} or from truncating an infinite domain with transparent boundary conditions~\cite{liao2010nonlinear}. In the latter case, the truncation leads to a nonlinear eigenvalue problem.

Solving nonlinear spectral problems accurately is essential, as even small errors can lead to drastically incorrect predictions in mechanical systems, fluid dynamics, and photonic material design, potentially causing operational failures or misleading scientific insights. However, as recently demonstrated in~\cite{colbrook2025avoiding} using several benchmark problems, discretizing infinite-dimensional nonlinear spectral problems can introduce significant challenges. Standard techniques, such as computing eigenvalues of increasingly large finite-dimensional matrices by refining a mesh or increasing a truncation dimension, can fail to capture the true spectrum. This failure can take two forms: \textit{spectral invisibility}, where parts of the spectrum are missed, and \textit{spectral pollution}, where spurious eigenvalues persist in the approximation of the spectrum. 

A convergent method is one that avoids spectral invisibility and pollution. While~\cite{colbrook2025avoiding} introduced a convergent contour-based method for holomorphic problems with discrete spectrum, no general approach currently exists for nonlinear spectral problems. In this paper, we close this gap by presenting the first universally convergent computational framework for computing the spectra and pseudospectra of nonlinear spectral problems. Our methods apply in full generality: they require only that the nonlinear pencil is continuous in the gap topology and make no assumptions about whether the spectrum is discrete, continuous, or both. Finally, we prove that our methods are optimal by establishing fundamental impossibility results that hold in any model of computation.

\subsection{Classical (linear) computational spectral problem and SCI hierarchy}

``Can one compute the spectrum of a linear operator on a separable Hilbert space?" was an important open question for several decades that originated from the work of Szeg{\H o} \cite{Szego} on finite section approximations and Schwinger \cite{Schwinger} on finite-dimensional approximations to quantum systems.\footnote{See also the work of B{\"o}ttcher~\cite{Albrecht_Fields,Bottcher_pseu}, B{\"o}ttcher \& Silbermann~\cite{Bottcher_book,bottcher2006analysis}, Laptev \& Safarov~\cite{Laptev}, and Brown~\cite{brown2007quasi,Brown_2006,Brown_Memoars}. B{\"o}ttcher \& Silbermann~\cite{Albrecht1983} pioneered the combination of spectral computation and $C^*$-algebras.} This classical \textit{linear} problem was recently resolved using the Solvability Complexity Index (SCI) hierarchy~\cite{ben2015can,Hansen_JAMS}, which provides a structured way to understand how computationally difficult it is to solve certain problems, revealing intrinsic limitations that dictate the minimum complexity required for accurate numerical solutions.

A key insight from this work is that computing the spectrum of a bounded linear operator on a Hilbert space may require multiple successive limits. For example, the algorithm $\Gamma_{n_3,n_2,n_1}$ from \cite{Hansen_JAMS} converges to the spectrum in the Hausdorff metric via a three-level limiting process:
\begin{equation}
\label{SCI_mult_lim_example}
\lim_{n_3\rightarrow\infty}\lim_{n_2\rightarrow\infty}\lim_{n_1\rightarrow\infty}\Gamma_{n_3,n_2,n_1}(A)=\mathrm{Sp}(A)\quad\forall A\in\mathcal{B}(l^2(\mathbb{N})),
\end{equation}
where $\mathcal{B}(l^2(\mathbb{N}))$ denotes the class of bounded operators on $l^2(\mathbb{N})$. Later results confirmed that no algorithm can reduce the number of limits required~\cite{ben2015can}, revealing a fundamental analytical complexity in spectral computation for infinite-dimensional operators. However, for more restricted classes of operators, spectra can be computed with fewer limits~\cite{colbrook2019compute,colbrook3}.

In this paper, we provide results for nonlinear spectral problems. Building on the SCI framework~\cite{ben2015can,Hansen_JAMS,colbrook2020PhD}, we provide the first general methodology for computing spectra in this setting. The SCI hierarchy plays a central role by precisely classifying which nonlinear spectral problems are solvable and how many limits are required, thereby delineating the boundary between feasible and infeasible computation. This leads naturally to a classification theory and guides the construction of optimal algorithms. Despite its importance, the classification of spectral problems and the development of a comprehensive library of optimal algorithms remain largely unexplored.


\subsection{Contributions and roadmap}

We consider nonlinear operator pencils of the form $T: U \to \mathcal{C}(\mathcal{H}_1, \mathcal{H}_2)$, where $\mathcal{C}(\mathcal{H}_1, \mathcal{H}_2)$ denotes the set of closed operators between two separable Hilbert spaces $\mathcal{H}_1$ and $\mathcal{H}_2$. Here, $U \subset \mathbb{C}$ is a domain, and we require only that $T$ is continuous with respect to the gap topology—that is, the topology induced by the gap metric on the graphs of operators. This continuity assumption is extremely weak and covers most applications of interest. It allows us to prove a perturbation result for injection moduli (\cref{prop_sigma_inf_cts}), which cannot hold under weaker assumptions such as strong continuity (\cref{exam_weaker_false}).

A detailed account of this framework is provided in \cref{sec:2:setup}, where we highlight key differences from the linear case and review the relevant aspects of the SCI hierarchy. We also construct an appropriate metric on the space of relatively closed subsets of $U$, with special care taken near the boundary $\partial U$. This generality allows us to treat singular examples, such as the one-dimensional pencil $T(z) = \sin(1/z)$ on $U = \mathbb{C} \setminus {0}$, and to compute spectra without encountering spectral pollution or invisibility.

In \cref{sec:convergent_algs}, we present our algorithms and prove their convergence. The core of our approach involves nonlinear injection moduli, which play a key role in enabling the computation of pseudospectra. We consider two sets of assumptions under which the algorithms operate: $\Lambda_1$, where we have access to matrix elements of $T$ with respect to suitable bases; and $\Lambda_2$, where we additionally have access to matrix elements of $T^*T$ and $TT^*$. Under assumption $\Lambda_1$, we show that pseudospectra can be computed through two successive limits and spectra through three successive limits. Under the stronger assumption $\Lambda_2$, the number of required limits is reduced to one for pseudospectra and two for spectra.

These methods provide the first general convergent framework for nonlinear spectral problems. A key advantage of the algorithms is that they converge monotonically—either from above or below—which makes them particularly well-suited for verification. For example, under assumption $\Lambda_2$, one can incorporate techniques such as interval arithmetic to turn the procedure into a fully rigorous computation. This ensures that the output sets are guaranteed subsets of the true pseudospectra, making the approach especially valuable in contexts such as computer-assisted proofs.

We also prove that our algorithms are optimal by establishing lower bounds within the SCI hierarchy (\cref{SCI_bounds1}). Furthermore, we extend these bounds to Hermitian problems (\cref{hermitian_still hard}) and show—perhaps surprisingly—that, unlike in the linear case, Hermiticity does not necessarily lead to a lower SCI classification. In other words, the problem does not become inherently easier simply because the operator is Hermitian.

Finally, in \cref{sec:examples}, we demonstrate the effectiveness of our methods through a range of computational examples, including nonlinear shifts, Klein--Gordon equations, wave equations with acoustic boundary conditions, time-fractional viscoelastic beam equations, and a diffusive Lotka--Volterra predator–prey system with delay. These examples highlight several further important points:
standard naive methods may fail to converge;
our framework applies equally well to differential operators, discrete systems, and lattice models;
it enables the computation of pseudoeigenfunctions and yields sharper pseudospectral estimates than those obtained via nonlinear numerical range techniques (a key advantage for contour methods used in solving evolution equations via inverse Laplace transforms); and the approach performs effectively on complex, real-world systems. Together, these comprehensive numerical examples illustrate both the theoretical strength and the practical utility of our framework, enabling robust and reliable predictions across a broad spectrum of applications in physics, engineering, and biology.

\subsection{Connections with previous work}

Before presenting our results, it is useful to connect them with previous work. We have already mentioned~\cite{colbrook2025avoiding}, which develops a contour method for computing eigenvalues of holomorphic spectral problems. To our knowledge, the only existing SCI classification for nonlinear spectral problems is that by R{\"o}sler and Tretter~\cite{rosler2022computing}, who consider Klein-Gordon spectra. See also the work of Ben-Artzi, Marletta, and R\"osler on computing scattering resonances via spectral covers \cite{Jonathan_res,benartzi2020computing}. Work by Zworski~\cite{Zworski1,Zworski2} can be interpreted within the SCI hierarchy framework. In particular, the approaches presented in~\cite{Zworski1} (scattering resonances) and~\cite{dyatlov2015stochastic} (Pollicott--Ruelle resonances) rely on representing resonances as limits of non-self-adjoint spectral problems, and hence the SCI hierarchy is inevitable; see also~\cite{ZworskiSjostrand}. Additionally, see Galkowski and Zworski~\cite{galkowski2022viscosity} for viscosity limits in connection with the internal waves problem.

One motivation for this paper is the discovery by Chandler--Wilde, Chonchaiya, and Lindner~\cite{chandler2024spectral} that banded operators on $l^2(\mathbb{Z})$ admit spectral covers under suitable conditions, enabling algorithms for computing spectra and rigorous spectral exclusions. Their results can be viewed as an extension of Gershgorin's theorem~\cite{gershgorin1931uber}, generalizing it to a family of inclusion sets for the spectra of bi-infinite tridiagonal matrices and relying on the injection modulus. The authors in~\cite{chandler2024spectral} assume access to all finite patches of a given size for the operator. We anticipate that it may be possible to combine their techniques with those presented in this paper.

The multiple-limit structure in \cref{SCI_mult_lim_example} appears in other areas of computational mathematics. An early instance is Smale's polynomial root-finding problem with rational maps \cite{smale_question}, involving several successive limits as demonstrated by McMullen \cite{McMullen1,mcmullen1988braiding} and Doyle \& McMullen~\cite{Doyle_McMullen}. These results can be expressed within the SCI hierarchy \cite{ben2015can}, generalizing Smale's seminal contributions on the foundations of scientific computing and the existence of algorithms \cite{smale1981fundamental,Smale_Acta_Numerica,BCSS}. The SCI framework is now used extensively to investigate foundational computational questions across diverse areas, including resonance problems \cite{Jonathan_res,benartzi2020computing}, inverse problems \cite{AdcockHansenBook}, optimization problems \cite{SCI_optimization}, foundations of AI \cite{colbrook2022difficulty}, and data-driven dynamics \cite{colbrook2024limits}. Additionally, Olver, Townsend, and Webb have established a fundamental framework for infinite-dimensional numerical linear algebra and computational approaches to infinite data structures, yielding significant theoretical insights as well as practical algorithms \cite{Olver_Townsend_Proceedings,Olver_SIAM_Rev,webb_thesis,webb2021spectra}.

\section{Nonlinear spectral problems}
\label{sec:2:setup}

In this section, we describe the setup of our problem and also collect some useful results that will be used later in our proofs.

\subsection{A brief guide to the SCI hierarchy}

The Solvability Complexity Index (SCI) hierarchy~\cite{colbrook2020PhD,colbrook3,Hansen_JAMS}
provides a framework for classifying the difficulty of computational problems and for proving the optimality of algorithms. We briefly introduce the SCI hierarchy to establish notation and provide context for our results. We begin with a formal definition of a computational problem, discuss the structure of the algorithms used to approximate solutions, and then explain how to classify the problem's difficulty.

\begin{definition}[Computational problem]
A computational problem is a quadruple 
$\{\Xi, \Omega, \mathcal{M},\Lambda\}$ comprising of the following elements:
\begin{itemize}[leftmargin=7mm]
\item $\Xi: \Omega \to \mathcal{M}$, the quantity of interest we seek to compute.\\
-- In our context, this might be a set (such as the spectrum ${\rm Sp}$).
\item $\Omega$, the class of inputs.\\
-- In our context, this might be a class of nonlinear operator pencils.
\item $\mathcal{M}\equiv (\mathcal{M},d)$, a metric space used to evaluate how well $\Xi$ has been approximated.\\
-- In our context, this will be a suitable metric on relatively closed subsets of an open set $U$.
\item $\Lambda$, the evaluation set.  
This set of complex-valued functions $\Omega$ describes the information about $A\in \Omega$ 
to which algorithms are given access and must be sufficiently rich: if $f(A)=f(B)$ for all $f\in\Lambda$, then $\Xi(A)=\Xi(B)$. 
\end{itemize}
\end{definition}

To approximate the solution to a computational problem $\{\Xi, \Omega, \mathcal{M},\Lambda\}$, we apply an \emph{algorithm} $\Gamma: \Omega \to \mathcal{M}$. In this context, an algorithm can only access any $A \in \Omega$ through a \emph{finite number} of evaluation functions $f \in \Lambda_\Gamma(A) \subset \Lambda$. The set $\Lambda_\Gamma(A)$ could be adaptively chosen on the fly by the algorithm. However, the essential property is that the result of the algorithm depends solely on this information.

\begin{definition}[General algorithm]\index{general algorithm!deterministic}
\label{CHAP2_def:Gen_alg_NEW}
A general algorithm for a computational problem $\{\Xi,\Omega,\mathcal{M},\Lambda\}$ is a map $\Gamma:\Omega\to \mathcal{M}$ with the following property. For any $A\in\Omega$, there exists a non-empty finite subset of evaluations $\Lambda_\Gamma(A) \subset\Lambda$ such that if $B\in\Omega$ with $f(A)=f(B)$ for every $f\in\Lambda_\Gamma(A)$, then $\Lambda_\Gamma(A)=\Lambda_\Gamma(B)$ and $\Gamma(A)=\Gamma(B)$.
\end{definition}

This is the most basic consistency requirement for any deterministic computational device. We will also talk of an \textit{arithmetic algorithm}, where each $\Gamma(A)$ can be computed using $\Lambda$ and finitely many arithmetic operations and comparisons. Specifying $\Lambda$ is extremely important; for instance, if $\Lambda$ contains arbitrary approximations of $\Xi$, the computational problem becomes trivial. Alternatively, if available, we often wish to incorporate structure into $\Lambda$ or $\Omega$.

\begin{example}
If $\Omega$ consists of bounded linear operators on $\ell^2(\mathbb{Z})$, we could design an algorithm $\Gamma$ such that, for some fixed value of $n > 0$, it accesses $f_{j,k}(A) = \langle A e_j, e_k \rangle$ only for $-n \leq j,k \leq n$ and outputs the eigenvalues of the resulting $(2n+1)\times(2n+1)$ matrix (the finite section method). This is a general algorithm. We can use an arithmetic algorithm to approximate these eigenvalues to any desired accuracy (treated as an additional input).
\end{example}

Most spectral problems for operators on infinite-dimensional spaces cannot be solved exactly using a single algorithm (i.e., finite data), or even a sequence of algorithms $\{\Gamma_n\}$ that converge to the solution as $n \rightarrow \infty$. Instead, one must consider families of algorithms that produce increasingly accurate approximations, with the solution obtained in the limit of their outputs. The complexity of a spectral problem is determined by \emph{how many limits} are required to solve it. This motivates the concept of ``towers'' of algorithms, whose height corresponds to the number of nested limits involved.

\vspace{2mm}

\noindent\textbf{Tower of algorithms:} A \emph{tower of algorithms} of \emph{height $k$} has the following structure. At the base level is a set of algorithms $\{\Gamma_{n_k,\ldots,n_2,n_1}\}$ for the problem $\{\Xi,\Omega,\mathcal{M},\Lambda\}$, indexed by $k$ variables. In typical situations, each of these indices corresponds to a different aspect of the approximation, such as the truncation dimension in the finite section method. However, for our impossibility results, they can be anything. These level 1 algorithms each work with finite data. Subsequent levels are obtained by taking successive limits in each of these indices. For any $A\in\Omega$, the tower is constructed as:
\begin{center}
\begin{tabular}{rrl}
level~1: & $\Gamma_{n_k, \ldots, n_2, n_1}(A)$ \!\!\!\!\!\!& \\[7pt]
level~2: & $\Gamma_{n_k, \ldots, n_2}(A)$ \!\!\!\!&\!\! $\displaystyle{} =\ \lim_{n_1\to\infty} \Gamma_{n_k,\ldots, n_2, n_1}(A)$ \\[7pt]
$\vdots$\ \ \ \ \ \  &  & $\ \,\vdots$ \\[7pt]
level~$k$: & $\Gamma_{n_k}(A)$\!\!\!\!& $\displaystyle{} =\ \lim_{n_{k-1}\to\infty} \Gamma_{n_k,n_{k-1}}(A)$ \\[12pt]
solution:&  $\Xi(A)$\!\!\!\!& $\displaystyle{} =\ \lim_{n_k \to\infty} \Gamma_{n_k}(A)$.
\end{tabular}
\end{center}
\noindent{}The SCI is the fewest number of limits needed to solve a computational problem.

In the following classifications, we need to clarify which model of computation the towers use at their base level $\Gamma_{n_k, \ldots, n_2, n_1}$, designated subsequently by a superscript $\alpha$: $\alpha=A$ designates an arithmetic algorithm\footnote{Throughout, when constructing upper bounds, we work in real number arithmetic, following the foundational model of Smale and coauthors~\cite{BCSS}. However, extending our results to other models—such as interval arithmetic~\cite{tucker2011validated} or Turing machines with inexact input~\cite{turing1937computable}—is straightforward, since the resulting errors can be made to vanish in the first limit. See also the discussion of algorithmic unification in~\cite[Section 2.3]{colbrook2020PhD}.}; $\alpha=G$ designates a general algorithm. We will use the $\alpha=A$ model when constructing practical algorithms and the $\alpha=G$ model when establishing lower bounds on the number of limits needed. There are two primary reasons for these differences:
\begin{itemize}[leftmargin=4mm]
	\item When we prove impossibility results for general algorithms, they immediately apply to any model of computation. Simultaneously, we construct arithmetic algorithms for upper bounds. Hence, we obtain the strongest theorems possible.
	\item The proofs of impossibility results attribute the non-computability to the inherent nature of the problems rather than to overly restrictive operational types. \end{itemize}

\vspace{2mm}

\noindent\textbf{SCI hierarchy:} We can now classify the SCI for a problem $\{\Xi,\Omega,\mathcal{M},\Lambda\}$.
\begin{itemize}[leftmargin=7mm]
\item If $\{\Xi,\Omega,\mathcal{M},\Lambda\}$ can be solved exactly in finitely many operations, we say its SCI is zero, and write $\{\Xi,\Omega,\mathcal{M},\Lambda\} \in \Delta_0^\alpha$.
\item If $\{\Xi, \Omega,\mathcal{M},\Lambda\}$ can be solved with a set of algorithms
$\{\Gamma_n\}$ such that for all $A\in\Omega$, $d(\Gamma_n(A),\Xi(A)) \le 2^{-n}$
(i.e., we can compute the answer in one limit with full control of the error),
we write $\{\Xi,\Omega,\mathcal{M},\Lambda\} \in \Delta_1^\alpha$. Here, $2^{-n}$ can be replaced by any positive sequence tending to $0$ without changing the classification.
\item If $\{\Xi, \Omega,\mathcal{M},\Lambda\}$ can be solved for all $A\in\Omega$
with a tower of algorithms of height $k \geq 1$ or less, we write 
$\{\Xi,\Omega,\mathcal{M},\Lambda\} \in \Delta_{k+1}^\alpha$.
\end{itemize}
When $(\mathcal{M},d)$ has more structure, it is useful to define notions of convergence from above or below. In this paper, we shall use the following.
Let $\mathcal{T}_\alpha$ be the collection of type $\alpha$ towers of algorithms and $(\mathcal{M},d)$ be the Hausdorff or Attouch--Wets metric (defined below) induced by another metric space $(\mathcal{M}',d')$. We set $\Sigma^{\alpha}_0 = \Pi^{\alpha}_0 = \Delta^{\alpha}_0$ and for $m \in \mathbb{N}$ define
\begin{equation*}
\begin{split}
\Sigma^{\alpha}_{m}= \Big\{&\{\Xi,\Omega,\mathcal{M},\Lambda\} {\in} \Delta_{m+1}^\alpha  :  \exists \{\Gamma_{n_{m},\ldots,n_1}\} {\in} \mathcal{T}_\alpha,  \{X_{n_{m}}(A)\}{\subset}\mathcal{M} \text{ s.t. }\forall A \in\Omega\\
&\Gamma_{n_{m}}(A)  \mathop{\subset}_{\mathcal{M}'} X_{n_{m}}(A),\lim_{n_{m}\rightarrow\infty}\Gamma_{n_{m}}(A)=\Xi(A), d(X_{n_{m}}(A),\Xi(A))\leq 2^{-n_{m}}\Big\},\\[1ex]
\Pi^{\alpha}_{m} = \Big\{&\{\Xi,\Omega,\mathcal{M},\Lambda\} {\in} \Delta_{m+1}^\alpha  :  \exists \{\Gamma_{n_{m},\ldots,n_1}\} {\in} \mathcal{T}_\alpha,  \{X_{n_{m}}(A)\}{\subset}\mathcal{M} \text{ s.t. }\forall A \in \Omega\\
&\Xi(A)  \mathop{\subset}_{\mathcal{M}'} X_{n_{m}}(A),\lim_{n_{m}\rightarrow\infty}\Gamma_{n_{m}}(A)=\Xi(A),
 d(X_{n_{m}}(A),\Gamma_{n_{m}}(A))\leq 2^{-n_{m}} \Big\}.
\end{split}
\end{equation*}
Here, $\mathop{\subset}_{\mathcal{M}'}$ means inclusion in the metric space $(\mathcal{M}',d')$. For the Attouch--Wets metric space appended with the empty set, we require that $\Gamma_{n_{m}}(A)=\emptyset$ if $\Xi(A)=\emptyset$ for $\Sigma^{\alpha}_{m}$, and may let $X_{n_{m}}(A)=\Gamma_{n_{m}}(A)$ if $\Xi(A)=\emptyset$ for $\Pi^{\alpha}_{m}$.

\begin{remark}[Verification]
Even though problems in $\Sigma_1^\alpha \cup \Pi_1^\alpha \backslash \Delta_1^\alpha$ are considered non-computable, they can still be used in computer-assisted proofs. For example, consider a conjecture stating that there exists a spectral point in $\{\mathrm{Re}(z)>0\}$. If this is true and we have a $\Sigma_1^A$-tower, $\{\Gamma_n\}$, we can compute $\Gamma_n(A)$ successively and eventually verify the truth of this statement.
\end{remark}

\subsection{The input class and problem functions}

We now recall the notion of \textit{generalized convergence} for closed operators, which extends convergence in the operator norm topology to unbounded operators. This notion is defined via the gap between closed subspaces of a Hilbert space (or, more generally, a Banach space).

\begin{definition}
Let $M$ and $N$ be closed subspaces of a Hilbert space $\mathcal{H}$. We set
$$
\delta(M,N)=\sup_{u\in M,\|u\|=1}\mathrm{dist}(u,N),\quad \hat{\delta}(M,N)=\max\{\delta(M,N),\delta(N,M)\}.
$$
Here, $\delta(\{0\},N)=0$ and $\delta(M,\{0\})=1$ if $M\neq\{0\}$.
\end{definition}

\begin{remark}
Basic properties of this gap are provided in Kato \cite[Chapter IV.2.1]{kato2013perturbation}. A slight modification of this gap yields a metric first introduced by Krein and Krasnoselski \cite{krein1947fundamental}; see also Gohberg and Krein \cite{gokhberg1957basic}, Kato \cite{kato1958perturbation}, and Cordes and Labrousse \cite{cordes1963invariance}. Additionally, we refer the reader to the article by B\"ottcher and Spitkovsky, which discusses other notions of distances between subspaces \cite{bottcher2010gentle}.
\end{remark}

Recall that the graph of an operator $A:\mathcal{D}(A)\rightarrow\mathcal{H}_2$ (where $\mathcal{D}(A)$ is a subspace of $\mathcal{H}_1$) is $\mathrm{gr}(A)=\{(x,Ax):x\in\mathcal{D}(A)\}$. This graph is a subspace of the direct sum $\mathcal{H}_1\oplus \mathcal{H}_2$, and is closed precisely when $A$ is a closed operator. Hence, given closed operators $S,T\in\mathcal{C}(\mathcal{H}_1,\mathcal{H}_2)$, we define their gap as
$$
\delta(S,T)=\delta(\mathrm{gr}(S),\mathrm{gr}(T)),\quad \hat{\delta}(S,T)=\max\{\delta(S,T),\delta(T,S)\}.
$$
This gap possesses several nice properties. For example, if $S$ and $T$ are densely defined, then $\hat{\delta}(S^*,T^*)=\hat{\delta}(S,T)$. Given a sequence $\{T_n\}\subset\mathcal{C}(\mathcal{H}_1,\mathcal{H}_2)$ and $T\in \mathcal{C}(\mathcal{H}_1,\mathcal{H}_2)$, we say $T_n$ converges to $T$ in the generalized sense if $\lim_{n\rightarrow\infty}\hat{\delta}(T_n,T)=0$. The following well-known theorem characterizes this type of convergence.

\begin{theorem}[Basic properties of generalized convergence {\cite[Theorem IV.2.23]{kato2013perturbation}}]
\label{gap_properties}
Let $\{T_n\}\subset\mathcal{C}(\mathcal{H}_1,\mathcal{H}_2)$ and $T\in \mathcal{C}(\mathcal{H}_1,\mathcal{H}_2)$.
\begin{itemize}
	\item[\rm{(i)}] If $T$ is \textbf{bounded}: $\lim_{n\rightarrow\infty}\hat{\delta}(T_n,T)=0$ if and only if $T_n$ is bounded for sufficiently large $n$ with $\lim_{n\rightarrow\infty}\|T_n-T\|=0$.
	\item[\rm{(ii)}] If $T$ is \textbf{boundedly invertible}: $\lim_{n\rightarrow\infty}\hat{\delta}(T_n,T)=0$ if and only if $T_n$ is boundedly invertible for sufficiently large $n$ with $\lim_{n\rightarrow\infty}\|T_n^{-1}-T^{-1}\|=0$.
	\item[\rm{(iii)}] If $T_n$ and $T$ are \textbf{densely defined}: $\lim_{n\rightarrow\infty}\hat{\delta}(T_n,T)=0$ if and only if $\lim_{n\rightarrow\infty}\hat{\delta}(T_n^*,T^*)=0$.
\end{itemize}
\end{theorem}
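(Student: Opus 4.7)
The plan is to translate all three claims into statements about the graphs $\mathrm{gr}(T_n),\mathrm{gr}(T)$ as closed subspaces of $\mathcal{H}_1\oplus\mathcal{H}_2$, prove (i) by a direct geometric argument, and then reduce (ii) and (iii) to (i) via natural unitaries on the ambient direct sums.

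For part (i), I would exploit that when $T$ is bounded, $\mathrm{gr}(T)$ is the range of the injection $\iota_T:\mathcal{H}_1\to\mathcal{H}_1\oplus\mathcal{H}_2$, $x\mapsto(x,Tx)$, and the projection $\pi_1$ onto the first factor inverts $\iota_T$ with $\|(\pi_1|_{\mathrm{gr}(T)})^{-1}\|\leq\sqrt{1+\|T\|^2}$. The converse is immediate: if $T_n$ is eventually bounded with $\|T_n-T\|\to 0$ then $\|(x,T_nx)-(x,Tx)\|\leq\|T_n-T\|\|x\|$ controls both one-sided gaps. For the forward direction, the strategy is to show that the restriction $\pi_1|_{\mathrm{gr}(T_n)}$ is a near-isomorphism onto $\mathcal{H}_1$: smallness of $\delta(\mathrm{gr}(T),\mathrm{gr}(T_n))$ forces each unit $(x,Tx)$ to lie close to some $(x_n,T_nx_n)$ with $x_n$ close to $x$, while smallness of $\delta(\mathrm{gr}(T_n),\mathrm{gr}(T))$ prevents $\mathrm{gr}(T_n)$ from containing vectors with large second coordinate and small first. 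A Neumann-series-type perturbation argument comparing $\pi_1|_{\mathrm{gr}(T_n)}$ to the invertible $\pi_1|_{\mathrm{gr}(T)}$ then yields both $\mathcal{D}(T_n)=\mathcal{H}_1$ for large $n$ and an explicit norm estimate $\|T_n-T\|\lesssim\hat\delta(T_n,T)(1+\|T\|^2)$.

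Part (ii) follows by applying (i) to the bounded operator $T^{-1}$ after noting that the swap $U:\mathcal{H}_1\oplus\mathcal{H}_2\to\mathcal{H}_2\oplus\mathcal{H}_1$, $(x,y)\mapsto(y,x)$, is unitary and sends $\mathrm{gr}(T)$ to $\mathrm{gr}(T^{-1})$, so $\hat\delta(T_n,T)=\hat\delta(T_n^{-1},T^{-1})$ whenever $T_n^{-1}$ exists as a closed operator; a preliminary step verifies that this existence holds for large $n$ because $U\,\mathrm{gr}(T_n)$ is close in gap to $\mathrm{gr}(T^{-1})$, the graph of a bounded operator. Part (iii) uses the identity $\mathrm{gr}(T^*)=(V\,\mathrm{gr}(T))^\perp$ with $V(x,y)=(y,-x)$ unitary, valid for densely defined $T$, together with two facts in Hilbert space: isometries preserve gaps, and $\hat\delta(M,N)=\|P_M-P_N\|=\|P_{M^\perp}-P_{N^\perp}\|=\hat\delta(M^\perp,N^\perp)$. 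Chaining these identities yields the exact equality $\hat\delta(T_n^*,T^*)=\hat\delta(T_n,T)$, from which both implications are trivial.

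The main obstacle is the forward direction of (i): extracting quantitative analytic control (namely $\mathcal{D}(T_n)=\mathcal{H}_1$ and a norm estimate on $T_n-T$) from the purely geometric hypothesis $\hat\delta(T_n,T)\to 0$. The delicate step is showing surjectivity of $\pi_1|_{\mathrm{gr}(T_n)}$ for large $n$, since closeness in gap only gives approximate inclusion of $\mathcal{H}_1\simeq\pi_1(\mathrm{gr}(T))$ in $\pi_1(\mathrm{gr}(T_n))$ up to directions twisted by the second coordinate. Once this surjectivity is established, the norm estimate follows routinely, and parts (ii)--(iii) become essentially formal manipulations of unitaries and orthogonal complements.
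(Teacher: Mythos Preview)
The paper does not prove this theorem: it is stated with an explicit citation to Kato \cite[Theorem IV.2.23]{kato2013perturbation} and no proof is given, so there is no in-paper argument to compare against. Your proposal is a reasonable reconstruction of the standard proof and is essentially the approach taken in Kato's book: reduce everything to geometry of closed subspaces, handle (i) directly, and obtain (ii) and (iii) via the unitaries $(x,y)\mapsto(y,x)$ and $(x,y)\mapsto(y,-x)$ together with $\hat\delta(M,N)=\|P_M-P_N\|=\hat\delta(M^\perp,N^\perp)$.

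One point to tighten in your sketch of (i): the phrase ``Neumann-series-type perturbation argument comparing $\pi_1|_{\mathrm{gr}(T_n)}$ to the invertible $\pi_1|_{\mathrm{gr}(T)}$'' is vague because these maps have different domains, so they cannot be subtracted directly. The clean way to execute this is to use the standard fact (Kato, Ch.~IV, \S2) that if $M,N$ are closed subspaces with $\hat\delta(M,N)<1$ then $P_N|_M:M\to N$ is a bounded bijection with $\|(P_N|_M)^{-1}\|\leq (1-\hat\delta(M,N))^{-1}$; composing $\pi_1|_{\mathrm{gr}(T)}\circ (P_{\mathrm{gr}(T)}|_{\mathrm{gr}(T_n)})$ then gives a bijection $\mathrm{gr}(T_n)\to\mathcal{H}_1$ whose inverse you can estimate, yielding $\mathcal{D}(T_n)=\mathcal{H}_1$ and the norm bound on $T_n-T$. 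With that adjustment the argument goes through.
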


We are interested in spectral problems involving operator pencils $z\mapsto T(z)\in\mathcal{C}(\mathcal{H}_1,\mathcal{H}_2)$ that are continuous with respect to $z$. To perform computations, we also suppose that we have an orthonormal basis $\{e_n\}_{n\in\mathbb{N}}$ of $\mathcal{H}_1$ and orthonormal basis $\{\hat{e}_n\}_{n\in\mathbb{N}}$ of $\mathcal{H}_2$. We then define the following class of nonlinear operator pencils, for which we will develop algorithms to compute their spectra.

\begin{definition}[Continuous operator pencils]
Let $U\subset \mathbb{C}$ be a domain (i.e., a non-empty, connected open set). The class of nonlinear operator pencils $\Omega_{\mathrm{NL}}^U$ is the set of maps $T:U\mapsto \mathcal{C}(\mathcal{H}_1,\mathcal{H}_2)$ satisfying the following three properties:
\begin{itemize}
	\item $T$ is continuous when $\mathcal{C}(\mathcal{H}_1,\mathcal{H}_2)$ is equipped with the topology induced by the gap $\hat{\delta}$;
	\item for any $z\in U$, $T(z)$ is densely defined and $\mathrm{span}\{e_n:n\in\mathbb{N}\}$ forms a core of $T(z)$;
	\item for any $z\in U$, $\mathrm{span}\{\hat{e}_n:n\in\mathbb{N}\}$ forms a core of $T(z)^*$.
\end{itemize}
In words, we only assume that these pencils are continuous and that appropriate linear spans of basis vectors form cores of $T(z)$ and $T(z)^*$.
\end{definition}

\begin{remark}[Domain of the pencil]
The domain $\mathcal{D}(T(z))$ need not be constant.
\end{remark}

Here are two simple examples of this setup.

\begin{example}[Linear families]\label{linear_example}
Let $\mathcal{H}_1=\mathcal{H}_2$, $U=\mathbb{C}$, and fix any densely defined $A\in\mathcal{C}(\mathcal{H}_1,\mathcal{H}_2)$ such that $\{e_n\}_{n\in\mathbb{N}}$ forms a core of $A$ and $A^*$. Define $T(z)=A-zI$, then $T\in\Omega_{\mathrm{NL}}^{\mathbb{C}}$.
\end{example}

\begin{example}[Holomorphic families of type (A)]
Suppose that $U\subset \mathbb{C}$ is a domain and $T:U\mapsto \mathcal{C}(\mathcal{H}_1,\mathcal{H}_2)$ is such that $\mathcal{D}(T(z))$ is independent of $z\in U$ and $z\mapsto T(z)u$ is holomorphic for any fixed $u\in\mathcal{D}(T(z))$. Continuity of $T$ is discussed in \cite[page 366]{kato2013perturbation}. If $\mathrm{span}\{e_n:n\in\mathbb{N}\}$ forms a core of $T(z)$ and $\mathrm{span}\{\hat{e}_n:n\in\mathbb{N}\}$ forms a core of $T(z)^*$, then $T\in\Omega_{\mathrm{NL}}^U$.
\end{example}

We now define the set of eigenvalues and the spectrum of a nonlinear pencil. These definitions generalize the linear case in \cref{linear_example}.

\begin{definition}[Eigenvalues]
Let $T\in\Omega_{\mathrm{NL}}^U$. A scalar $\lambda\in U$ is an eigenvalue of $T$ if there exists a non-zero vector $u\in\mathcal{D}(T(\lambda))$ with $T(\lambda)u=0$.
\end{definition}

\begin{definition}[Spectrum]
Let $T\in\Omega_{\mathrm{NL}}^U$. The spectrum of $T$ is the set
$$
\mathrm{Sp}(T)=\{z\in U:T(z)\text{ is not boundedly invertible}\},
$$
and contains (but may strictly contain) the set of eigenvalues.
\end{definition}

\begin{remark}
The spectrum $\mathrm{Sp}(T)$ need not be a closed set of $\mathbb{C}$, but part (ii) of \cref{gap_properties} shows that it is always a relatively closed (possibly empty) subset of $U$.
\end{remark}

To compute $\mathrm{Sp}(T)$, we will compute pseudospectra, defined as follows.

\begin{definition}[Pseudospectra]
Let $T\in\Omega_{\mathrm{NL}}^U$ and $\epsilon>0$. The ($\epsilon$-)pseudospectrum of $T$ is
\begin{equation*}
\spec_{\epsilon}(T)=\mathrm{Cl}_U\left({\left\{z\in U:\left\|T(z)^{-1}\right\|^{-1}<\epsilon\right\}}\right),
\end{equation*}
where we define $\|T(z)^{-1}\|^{-1}=0$ if $z\in\mathrm{Sp}(T)$ and $\mathrm{Cl}_U$ takes the closure relative to $U$.
\end{definition}

The need to include the strict inequality and closure in the definition of $\spec_{\epsilon}(T)$ is discussed in \cite{shargorodsky2008level}, and essentially follows because $\|T(z)^{-1}\|$ can be constant on open subsets, even in the linear case. The following proposition (whose proof is entirely analogous to the linear case) shows that $\spec_{\epsilon}(T)$ can be written in terms of unions of spectra of perturbed pencils.

\begin{proposition}
Let $\epsilon>0$ and $T\in\Omega_{\mathrm{NL}}^U$. Define the following set of perturbations:
$$
\mathcal{C}_U(\epsilon)=\left\{E:U\rightarrow\mathcal{B}(\mathcal{H}_1,\mathcal{H}_2) \text{ continuous with } \sup_{z\in U}\|E(z)\|<\epsilon\right\}.
$$
Then the pseudospectrum $\spec_{\epsilon}(T)$ can be equivalently written as
$$
\spec_{\epsilon}(T)=\mathrm{Cl}_U\left(\bigcup_{E\in\mathcal{C}_U(\epsilon)}\spec(T+E)\right).
$$
\end{proposition}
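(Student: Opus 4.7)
The plan is to prove the claim by establishing the stronger pointwise equality
$\bigcup_{E \in \mathcal{C}_U(\epsilon)} \spec(T+E) = V$, where I write $V := \{z \in U : \|T(z)^{-1}\|^{-1} < \epsilon\}$ for the set whose relative closure defines $\spec_\epsilon(T)$; taking $\mathrm{Cl}_U$ of both sides then yields the proposition. The overall strategy is a direct adaptation of the classical linear pseudospectrum identity, reduced to two pointwise inclusions.

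For $\bigcup_E \spec(T+E) \subseteq V$, I would fix $E \in \mathcal{C}_U(\epsilon)$ and $z_0 \in \spec(T+E)$. If $z_0 \in \spec(T)$ then $\|T(z_0)^{-1}\|^{-1} = 0 < \epsilon$ by the stated convention, so $z_0 \in V$. Otherwise $T(z_0)$ is boundedly invertible, and I would invoke the standard Neumann-series perturbation argument: since $E(z_0) \in \mathcal{B}(\mathcal{H}_1, \mathcal{H}_2)$ and $T(z_0)^{-1} \in \mathcal{B}(\mathcal{H}_2, \mathcal{H}_1)$, the factorization $T(z_0) + E(z_0) = \bigl(I + E(z_0) T(z_0)^{-1}\bigr) T(z_0)$ together with $z_0 \in \spec(T+E)$ forces $\|E(z_0)\| \cdot \|T(z_0)^{-1}\| \geq 1$, whence $\|T(z_0)^{-1}\|^{-1} \leq \|E(z_0)\| < \epsilon$, so $z_0 \in V$.

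For the reverse inclusion $V \subseteq \bigcup_E \spec(T+E)$, I would fix $z_0 \in V$. If $z_0 \in \spec(T)$, take $E \equiv 0$. Otherwise $\|T(z_0)^{-1}\| > 1/\epsilon$, so by the definition of the operator norm there exists a unit vector $v \in \mathcal{H}_2$ with $\|T(z_0)^{-1} v\| > 1/\epsilon$. Setting $u := T(z_0)^{-1} v / \|T(z_0)^{-1} v\| \in \dom(T(z_0))$ produces a unit vector with $\|T(z_0) u\| = 1/\|T(z_0)^{-1} v\| < \epsilon$. I would then define the rank-one bounded operator $E_0 \in \mathcal{B}(\mathcal{H}_1, \mathcal{H}_2)$ by $E_0 x := -\langle x, u \rangle_{\mathcal{H}_1} T(z_0) u$, which satisfies $\|E_0\| = \|T(z_0) u\| < \epsilon$ and $(T(z_0) + E_0) u = 0$. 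Taking $E$ to be the constant map $z \mapsto E_0$ places it in $\mathcal{C}_U(\epsilon)$ and makes $z_0$ an eigenvalue of $T+E$, hence $z_0 \in \spec(T+E)$.

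The main obstacle I anticipate is the requirement that $E$ be continuous on the \emph{entire} $U$ with a uniform norm bound strictly below $\epsilon$, rather than being a perturbation supported only at $z_0$; this is why the constant-map trick above is essential and is the one place the nonlinear setting forces a small deviation from the textbook linear proof. Everything else, including the Neumann-series estimate, works for densely defined closed operators without invoking any gap-topology or generalized-convergence machinery, since $E(z_0) T(z_0)^{-1}$ is an honest bounded operator on $\mathcal{H}_2$.
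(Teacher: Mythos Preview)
Your proof is correct and follows essentially the same approach as the paper: both establish the pointwise equality $\{z\in U:\|T(z)^{-1}\|^{-1}<\epsilon\}=\bigcup_{E\in\mathcal{C}_U(\epsilon)}\spec(T+E)$ via a rank-one constant perturbation for one inclusion and a Neumann-series argument for the other, then take the closure. The only cosmetic differences are that the paper factors as $T(z)(I+T(z)^{-1}E(z))$ rather than $(I+E(z)T(z)^{-1})T(z)$ and uses a slightly different normalization of the rank-one operator; your explicit treatment of the case $z_0\in\spec(T)$ is in fact a shade more careful than the paper's.
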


\begin{proof}
It is enough to show that
$$
{\left\{z\in U:\left\|T(z)^{-1}\right\|^{-1}<\epsilon\right\}}=\bigcup_{E\in\mathcal{C}_U(\epsilon)}\spec(T+E).
$$
Suppose first that $z\notin\spec(T)$ and $\|T(z)^{-1}\|^{-1}<\epsilon$. Then, there exists a vector $v\in\mathcal{H}_2$ of unit norm with $\|T(z)^{-1}v\|_{\mathcal{H}_1}>\epsilon^{-1}$. Let $u=T(z)^{-1}v\in\mathcal{H}_1$ and define the operator $E(z)=E:\mathcal{H}_1\rightarrow\mathcal{H}_2$ by $Ex=-v\langle x, u\rangle_{\mathcal{H}_1}/\|u\|_{\mathcal{H}_1}^2$. Then, $\|E\|=1/\|u\|_{\mathcal{H}_1}<\epsilon$ and $[T(z)+E]u=0$ so $z\in \spec(T+E)$. For the reverse set inclusion, suppose for a contradiction that $z\in\spec(T+E)$ for some $E\in\mathcal{C}_U(\epsilon)$ but that $\|T(z)^{-1}\|^{-1}\geq\epsilon$. Then
$$
\|T(z)^{-1}E(z)\|\leq \|T(z)^{-1}\|\|E(z)\|\leq \|E(z)\|/\epsilon<1.
$$
Note that $T(z)+E(z)=T(z)(I+T(z)^{-1}E(z))$. Using a Neumann series, we have
$$
(I+T(z)^{-1}E(z))^{-1}=\sum_{j=0}^\infty (-1)^j[T(z)^{-1}E(z)]^j,
$$
which converges because $\|T(z)^{-1}E(z)\|<1$. Hence, since $T(z)$ is invertible, so too is the product $T(z)(I+T(z)^{-1}E(z))$. It follows that $z\notin\spec(T+E)$, which is a contradiction.
\end{proof}

\subsection{The metric space and meaning of convergence}\label{sec:metric_def}

To formulate our computational problem, we must specify the metric space $(\mathcal{M},d)$. In our case, we want to compute spectra and pseudospectra, so $\mathcal{M}$ is the set of relatively closed subsets of $U$. Hence, we must define an appropriate metric on $\mathcal{M}$. Convergence of a sequence of approximations $C_n\rightarrow \spec(T)$ with respect to this metric should capture two key features:
\begin{itemize}
	\item \textbf{Avoidance of spectral pollution:} Any accumulation point in $U$ of the sequence of sets $\{C_n\}$ (i.e., any $z\in U$ with $\liminf_{n\rightarrow\infty}\dist(z,C_n)=0$) must belong to $\spec(T)$;
	\item \textbf{Avoidance of spectral invisibility:} If $z\in \spec(T)$, then $\lim_{n\rightarrow\infty}\dist(z,C_n)=0$.
\end{itemize}
To ensure that convergence in $(\mathcal{M},d)$ meets these requirements, particular care is needed when handling the boundary $\partial U$, as illustrated by the following simple example.

\begin{example}
Suppose that $U=\{z\in\mathbb{C}:|z|<1\}$ and $T\in\Omega_{\mathrm{NL}}^U$ with $\spec(T)=\{0\}$. The approximation sets $C_n=\{0,1-1/n\}$ avoid spectral pollution and spectral invisibility. However, the Hausdorff distance between $C_n$ and $\{0\}$ converges to $1$ as $n\rightarrow\infty$.
\end{example}

The reader might initially consider strengthening the avoidance of spectral pollution by taking into account any accumulation points $z\in \cl{U}$. However, although we consider pencils that are continuous with respect to the gap $\hat{\delta}$, they need not be uniformly continuous on the whole of $U$. This flexibility allows us to treat examples such as the one-dimensional problem $T(z)=\sin(1/z)$ for $U=\mathbb{C}\backslash \{0\}$. This issue is also related to numerical approximation, as illustrated by the following example.

\begin{example}
Suppose that $\mathcal{H}_1=\mathcal{H}_2=\mathcal{H}$ and $A$ is a positive self-adjoint operator with compact resolvent (e.g., the reader may think of an appropriate elliptic differential operator). We set $T(z)=A-\frac{1}{z}I$ and $U=\mathbb{C}\backslash \{0\}$. Then
$$
\spec(T)=\{1/z:z\in\spec(A)\backslash\{0\}\},
$$
which has $0$ as an accumulation point. When numerically approximating the spectrum of $A$, or equivalently $\spec(T)$, the approximations are typically accurate for the `small' eigenvalues of $A$ but potentially inaccurate for the larger ones. It is desirable that on any compact set these eigenvalues become increasingly accurate as the discretization improves—in other words, the errors are ``pushed out'' to infinity. For the pencil $T$, this corresponds to the error converging uniformly to zero on compact subsets of $\mathbb{C}\backslash \{0\}$, thus excluding the boundary point $\{0\}$.
\end{example}

A straightforward way to achieve the local uniform convergence described above, thereby avoiding spectral pollution and invisibility, is to equip $U$ with a Riemannian metric.

\begin{definition}
Given a domain $U\subset \mathbb{C}\cong \mathbb{R}^2$ and a point $p\in U$, we define the inner product $g_p:T_pU\times T_pU\rightarrow\mathbb{R}$ by
$$
g_p(u,v)=\frac{\langle u,v \rangle}{\min\{1,[\dist(p,\partial U)]^2\}},
$$
where $\langle u,v \rangle$ denotes the canonical Euclidean metric. We denote the resulting Riemannian metric $\{g_p\}$ by $g_U$ and the induced metric on $U$ by $d_U$.
\end{definition}

\begin{remark}
If $\partial U=\emptyset$ (i.e., $U=\mathbb{C}$), then $g_U$ is the usual Euclidean metric.
\end{remark}

The metric $d_U$ induces the usual topology on $U$. However, the following proposition shows that this distance diverges as we approach the boundary of $U$.

\begin{proposition}\label{balls_separated}
For $x_0\in U$ and $r>0$, define the open ball
$$
S_{r}(x_0)=\{x\in U:d_U(x_0,x)<r\}.
$$
Then $\inf_{z\in S_r(x_0)}\dist(z,\partial U)>0$.
\end{proposition}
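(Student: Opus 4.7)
The plan is to exploit the fact that the conformal factor $1/\min\{1,\dist(\cdot,\partial U)\}$ in the definition of $g_U$ blows up near $\partial U$ at a rate that makes approaching $\partial U$ cost an unbounded amount of $d_U$-length. More precisely, I will show that reaching a point of boundary-distance $d_1>0$ from $x_0$ forces an order $\log(1/d_1)$ lower bound on $d_U(x_0, z)$, so any $z\in S_r(x_0)$ must stay a definite Euclidean distance away from $\partial U$.

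First, I would set $\rho(x)=\dist(x,\partial U)$, which is $1$-Lipschitz with respect to the Euclidean metric, put $d_0=\rho(x_0)>0$, and introduce the auxiliary function
\[
\psi(s)=\int_1^s\frac{du}{\min\{1,u\}}.
\]
Then $\psi$ is continuous and strictly increasing on $(0,\infty)$, equals $\log s$ on $(0,1]$ and $s-1$ on $[1,\infty)$, and its range is all of $\mathbb{R}$. In particular, $\psi$ is a bijection $(0,\infty)\to\mathbb{R}$ with continuous inverse. Note that $\sqrt{\min\{1,\rho^2\}}=\min\{1,\rho\}$, so for any rectifiable path $\gamma:[0,L]\to U$ parameterized by Euclidean arc length, its $d_U$-length is $\int_0^L dt/\min\{1,\rho(\gamma(t))\}$.

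Next, for any such $\gamma$ from $x_0$ to $z\in S_r(x_0)$, the composition $\rho\circ\gamma$ is $1$-Lipschitz, hence absolutely continuous with $|(\rho\circ\gamma)'|\leq 1$ a.e., and by continuity it is bounded away from $0$ on the compact interval $[0,L]$. Thus $\psi\circ\rho\circ\gamma$ is absolutely continuous, and the chain rule combined with the fundamental theorem of calculus gives
\[
|\psi(\rho(z))-\psi(d_0)|=\left|\int_0^L\frac{(\rho\circ\gamma)'(t)}{\min\{1,\rho(\gamma(t))\}}\,dt\right|\leq\int_0^L\frac{dt}{\min\{1,\rho(\gamma(t))\}},
\]
which is exactly the $d_U$-length of $\gamma$. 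Taking the infimum over admissible $\gamma$ yields the key estimate $d_U(x_0,z)\geq|\psi(\rho(z))-\psi(d_0)|$.

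Finally, any $z\in S_r(x_0)$ satisfies $\psi(\rho(z))>\psi(d_0)-r$, so inverting gives $\rho(z)>\psi^{-1}(\psi(d_0)-r)$, a strictly positive lower bound depending only on $x_0$ and $r$, which proves the proposition. The one technical step requiring care is the chain-rule identity for $\psi\circ\rho\circ\gamma$, since $\rho$ is only Lipschitz; but this goes through routinely because $\rho\circ\gamma$ is absolutely continuous and takes values in a compact subinterval of $(0,\infty)$ where $\psi$ is $C^1$. The other subtle point is identifying the Riemannian distance $d_U$ with the path-infimum above, which holds since $g_U$ is a continuous conformal multiple of the Euclidean metric on $U$.
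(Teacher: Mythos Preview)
Your proof is correct and takes a genuinely different route from the paper's. The paper argues by contradiction using a discrete dyadic layer-cake: it slices a neighborhood of $\partial U$ into annuli $V_{2^{-m}}\setminus V_{2^{-(m+1)}}$, observes that any path from $x_0$ to a point with $\dist(\cdot,\partial U)\leq 2^{-n}$ must traverse each annulus with Euclidean length at least $2^{-(m+1)}$ and conformal factor at least $2^m$, so each annulus contributes at least $1/2$ to the $g_U$-length, forcing $d_U(x_0,x_n)\geq (n-N)/2\to\infty$. Your argument is the continuous analogue: instead of summing over dyadic shells you integrate, introducing the antiderivative $\psi$ of the conformal weight and showing directly that $d_U(x_0,z)\geq|\psi(\rho(z))-\psi(\rho(x_0))|$ via the chain rule for absolutely continuous functions. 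Your approach buys an explicit quantitative bound $\rho(z)>\psi^{-1}(\psi(d_0)-r)$ (for instance $\rho(z)>d_0 e^{-r}$ when $d_0\leq 1$), which the paper's argument does not provide; the paper's version is more elementary in that it sidesteps any appeal to absolute continuity and the chain rule for Lipschitz compositions.
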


\begin{proof}
Suppose for a contradiction that this was false for some $x_0\in U$ and $r>0$. Then there exists $x_n\in S_{r}(x_0)$ with $\lim_{n\rightarrow\infty}\dist(x_n,\partial U)=0$. For $c>0$, define the sets
$$
V_c=\{x\in U:\dist(x,\partial U)\leq c\}.
$$
By selecting a subsequence if necessary, we may assume that $x_n\in V_{2^{-n}}$.We may also choose $N$ so that $x_0\not\in V_{2^{-N}}$. Let $n>N$ and consider any piecewise smooth path $h$ from $x_0$ to $x_n$ (note that $U$ is path connected since it is an open connected subset of $\mathbb{C}$). Let $h_m$ denote the part of this path that lies in $V_{2^{-m}}\backslash V_{2^{-(m+1)}}$ for $m=N,\ldots,n-1$. For any $x,y\in U$,
$$
\dist(x,\partial U)-\dist(y,\partial U)\leq \|x-y\|.
$$
Hence, the length of $h_m$ with respect to the usual Euclidean metric is bounded below by $2^{-m}-2^{-(m+1)}=2^{-(m+1)}$. Moreover, the length of $h_m$ with respect to $g_U$ is bounded below by this Euclidean length multiplied by
$$
\inf_{x\in V_{2^{-m}}\backslash V_{2^{-(m+1)}}}\frac{1}{\min\{1,[\dist(x,\partial U)]\}}
\geq \frac{1}{2^{-m}}=2^m.
$$
Hence, the length of $h$ with respect to $g_U$ is bounded below by
$
\sum_{m=N}^{n-1}1/2= ({n-N})/{2}.
$
Taking the infimum over all such paths $h$, we see that $d_U(x_0,x_n)\geq (n-N)/2$. Hence, $x_n\notin S_{r}(x_0)$ for sufficiently large $n$, a contradiction.
\end{proof}

We can now define the metric $d$ on $\mathcal{M}$, the set of relatively closed subsets of $U$. We first define it for non-empty sets.

\begin{definition}[Attouch--Wets metric on non-empty closed sets {\cite[page 79]{beer1993topologies}}]
\label{def:AW_top1}
Let $x_0\in U$ and let $\mathcal{M}_0$ be the set of non-empty closed subsets of $U$. The Attouch--Wets metric (with respect to the metric $d_U$) is
\begin{equation}\label{CHAP1_eq:attouch-wets-metric}
\dAW(X,Y)=\sum_{n=1}^{\infty} 2^{-n}\min\left\{1,\sup_{x\in S_n(x_0)}\left|d_U(x,X){-}d_U(x,Y)\right|\right\},\quad X,Y{\in}\mathcal{M}_0,
\end{equation}
where $d_U(x,X)=\inf_{x'\in X}d_U(x,x')$.
\end{definition}

The following is an intuitive characterization of this topology \cite[Theorem 3.1.7]{beer1993topologies}. Let
$$
e_{U}(X,Y)=\sup_{x\in X} d_U(x,Y),\quad X,Y{\in}\mathcal{M}_0.
$$
For any $\{X_n\}\subset \mathcal{M}_0$ and $X\in \mathcal{M}_0$, $\lim_{n\rightarrow\infty}\dAW(X_n,X)=0$ if and only if
$$
\lim_{n\rightarrow\infty} e_{U}(X\cap S_m(x_0),X_n)
= \lim_{n\rightarrow\infty} e_{U}(X_n\cap S_m(x_0),X) =0
\quad\forall m\in\mathbb{N}.
$$
Hence, convergence with respect to $\dAW$ amounts to locally uniform convergence of subsets of $U$. That is, avoiding spectral pollution and spectral invisibility.

\begin{remark}
In general, $\max\{e_{U}(X\cap S_m(x_0),Y),e_{U}(Y\cap S_m(x_0),X)\}$ need not obey the triangle inequality and, hence, we do not define $\dAW$ directly in terms of $e_U$.
\end{remark}

The above characterization of convergence in the Attouch--Wets sense also allows us to extend convergence to include the empty set. We say that $X_n\in \mathcal{M}$ converges to $\emptyset$ if, for each $m\in\mathbb{N}$, we have $X_n\cap S_m(x_0)=\emptyset$ for sufficiently large $n$.  By a slight abuse of terminology, we refer to the resulting metric space as the Attouch--Wets metric space $(\MAW,\dAW)$, accommodating the empty set.

\subsection{The evaluation set}\label{grid_def}

The final ingredient needed to define our computational problem is the evaluation set, i.e., the information that we allow our algorithms access to. Given a domain $U\subset \mathbb{C}$, we let $\{G_n\}_{n\in\mathbb{N}}$ be a sequence of sets that satisfy the following:
\begin{itemize}
\item Each set $G_n$ is a finite subset of $U$;
	\item Any $z\in G_n$ is complex rational, i.e., $\mathrm{Re}(z)\in\mathbb{Q}$ and $\mathrm{Im}(z)\in\mathbb{Q}$;
	\item $\lim_{n\rightarrow\infty}\dist(z,G_n)=0$ for any $z\in U$.
\end{itemize}
One may think of these sets as a grid of rational points that densely fills $U$ in the limit $n\rightarrow\infty$. For $T\in\Omega_{\mathrm{NL}}^U$, we then define the following two sets of evaluation functions:
\begin{align*}
\Lambda_1&=\{T\mapsto \langle T(z)e_j,\hat{e}_i \rangle_{\mathcal{H}_2}:z\in G_n\text{ and }i,j,n \in\mathbb{N} \}\\
\Lambda_2&=\Lambda_1\cup\{T\mapsto \langle T(z)e_j, T(z)e_i \rangle_{\mathcal{H}_2},
T\mapsto \langle T(z)^*\hat{e}_j,T(z)^*\hat{e}_i \rangle_{\mathcal{H}_1}
:z\in G_n\text{ and }i,j,n \in\mathbb{N} \}.
\end{align*}
The first evaluation set means that we have access to the matrix representation of $T(z)$ for any $z\in\cup_n G_n$. One can think of the second evaluation set as appending this information with access to the matrix elements of $T(z)^*T(z)$ and $T(z)T(z)^*$.\\

We can now precisely state our computational goal:
\begin{center}
\noindent\textbf{Goal:} Our goal is to solve $\{\spec,\Omega_{\mathrm{NL}}^U,\MAW,\Lambda_j\}$ and $\{\spec_\epsilon,\Omega_{\mathrm{NL}}^U,\MAW,\Lambda_j\}$ for $j=1,2$.
\end{center}

\section{Convergent universal algorithms}
\label{sec:convergent_algs}

We now construct our algorithms, establish their convergence properties, and prove that they yield optimal classifications within the SCI hierarchy.

\subsection{Nonlinear injection moduli}

We shall use injection moduli to compute spectra.

\begin{definition}
The injection modulus of $A\in\mathcal{C}(\mathcal{H}_1,\mathcal{H}_2)$ is defined as
$$
\sigma_{\mathrm{inf}}(A)=\inf\{\left\|Ax\right\|:x\in\dom(A),\left\|x\right\|=1\}=\sup\{\tau\geq 0:\left\|Ax\right\|\geq \tau \|x\|\,\forall x\in\dom(A)\}.
$$
\end{definition}

The following perturbation bound establishes the continuity of $\sigma_{\inf}$ on $\mathcal{C}(\mathcal{H}_1,\mathcal{H}_2)$ with respect to the topology induced by $\hat{\delta}$.

\begin{proposition}\label{prop_sigma_inf_cts}[Perturbation bound for nonlinear injection moduli]
Suppose that $A,B\in\mathcal{C}(\mathcal{H}_1,\mathcal{H}_2)$ with
$
\hat{\delta}(A,B)\sqrt{1+[\sigma_{\mathrm{\inf}}(A)]^2}<1.
$
Then
$$
\sigma_{\mathrm{\inf}}(B)\leq\sigma_{\mathrm{\inf}}(A)+\frac{2\hat{\delta}(A,B)\sqrt{1+[\sigma_{\mathrm{\inf}}(A)]^2}}{1-\hat{\delta}(A,B)\sqrt{1+[\sigma_{\mathrm{\inf}}(A)]^2}}.
$$
In particular, if $\lim_{n\rightarrow\infty}\hat{\delta}(A_n,A)=0$, then $\lim_{n\rightarrow\infty}\sigma_{\mathrm{\inf}}(A_n)=\sigma_{\mathrm{\inf}}(A)$.
\end{proposition}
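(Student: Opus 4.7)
Set $s = \sigma_{\inf}(A)$, $d = \hat{\delta}(A,B)$, and $M = \sqrt{1+s^2}$, with the hypothesis $dM < 1$. My plan is to produce a near-optimal test vector for $\sigma_{\inf}(B)$ by transporting a near-optimal one for $\sigma_{\inf}(A)$ across the small gap between the graphs, and then to bound $\|By\|/\|y\|$ from above by explicit triangle inequalities in $\mathcal{H}_1\oplus\mathcal{H}_2$.

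First, for any $\epsilon>0$, I would pick $x \in \mathcal{D}(A)$ with $\|x\|_{\mathcal{H}_1}=1$ and $\|Ax\|_{\mathcal{H}_2}\leq s+\epsilon$ (possible by the definition of $\sigma_{\inf}$). Setting $N:=\sqrt{1+\|Ax\|^2}$, the vector $u:=N^{-1}(x,Ax)$ is a unit vector in $\mathrm{gr}(A)\subset \mathcal{H}_1\oplus\mathcal{H}_2$. Since $\delta(\mathrm{gr}(A),\mathrm{gr}(B))\leq \hat{\delta}(A,B)=d$, there exists $(y,By)\in\mathrm{gr}(B)$ with $\|u-(y,By)\|\leq d+\epsilon$. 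Reading off the two orthogonal components and applying the triangle inequality,
\begin{equation*}
\|y\|_{\mathcal{H}_1} \ \geq\ \tfrac{1}{N}-(d+\epsilon),\qquad \|By\|_{\mathcal{H}_2}\ \leq\ \tfrac{\|Ax\|}{N}+(d+\epsilon).
\end{equation*}
Because $dM<1$ and $N\to M$, $\|Ax\|\to s$ as $\epsilon\to 0$, shrinking $\epsilon$ gives $N(d+\epsilon)<1$ and in particular $y\neq 0$. Therefore $y$ is an admissible test vector for $\sigma_{\inf}(B)$, so
\begin{equation*}
\sigma_{\inf}(B)\ \leq\ \frac{\|By\|}{\|y\|}\ \leq\ \frac{\|Ax\|+N(d+\epsilon)}{1-N(d+\epsilon)}.
\end{equation*}
Sending $\epsilon\to 0$ and rearranging the resulting rational expression in $s$, $M$, $d$ yields the stated bound after splitting $\tfrac{s+Md}{1-Md} = s + \tfrac{Md(1+s)}{1-Md}$ and using $1+s\leq 2\sqrt{1+s^2}=2M$-type estimates to pass to the universal constant.

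For the limit statement, the perturbation bound shows $\limsup_n \sigma_{\inf}(A_n)\leq s$ whenever $\hat{\delta}(A_n,A)\to 0$. Since $\hat{\delta}$ is symmetric and the perturbation bound forces $\{\sigma_{\inf}(A_n)\}$ to stay bounded, we have $\sqrt{1+\sigma_{\inf}(A_n)^2}\,\hat{\delta}(A_n,A)<1$ eventually; swapping the roles of $A$ and $A_n$ in the same estimate then gives $\liminf_n \sigma_{\inf}(A_n)\geq s$, and hence convergence.

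The step I expect to be most delicate is not the transport itself, but the final algebraic rearrangement: the raw bound the triangle-inequality argument produces has the form $\tfrac{\|Ax\|+N(d+\epsilon)}{1-N(d+\epsilon)}$, and coaxing this into exactly the stated form $s+\tfrac{2Md}{1-Md}$ requires a careful limiting argument in $\epsilon$ together with a deliberate, slightly lossy rearrangement. Avoiding a suboptimal constant while preserving the usable structure of the bound is the main obstacle.
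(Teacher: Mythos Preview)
Your approach is identical to the paper's: pick a near-minimizer $x$ for $\sigma_{\inf}(A)$, normalize $(x,Ax)$ to a unit vector in $\mathrm{gr}(A)$, use the gap to find a nearby $(y,By)\in\mathrm{gr}(B)$, and bound $\|By\|/\|y\|$ componentwise. Both you and the paper arrive at the same raw estimate $\sigma_{\inf}(B)\leq \frac{s+Md}{1-Md}$ after sending $\epsilon\to 0$, and your treatment of the limit statement (limsup from the bound, then swap the roles of $A$ and $A_n$ once boundedness of $\sigma_{\inf}(A_n)$ is secured) matches the paper exactly.

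Your instinct that the final rearrangement is the delicate step is well founded, though not quite in the way you anticipate. The paper does not perform any lossy estimate there: it simply asserts $\frac{s+Md}{1-Md}=s+\frac{2Md}{1-Md}$. This is not actually an identity; the honest rearrangement is $s+\frac{Md(1+s)}{1-Md}$ (take $s=2$ to see the two expressions differ), so the displayed constant in the proposition appears to be a slip. Your proposed route via $1+s\leq 2\sqrt{1+s^2}=2M$ is a true inequality, but it produces $s+\frac{2M^2d}{1-Md}$, with an extra factor of $M$, so it does not land on the printed form either. None of this affects the substance: the only use made of the bound is the continuity conclusion, which needs merely \emph{some} upper bound tending to $s$ as $d\to 0$, and the raw inequality $\sigma_{\inf}(B)\leq \frac{s+Md}{1-Md}$ already delivers that.
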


\begin{proof}
Let $\epsilon>0$. Then there exists $x\in\dom(A)$ with $\|x\|=1$ and $\|Ax\|\leq \sigma_{\mathrm{\inf}}(A)+\epsilon$. Let $\phi = (x,Ax)/\sqrt{1+\|Ax\|^2}\in\mathrm{gr}(A)$, then there exists $\psi=(y,By)\in\mathrm{gr}(B)$ with $\|\phi-\psi\|\leq \hat{\delta}(A,B)+\epsilon$. Hence,
$$
\|By\|\leq \|Ax\|/\sqrt{1+\|Ax\|^2}+\hat{\delta}(A,B)+\epsilon,\quad \|y\|\geq \|x\|/\sqrt{1+\|Ax\|^2}-\hat{\delta}(A,B)-\epsilon.
$$
It follows that
$$
\sigma_{\mathrm{\inf}}(B)\leq \frac{\|Ax\|/\sqrt{1+\|Ax\|^2}+\hat{\delta}(A,B)+\epsilon}{1/\sqrt{1+\|Ax\|^2}-\hat{\delta}(A,B)-\epsilon}\leq \frac{\sigma_{\mathrm{\inf}}(A)/\sqrt{1+[\sigma_{\mathrm{\inf}}(A)]^2}+\hat{\delta}(A,B)+2\epsilon}{1/\sqrt{1+[\sigma_{\mathrm{\inf}}(A)+\epsilon]^2}-\hat{\delta}(A,B)-\epsilon}.
$$
Taking $\epsilon\downarrow 0$, we obtain
$$
\sigma_{\mathrm{\inf}}(B)\leq \frac{\sigma_{\mathrm{\inf}}(A)+\hat{\delta}(A,B)\sqrt{1+[\sigma_{\mathrm{\inf}}(A)]^2}}{1-\hat{\delta}(A,B)\sqrt{1+[\sigma_{\mathrm{\inf}}(A)]^2}}=\sigma_{\mathrm{\inf}}(A)+\frac{2\hat{\delta}(A,B)\sqrt{1+[\sigma_{\mathrm{\inf}}(A)]^2}}{1-\hat{\delta}(A,B)\sqrt{1+[\sigma_{\mathrm{\inf}}(A)]^2}}.
$$
Now suppose that $\lim_{n\rightarrow\infty}\hat{\delta}(A_n,A)=0$. Taking $B=A_n$, the above bound shows that
$
\limsup_{n\rightarrow\infty}\sigma_{\mathrm{\inf}}(A_n)\leq  \sigma_{\mathrm{\inf}}(A).
$
It follows that $\hat{\delta}(A_n,A)\sqrt{1+[\sigma_{\mathrm{\inf}}(A_n)]^2}<1$ for sufficiently large $n$, and, hence, we may reverse the roles of $A$ and $A_n$ to see that
$$
\sigma_{\mathrm{\inf}}(A)\leq\sigma_{\mathrm{\inf}}(A_n)+\frac{2\hat{\delta}(A_n,A)\sqrt{1+[\sigma_{\mathrm{\inf}}(A_n)]^2}}{1-\hat{\delta}(A_n,A)\sqrt{1+[\sigma_{\mathrm{\inf}}(A_n)]^2}}.
$$
It follows that $\liminf_{n\rightarrow\infty}\sigma_{\mathrm{\inf}}(A_n)\geq  \sigma_{\mathrm{\inf}}(A)$.
\end{proof}

\begin{remark}[Discontinuity of $\sigma_{\mathrm{\inf}}$ under weaker assumptions of continuity of $z\mapsto T(z)$]
The gap $\hat{\delta}$ between closed operators and generalized convergence extends the notion of convergence for bounded operators in the operator norm. A natural question is whether $\sigma_{\mathrm{\inf}}$ remains continuous under weaker assumptions on the continuity of $z\mapsto T(z)$. However, the following example shows that continuity of $z\mapsto T(z)$ with respect to the strong operator topology alone does not guarantee continuity of $\sigma_{\mathrm{\inf}}$.
\end{remark}

\begin{example}
\label{exam_weaker_false}
Let $\mathcal{H}_1=\mathcal{H}_2=L^2(\mathbb{R})$, $U=\mathbb{C}$ and $\phi$ be a smooth, non-negative, compactly supported bump function on $\mathbb{R}$ taking maximum value 1. Set $T(z)$ as multiplication by $1-\phi(\cdot -1/|z|)$ and $T(0)=I$. Let $\{z_n\}\subset \mathbb{C}$ with $\lim_{n\rightarrow\infty}z_n=z\in\mathbb{C}$. Then $T(z_n)$ converges strongly to $T(z)$ as $n\rightarrow\infty$ by the dominated convergence theorem. If we take $z=0$ and $z_n\neq 0$, $\sigma_{\mathrm{inf}}(T(z_n))=0$ but $\sigma_{\mathrm{inf}}(T(z))=1$.
\end{example}

The following simple result relates injection moduli to the norms of inverses. Although this connection is well-known as a ``folklore'' result, we include a proof for completeness.

\begin{lemma}[Injection moduli and norms of inverses]\label{lemma:injection1}
Let $A\in\mathcal{C}(\mathcal{H}_1,\mathcal{H}_2)$ be densely defined. If $A$ is boundedly invertible, then
$
\sigma_{\mathrm{inf}}(A)=\sigma_{\mathrm{inf}}(A^*)=\|A^{-1}\|^{-1}.
$
If $A$ is not boundedly invertible, then
$
\min\{\sigma_{\mathrm{inf}}(A),\sigma_{\mathrm{inf}}(A^*)\}=0.
$
\end{lemma}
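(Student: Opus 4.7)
The plan is to dispatch the two cases separately using only the definitions together with standard consequences of $A$ being closed and densely defined.

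First I would handle the boundedly invertible case. If $A^{-1}\in\mathcal{B}(\mathcal{H}_2,\mathcal{H}_1)$ exists, then $A:\dom(A)\to\mathcal{H}_2$ is a bijection. Writing $y=Ax$ (so $x=A^{-1}y$ ranges over $\mathcal{H}_1$ as $y$ ranges over $\mathcal{H}_2$) and rearranging the quotient in the definition gives
\[
\sigma_{\mathrm{inf}}(A)=\inf_{x\in\dom(A),\,x\neq 0}\frac{\|Ax\|}{\|x\|}=\inf_{y\neq 0}\frac{\|y\|}{\|A^{-1}y\|}=\|A^{-1}\|^{-1}.
\]
For the adjoint, I would invoke the standard fact (valid because $A$ is closed and densely defined) that $A^*$ is then boundedly invertible with $(A^*)^{-1}=(A^{-1})^*$, hence $\|(A^*)^{-1}\|=\|A^{-1}\|$, and apply the same quotient manipulation to $A^*$.

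For the second case I would argue the contrapositive: assuming $\sigma_{\mathrm{inf}}(A)>0$ and $\sigma_{\mathrm{inf}}(A^*)>0$, I would show $A$ is boundedly invertible. The positivity $\sigma_{\mathrm{inf}}(A)\geq c>0$ immediately gives injectivity, and combined with closedness of $A$ it yields closed range: if $Ax_n\to y$, then $\{Ax_n\}$ is Cauchy, so $\{x_n\}$ is Cauchy by the lower bound $\|x_n-x_m\|\leq c^{-1}\|Ax_n-Ax_m\|$; passing to the limit $x_n\to x$ and using that $A$ is closed gives $x\in\dom(A)$ with $Ax=y$. Next, $\sigma_{\mathrm{inf}}(A^*)>0$ forces $\ker(A^*)=\{0\}$, and the standard identity $\mathrm{ran}(A)^{\perp}=\ker(A^*)$ (for densely defined $A$) shows $\mathrm{ran}(A)$ is dense. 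Combined with the closed range property, $\mathrm{ran}(A)=\mathcal{H}_2$, so $A$ is bijective with $\|A^{-1}\|\leq c^{-1}$, i.e., boundedly invertible.

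There is no real obstacle here; everything reduces to textbook facts about closed, densely defined operators on Hilbert spaces. The only point that requires a moment of care is the closed-range step, where one must invoke the closedness of $A$ (not merely boundedness from below) to conclude the limit lies in $\dom(A)$. The rest is bookkeeping with the characterization $\mathrm{ran}(A)^{\perp}=\ker(A^*)$ and the elementary change of variables $y=Ax$ in the infimum defining $\sigma_{\mathrm{inf}}$.
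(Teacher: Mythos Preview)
Your proposal is correct and follows essentially the same strategy as the paper: both handle the invertible case by relating $\sigma_{\mathrm{inf}}(A)$ to $\|A^{-1}\|^{-1}$ via the substitution $y=Ax$ (the paper splits this into two inequalities, but the content is identical), and both prove the second case by contrapositive using $\ran(A)^{\perp}=\ker(A^*)$ together with the closedness of $A$. The only minor mechanical difference is that in the second part you show $\ran(A)$ is closed directly via a Cauchy-sequence argument, whereas the paper instead extends the bounded operator $A^{-1}:\ran(A)\to\mathcal{H}_1$ by continuity to all of $\mathcal{H}_2$ and then invokes closedness of $A$ to verify $AA^{-1}=I$; both are standard and equally short.
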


\begin{proof}
Suppose first that $A$ is boundedly invertible. We prove that $\sigma_{\mathrm{inf}}(A)=\|A^{-1}\|^{-1}$ and the other case is similar using the fact that $\|A^{-1}\|=\|(A^*)^{-1}\|$. Let $x\in\dom(A)$ with $\|x\|=1$, then
$$
1=\|x\|=\left\|A^{-1}Ax\right\|\leq\left\|A^{-1}\right\|\left\|Ax\right\|.
$$
Upon taking the infimum over $x$, $\sigma_{\mathrm{inf}}(A)\geq \|A^{-1}\|^{-1}$. Conversely, let $x_n\in \mathcal{H}_2$ such that $\|x_n\|=1$ and $\|A^{-1}x_n\|\rightarrow \|A^{-1}\|$. Then
$$
1=\|x_n\|=\left\|AA^{-1}x_n\right\|\geq\sigma_{\mathrm{inf}}(A)\left\|A^{-1}x_n\right\|.
$$
Letting $n\rightarrow\infty$, we obtain $\sigma_{\mathrm{inf}}(A)\leq \|A^{-1}\|^{-1}$.

Suppose that $A$ is not boundedly invertible but, for a contradiction,
$
\min\{\sigma_{\mathrm{inf}}(A),\sigma_{\mathrm{inf}}(A^*)\}>0.
$
It follows that $\ker(A^*)=\{0\}$. Consequently, the range of $A$, $\ran(A)$, is dense in $\mathcal{H}_2$ since $\ran(A)^{\perp}=\ker(A^*)$. Furthermore, $A$ is injective on its domain. Therefore, we can define $A^{-1}$ on the dense subspace $\ran(A)$. This operator is bounded since $\sigma_{\mathrm{inf}}(A) > 0$, allowing us to extend it to a bounded operator on the whole of $\mathcal{H}_2$. The closedness of $A$ implies that $AA^{-1}=I$. Clearly, $A^{-1}Ax=x$ for all $x\in\dom(A)$. Hence, $A$ is boundedly invertible, a contradiction.
\end{proof}

For $T\in\Omega_{\mathrm{NL}}^U$, we therefore define the function
$$
\gamma(z,T)=\min\left\{\sigma_{\mathrm{inf}}(T(z)),\sigma_{\mathrm{inf}}(T(z)^*)\right\}.
$$
An immediate consequence of \cref{lemma:injection1} is that
\begin{equation}
\label{CHAP1_eq:spectra_char}
\spec(T)=\{z\in U:\gamma(z,T)=0\},\quad\spec_{\epsilon}(T)=\mathrm{Cl}_U(\{z\in U:\gamma(z,T)<\epsilon\}).
\end{equation}
Moreover, \cref{prop_sigma_inf_cts} and the fact that $\hat{\delta}(S^*,T^*)=\hat{\delta}(S,T)$ for densely defined $S,T$ show that the map $z\mapsto\gamma(z,T)=\|T(z)^{-1}\|^{-1}$ is continuous.

\subsection{Computing nonlinear injection moduli}

To computationally exploit \cref{CHAP1_eq:spectra_char}, we must approximate the function $\gamma$. For any $n\in\mathbb{N}$, we let $\mathcal{P}_{n}$ denote the orthogonal projection onto $\mathrm{span}\{e_1,\ldots,e_n\}$ (with domain $\mathcal{H}_1$) and $\mathcal{Q}_{n}$ denote the orthogonal projection onto $\mathrm{span}\{\hat{e}_1,\ldots,\hat{e}_n\}$ (with domain $\mathcal{H}_2$). For $n_1,n_2\in\mathbb{N}$, let
\begin{align}
\gamma_{n_2,n_1}(z,T)&=\min\left\{\sigma_{\mathrm{inf}}(\mathcal{Q}_{n_1}T(z)\mathcal{P}_{n_2}^*),\sigma_{\mathrm{inf}}(\mathcal{P}_{n_1}T(z)^*\mathcal{Q}_{n_2}^*)\right\},\label{CHAP1_eq:gamman1n2a}\\
\gamma_{n_2}(z,T)&=\min\left\{\sigma_{\mathrm{inf}}(T(z)\mathcal{P}_{n_2}^*),\sigma_{\mathrm{inf}}(T(z)^*\mathcal{Q}_{n_2}^*)\right\}.\label{CHAP1_eq:gamman1n2b}
\end{align}
We will use the following monotonicity properties of $\gamma_{n_2,n_1}$ and $\gamma_{n_2}$. The proof of the following lemma is almost identical to the linear case, but, again, we include it for completeness.

\begin{lemma}[Monotonicity of injection moduli]
\label{CHAP1_lem:monotonicity_gamma}\index{injection modulus!monotonicity}
For any $T\in\Omega_{\mathrm{NL}}^U$ and $z\in U$, the following hold:
\begin{itemize}[leftmargin=0.7cm]
\item[\rm(i)] $\lim_{n_1\rightarrow\infty}\gamma_{n_2,n_1}(z,T)=\gamma_{n_2}(z,T)$, where the convergence is monotonic from below;
	\item[\rm(ii)] $\lim_{n_2\rightarrow\infty}\gamma_{n_2}(z,T)=\gamma(z,T)$, where the convergence is monotonic from above.
\end{itemize}
\end{lemma}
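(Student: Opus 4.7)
The plan is to handle each of the two terms in the minimum separately, then combine them using the elementary fact that for monotone sequences $a_n,b_n$ converging in the same direction to $a,b$, the sequence $\min(a_n,b_n)$ converges monotonically in the same direction to $\min(a,b)$. The hypotheses defining $\Omega_{\mathrm{NL}}^U$ are symmetric under the swap $(T(z),\{e_n\},\mathcal{P}_n)\leftrightarrow(T(z)^*,\{\hat{e}_n\},\mathcal{Q}_n)$ (the core assumption on $\mathrm{span}\{\hat{e}_n\}$ for $T(z)^*$ is imposed precisely so this symmetry holds), so it suffices to establish
$$
\sigma_{\mathrm{inf}}(\mathcal{Q}_{n_1}T(z)\mathcal{P}_{n_2}^*)\;\nearrow\;\sigma_{\mathrm{inf}}(T(z)\mathcal{P}_{n_2}^*)\ \text{ as }n_1\to\infty,\qquad
\sigma_{\mathrm{inf}}(T(z)\mathcal{P}_{n_2}^*)\;\searrow\;\sigma_{\mathrm{inf}}(T(z))\ \text{ as }n_2\to\infty.
$$

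For part (i), I identify $\mathcal{P}_{n_2}^*$ with the isometric inclusion of $\mathbb{C}^{n_2}$ onto $V_{n_2}:=\mathrm{span}\{e_1,\ldots,e_{n_2}\}\subset\mathcal{H}_1$, so that $\sigma_{\mathrm{inf}}(\mathcal{Q}_{n_1}T(z)\mathcal{P}_{n_2}^*)$ equals the infimum of $f_{n_1}(y):=\|\mathcal{Q}_{n_1}T(z)y\|$ over the unit sphere of $V_{n_2}$. Nestedness of the projections $\mathcal{Q}_{n_1}^*\mathcal{Q}_{n_1}$ makes $f_{n_1}$ pointwise non-decreasing in $n_1$, while Parseval's identity gives pointwise convergence $f_{n_1}(y)\to f(y):=\|T(z)y\|$. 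Because $V_{n_2}$ is finite-dimensional, its unit sphere is compact and both $f_{n_1}$ and $f$ are continuous on it (since $T(z)$ restricted to $V_{n_2}\subset\mathcal{D}(T(z))$ is automatically bounded). Dini's theorem then upgrades this to uniform convergence, so the infima converge; monotonicity from below is inherited from that of the $f_{n_1}$.

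For part (ii), monotonicity from above is immediate because $V_{n_2}\subset V_{n_2+1}$ only enlarges the set over which the infimum is taken, and the bound $\sigma_{\mathrm{inf}}(T(z)\mathcal{P}_{n_2}^*)\geq\sigma_{\mathrm{inf}}(T(z))$ is trivial. For the matching upper bound on the limit, fix $\epsilon>0$ and choose $x\in\mathcal{D}(T(z))$ with $\|x\|=1$ and $\|T(z)x\|<\sigma_{\mathrm{inf}}(T(z))+\epsilon$. Since $\mathrm{span}\{e_n\}$ is a core of $T(z)$, there exist $x_k\in V_{m_k}$ with $(x_k,T(z)x_k)\to(x,T(z)x)$ in $\mathcal{H}_1\oplus\mathcal{H}_2$. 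For large $k$ we then have $\|T(z)x_k\|/\|x_k\|<\sigma_{\mathrm{inf}}(T(z))+2\epsilon$, hence $\sigma_{\mathrm{inf}}(T(z)\mathcal{P}_{m_k}^*)<\sigma_{\mathrm{inf}}(T(z))+2\epsilon$; letting $\epsilon\downarrow 0$ and using the established monotonicity closes the argument.

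The main obstacle is the Dini step in part (i): convergence of infima does not follow from pointwise monotone convergence in general, and the argument genuinely relies on the compactness of the unit sphere of $V_{n_2}$—precisely the reason the level-$1$ index $n_1$ must be separated from $n_2$ and cannot be collapsed. Part (ii) then rests on the core hypothesis baked into $\Omega_{\mathrm{NL}}^U$, which is exactly the device that lets one approximate arbitrary elements of $\mathcal{D}(T(z))$ by finite-section vectors while simultaneously controlling the $T(z)$-image; the adjoint-side limits in both (i) and (ii) are handled identically using the parallel hypothesis on $\mathrm{span}\{\hat{e}_n\}$ for $T(z)^*$.
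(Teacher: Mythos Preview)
Your proof is correct and tracks the paper's closely. Part~(ii) is essentially identical: monotonicity from the nested domains $V_{n_2}\subset V_{n_2+1}$, and the core hypothesis to push the $\limsup$ down via a finite-section approximant.

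In part~(i) there is a minor technical difference worth noting. Rather than invoking Dini's theorem on the compact unit sphere of $V_{n_2}$, the paper observes directly that
\[
\|\mathcal{Q}_{n_1}^*\mathcal{Q}_{n_1}T(z)\mathcal{P}_{n_2}^* - T(z)\mathcal{P}_{n_2}^*\|\to 0
\]
(which holds because $T(z)\mathcal{P}_{n_2}^*$ has finite-dimensional range, so the tail projection $I-\mathcal{Q}_{n_1}^*\mathcal{Q}_{n_1}$ restricted to that range tends to zero in norm), and then appeals to continuity of $\sigma_{\mathrm{inf}}$ in the operator-norm topology on bounded operators. Both routes exploit exactly the same finite-dimensionality of $V_{n_2}$; your Dini formulation makes the role of compactness more explicit, while the paper's operator-norm version is marginally more direct. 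Neither buys anything the other does not.
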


\begin{proof}
To prove (i), consider the functions $z\mapsto \sigma_{\mathrm{inf}}(\mathcal{Q}_{n_1}T(z)\mathcal{P}_{n_2}^*)$. As $n_1$ increases, the range of $\mathcal{Q}_{n_1}T(z)\mathcal{P}_{n_2}^*$ increases and, hence, its injection modulus cannot decrease. We also have $\sigma_{\mathrm{inf}}(\mathcal{Q}_{n_1}T(z)\mathcal{P}_{n_2}^*)\leq \sigma_{\mathrm{inf}}(T(z)\mathcal{P}_{n_2}^*)$. Fix $z\in\mathbb{C}$. Then
$$
\lim_{n_1\rightarrow\infty}\|\mathcal{Q}_{n_1}^*\mathcal{Q}_{n_1}T(z)\mathcal{P}_{n_2}^*-T(z)\mathcal{P}_{n_2}^*\|=0.
$$
Since $\sigma_{\mathrm{inf}}$ is continuous with respect to the operator norm topology when restricted to bounded operators, the convergence follows. Arguing similarly for $\sigma_{\mathrm{inf}}(\mathcal{P}_{n_1}T(z)^*\mathcal{Q}_{n_2}^*)$, part (i) follows.

For part (ii), we use the fact that the domains of $T(z)\mathcal{P}_{n_2}^*$ and $T(z)^*\mathcal{Q}_{n_2}^*$ are increasing in $n_2$ and, hence, $\gamma_{n_2}(z,T)$ is decreasing. Fix $z\in\mathbb{C}$ and $\epsilon>0$. We may choose $x\in\mathcal{D}(T(z))$ of unit norm with $\|T(z)x\|\leq \sigma_{\mathrm{inf}}(T(z))+\epsilon$. Since $\mathrm{span}\{e_n:n\in\mathbb{N}\}$ forms a core of $T(z)$, we may choose $x'\in\mathrm{span}\{e_n:n\in\mathbb{N}\}$ of unit norm such that $\|T(z)(x-x')\|\leq \epsilon$. Since $x'\in\mathrm{span}\{e_1,\ldots,e_{n_2}\}$ for sufficiently large $n_2$, it follows that
$$
\smash{\limsup_{n_2\rightarrow\infty} \sigma_{\mathrm{inf}}(T(z)\mathcal{P}_{n_2}^*)}\leq \|T(z)x'\|\leq\|T(z)x\|+\|T(z)(x-x')\|\leq\sigma_{\mathrm{inf}}(T(z))+2\epsilon.
$$
Since $\epsilon>0$ was arbitrary, $\lim_{n_2\rightarrow\infty} \sigma_{\mathrm{inf}}(T(z)\mathcal{P}_{n_2}^*)=\sigma_{\mathrm{inf}}(T(z))$. We can argue similarly for $\sigma_{\mathrm{inf}}(T(z)^*\mathcal{Q}_{n_2}^*)$, and, hence, part (ii) follows.
\end{proof}

We now have the following proposition, which provides a route to computing $\gamma$.

\begin{proposition}\label{approx_gamma}
Let $U\subset\mathbb{C}$ be a domain and $z\in \cup_{n}G_n$, where $\{G_n\}$ is a set of grids satisfying the properties outlined in \cref{grid_def}. For any $\epsilon>0$, the following hold:
\begin{itemize}
	\item For any input $T\in\Omega_{\mathrm{NL}}^U$, we can compute an $\epsilon$-accurate approximation of $\gamma_{n_2,n_1}(z,T)$ using $\Lambda_1$ and finitely many arithmetic operations and comparisons;
	\item For any input $T\in\Omega_{\mathrm{NL}}^U$, we can compute an $\epsilon$-accurate approximation of $\gamma_{n_2}(z,T)$ using $\Lambda_2$ and finitely many arithmetic operations and comparisons
\end{itemize}
\end{proposition}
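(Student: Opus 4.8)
The plan is to reduce both claims to a single subroutine: approximating the smallest eigenvalue of an explicit finite Hermitian positive semidefinite matrix, whose entries are directly available from the evaluation set, to any prescribed accuracy using finitely many arithmetic operations and comparisons.

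First I would record the relevant linear algebra. Since $\mathrm{span}\{e_n\}$ is a core of $T(z)$ and $\mathrm{span}\{\hat e_n\}$ is a core of $T(z)^*$, we have $e_1,\dots,e_{n_2}\in\mathcal{D}(T(z))$ and $\hat e_1,\dots,\hat e_{n_2}\in\mathcal{D}(T(z)^*)$, so the operators appearing in \cref{CHAP1_eq:gamman1n2a} and \cref{CHAP1_eq:gamman1n2b} are well defined with finite-dimensional domains. Writing a generic vector $x=\sum_{j\le n_2}c_je_j$ and using $\|Mc\|^2=c^*M^*Mc$, the operator $\mathcal{Q}_{n_1}T(z)\mathcal{P}_{n_2}^*$ is unitarily identified with the $n_1\times n_2$ matrix $M$ given by $M_{ij}=\langle T(z)e_j,\hat e_i\rangle$, so that $\sigma_{\mathrm{inf}}(\mathcal{Q}_{n_1}T(z)\mathcal{P}_{n_2}^*)=\sqrt{\lambda_{\min}(M^*M)}$; here the entries of $M$ belong to $\Lambda_1$ (because $z\in\cup_nG_n$), and $M^*M$ is formed from them with finitely many arithmetic operations. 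Similarly $\mathcal{P}_{n_1}T(z)^*\mathcal{Q}_{n_2}^*$ corresponds to the matrix with entries $\langle T(z)^*\hat e_j,e_i\rangle=\overline{\langle T(z)e_i,\hat e_j\rangle}$, again available from $\Lambda_1$. This handles the reduction of $\gamma_{n_2,n_1}$ to $\Lambda_1$-computations. For $\gamma_{n_2}$, the Gram-matrix identity $\|T(z)x\|^2=c^*Gc$ with $G_{ij}=\langle T(z)e_j,T(z)e_i\rangle$ gives $\sigma_{\mathrm{inf}}(T(z)\mathcal{P}_{n_2}^*)=\sqrt{\lambda_{\min}(G)}$, and likewise $\sigma_{\mathrm{inf}}(T(z)^*\mathcal{Q}_{n_2}^*)=\sqrt{\lambda_{\min}(H)}$ with $H_{ij}=\langle T(z)^*\hat e_j,T(z)^*\hat e_i\rangle$; the point is that the entries of $G$ and $H$ are \emph{exactly} the additional evaluation functions in $\Lambda_2$, so these Gram matrices are read off directly rather than assembled.

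Next I would establish the subroutine. Given a Hermitian positive semidefinite matrix $C$ whose entries can be computed exactly (in the real-arithmetic model) and a target $\delta>0$, one approximates $\lambda_{\min}(C)$ by bisection on $[0,\mathrm{tr}(C)]$, an interval that contains $\lambda_{\min}(C)$, where the test at each midpoint $t$ is whether $C-tI$ is positive definite (equivalently whether $\lambda_{\min}(C)>t$). This test is carried out by running symmetric Gaussian elimination, computing the pivots $d_1,d_2,\dots$ one at a time: if some $d_k\le0$ occurs we stop and answer ``no'' — the $k$th leading principal minor $d_1\cdots d_k$ is then non-positive, so by eigenvalue interlacing $C-tI$ has a non-positive eigenvalue — whereas if all of $d_1,\dots,d_N$ are positive then Sylvester's criterion gives ``yes''; crucially, division by a pivot $d_k$ occurs only after $d_k>0$ has been verified, so no illegal division arises. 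Each such test uses only arithmetic operations and comparisons, and $\lceil\log_2(\mathrm{tr}(C)/\delta)\rceil$ bisection steps produce an interval of length at most $\delta$ containing $\lambda_{\min}(C)$.

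Finally I would assemble the pieces: run the eigenvalue subroutine with accuracy $\delta=\epsilon^2/4$ on $M^*M$ (respectively on $G$ and on $H$), take approximate square roots of the outputs to accuracy $\epsilon/2$ — legitimate since $|\sqrt{a}-\sqrt{b}|\le\sqrt{|a-b|}$ and a square root is itself computed to arbitrary accuracy by one further bisection — and return the minimum of the two resulting numbers, incurring total error at most $\epsilon$. For each fixed input $T$ (and fixed $z,n_1,n_2,\epsilon$) this uses finitely many arithmetic operations and comparisons, as required; the operation count depends on the input only through $\mathrm{tr}(C)$ and $\epsilon$, which is permitted for arithmetic algorithms. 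I expect the only genuinely delicate point to be the positive-definiteness test over the reals, and specifically the possibility of a zero pivot; this is disposed of by the observation that a vanishing leading principal minor is itself a certificate that positive-definiteness fails, so the elimination can always be stopped cleanly before any division by zero.
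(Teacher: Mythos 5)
Your proposal is correct, and it follows the same overall reduction as the paper: identify $\mathcal{Q}_{n_1}T(z)\mathcal{P}_{n_2}^*$ and $\mathcal{P}_{n_1}T(z)^*\mathcal{Q}_{n_2}^*$ with finite rectangular matrices whose entries are read from $\Lambda_1$, reduce injection moduli to smallest singular values, and (for $\Lambda_2$) use the Gram-matrix identity $\sigma_{\mathrm{inf}}(T(z)\mathcal{P}_{n_2}^*)=\sqrt{\sigma_{\mathrm{inf}}(\mathcal{P}_{n_2}T(z)^*T(z)\mathcal{P}_{n_2}^*)}$, noting that the entries of this Gram matrix are exactly the extra evaluations supplied by $\Lambda_2$. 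Where you diverge is in how the finite-dimensional core is handled: the paper at this point simply invokes an existing result (\cite[Theorem 6.7]{colbrook3}) asserting that smallest singular values of rectangular matrices are computable to any accuracy with arithmetic operations and comparisons, whereas you prove this from scratch by bisecting for $\lambda_{\min}$ on $[0,\operatorname{tr}(C)]$ and implementing the positive-definiteness test $\lambda_{\min}(C)>t$ via the pivots of symmetric Gaussian elimination on $C-tI$, appealing to Sylvester's criterion and stopping cleanly the moment a nonpositive pivot (hence a nonpositive leading principal minor) appears. This is a legitimate and fully rigorous alternative: it trades the paper's brevity for self-containedness, and it correctly handles the only genuinely delicate issue in the real-arithmetic model, namely avoiding division by a zero or negative pivot. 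One small remark: your appeal to eigenvalue interlacing to conclude that a nonpositive leading principal minor rules out positive definiteness is more than is needed — Sylvester's criterion already gives this directly, since a positive definite Hermitian matrix has all leading principal minors strictly positive. The remaining bookkeeping (error budget $\delta=\epsilon^2/4$ for $\lambda_{\min}$, the Lipschitz bound $|\sqrt a-\sqrt b|\le\sqrt{|a-b|}$, bisection for the square root, and $1$-Lipschitzness of $\min$) is all sound.
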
 

\begin{proof}
For the first statement, we view $\mathcal{Q}_{n_1}T(z)\mathcal{P}_{n_2}^*$ as a $n_1\times n_2$ matrix through the inner products $\langle T(z)e_j,\hat{e}_i \rangle_{\mathcal{H}_2}$. In a similar fashion, we view $\mathcal{P}_{n_1}T(z)^*\mathcal{Q}_{n_2}^*$ as a $n_1\times n_2$ matrix. Assuming access to these inner products though $\Lambda_1$, computing $\gamma_{n_2,n_1}(z,T)$ consists of computing the smallest singular values of the two \textit{finite rectangular} matrices, which can be done using arithmetic operations to any desired accuracy \cite[Theorem 6.7]{colbrook3}.

For the second statement, we use the fact that
$$
\sigma_{\mathrm{inf}}(T(z)\mathcal{P}_{n_2}^*)=\sqrt{\sigma_{\mathrm{inf}}(\mathcal{P}_{n_2}T(z)^*T(z)\mathcal{P}_{n_2}^*)}.
$$
We can obtain arbitrarily accurate approximations of $\sigma_{\mathrm{inf}}(\mathcal{P}_{n_2}T(z)^*T(z)\mathcal{P}_{n_2}^*)$ using the inner products in $\Lambda_2$. We then form approximations of the square roots. A similar argument applies to $\sigma_{\mathrm{inf}}(T(z)^*\mathcal{Q}_{n_2}^*)$ and, hence, to $\gamma_{n_2}(z,T)$.
\end{proof}

\begin{remark}[Rectangular truncations replacing $\Lambda_2$]
In applications, one can often select $n_1=f(n_2)$, for instance when the matrix representation of $T(z)$ is sparse (i.e., has finitely many nonzero entries per row and column). However, one can also show that, in general, this choice is not possible without additional information, such as that provided by $\Lambda_2$.
\end{remark}

\subsection{Algorithms}
\label{sec:basic_convergence}

We can now construct our algorithms and establish their convergence properties.

\begin{remark}[Computation of singular values]
In what follows, we write the algorithms for exact evaluation of each $\gamma_{n_2,n_1}(z,T)$ in the case of $\Lambda_1$ and $\gamma_{n}(z,T)$ in the case of $\Lambda_2$. However, by applying \cref{approx_gamma}, we can similarly derive arithmetic algorithms that approximate $\gamma_{n_2,n_1}(z,T)$ \textit{from below} to an accuracy $1/n_1$ (for instance, by computing an approximation accurate to $1/(2n_1)$ and then subtracting $1/(2n_1)$), and analogously approximate $\gamma_{n}(z,T)$ \textit{from above}. This modification does not affect the subsequent arguments.
\end{remark}

\begin{algorithm}[t]
\textbf{Input:} Operator pencil $T\in\Omega_{\mathrm{NL}}^U$, $\epsilon>0$, grids $\{G_n\}$, $n_1,n_2\in\mathbb{N}$.
\begin{algorithmic}[1]
\State For each $z\in G_{n_2}$, compute $\gamma_{n_2,n_1}(z,T)$ using $\Lambda_1$.
\State Set $\Gamma_{n_2,n_1}^\epsilon(T)=\left\{z\in G_{n_2}:\gamma_{n_2,n_1}(z,T)+\frac{1}{n_2}\leq \epsilon\right\}$.
\end{algorithmic} \textbf{Output:} $\Gamma_{n_2,n_1}^\epsilon(T)$, where $\lim\limits_{n_2\rightarrow\infty}\lim\limits_{n_1\rightarrow\infty}\Gamma_{n_2,n_1}^\epsilon(T)=\spec_\epsilon(T)$.
\caption{Two-limit convergent algorithm for computing pseudospectra.}
\label{alg1}
\end{algorithm}

Our first algorithm, \cref{alg1}, uses $\Lambda_1$ to compute pseudospectra. Since each grid $G_n$ is finite and $\gamma_{n_2,n_1}$ is non-decreasing in $n_1$, we have
$$
\lim_{n_1\rightarrow\infty}\Gamma_{n_2,n_1}^\epsilon(T)=\left\{z\in G_{n_2}:\gamma_{n_2}(z,T)+\frac{1}{n_2}\leq \epsilon\right\}=:\Gamma_{n_2}^\epsilon(T).
$$
Since $\gamma_{n_2}(z,T)\geq \gamma(z,T)$, it follows that if $z\in\Gamma_{n_2}^\epsilon(T)$, then $\gamma(z,T)\leq\gamma_{n_2}(z,T)<\epsilon$, and thus
$$
\Gamma_{n_2}^\epsilon(T)\subset \spec_{\epsilon}(T).
$$
We further show that $\lim_{n_2\rightarrow\infty}\Gamma_{n_2}^\epsilon(T)=\spec_{\epsilon}(T)$, with convergence in the Attouch--Wets topology constructed in \cref{sec:metric_def}.

\begin{proposition}[Convergence to pseudospectra]\label{prop_pseudospec}
Let $U\subset\mathbb{C}$ be a domain and $T\in\Omega_{\mathrm{NL}}^U$, then
$$
\lim_{n_2\rightarrow\infty}\Gamma_{n_2}^\epsilon(T)=\spec_\epsilon(T),
$$
with convergence in the Attouch--Wets topology.
\end{proposition}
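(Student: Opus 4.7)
I would begin by invoking the local-uniform characterization of $\dAW$-convergence from the discussion after \cref{def:AW_top1}: it suffices to show, for every $m \in \mathbb{N}$, that both $e_U(\Gamma_{n_2}^\epsilon(T) \cap S_m(x_0), \spec_\epsilon(T)) \to 0$ and $e_U(\spec_\epsilon(T) \cap S_m(x_0), \Gamma_{n_2}^\epsilon(T)) \to 0$ as $n_2 \to \infty$. The first bound (``no spectral pollution'') is immediate from the discussion preceding the proposition, where $\Gamma_{n_2}^\epsilon(T) \subset \spec_\epsilon(T)$ is already noted. Everything therefore reduces to the second bound (``no spectral invisibility''), which is the content of the argument.

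To prove this, fix $m$ and $\eta > 0$. By \cref{balls_separated}, the set $K_m := \spec_\epsilon(T) \cap \overline{S_m(x_0)}$ is a compact subset of $U$, bounded in Euclidean diameter and bounded away from $\partial U$; on it, $d_U$ is bi-Lipschitz equivalent to the Euclidean metric, so it suffices to place grid points of $\Gamma_{n_2}^\epsilon(T)$ within Euclidean distance $\eta$ of every element of $K_m$. The set $V := \{z \in U : \gamma(z,T) < \epsilon\}$ is open by continuity of $\gamma(\cdot, T)$ (noted after \cref{lemma:injection1}), and $\spec_\epsilon(T) = \mathrm{Cl}_U(V)$. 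A finite Euclidean cover of $K_m$ by balls $B(z_i, \eta/4)$, together with a choice of representatives $w_i \in V \cap B(z_i, \eta/4)$, produces a uniform margin $\delta := \min_i (\epsilon - \gamma(w_i, T)) > 0$. Continuity of $\gamma$ then lets me shrink to compact neighborhoods $K_i \ni w_i$ with $K_i \subset B(w_i, \eta/4)$ on which $\gamma(\cdot,T) \leq \epsilon - 3\delta/4$.

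The crux is to upgrade the pointwise monotone descent $\gamma_{n_2}(\cdot, T) \searrow \gamma(\cdot, T)$ from \cref{CHAP1_lem:monotonicity_gamma}(ii) to uniform convergence on each $K_i$. Dini's theorem achieves this, provided each $\gamma_{n_2}(\cdot, T)$ is continuous—a fact I would verify by applying \cref{prop_sigma_inf_cts} to the bounded finite-rank truncations $T(z)\mathcal{P}_{n_2}$ and $T(z)^*\mathcal{Q}_{n_2}$. With uniformity in hand, I choose $n_2$ large enough that $\gamma_{n_2}(z, T) < \epsilon - \delta/2$ throughout $K_i$ and $1/n_2 < \delta/4$; the grid-density assumption then supplies some $g_i^{(n_2)} \in G_{n_2} \cap K_i$, which satisfies $\gamma_{n_2}(g_i^{(n_2)}, T) + 1/n_2 < \epsilon$ and thus lies in $\Gamma_{n_2}^\epsilon(T)$ within Euclidean distance $\eta$ of any $z \in K_m \cap B(z_i, \eta/4)$. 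Since $\eta$ was arbitrary, this yields the desired Attouch--Wets bound.

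The main obstacle is coordinating three independent approximation scales: the margin $1/n_2$ in the definition of $\Gamma_{n_2}^\epsilon$, the pointwise convergence of $\gamma_{n_2}$, and the densification of the grids $G_{n_2}$. Pointwise convergence alone does not license substituting nearby grid points for the $w_i$, so the delicate step is securing local uniform convergence via Dini's theorem; this tacitly depends on continuity of $\gamma_{n_2}(\cdot, T)$, which is the one subsidiary claim that needs careful justification from gap-continuity of $T$ together with the finite-rank nature of the truncations.
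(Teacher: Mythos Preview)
Your overall strategy is sound, but it takes a genuinely different route from the paper and carries one subtle gap. The paper argues by contradiction: assuming convergence fails, it extracts a sequence $z_{n_2}\in\spec_\epsilon(T)\cap S_m(x_0)$ bounded away from $\Gamma_{n_2}^\epsilon(T)$, uses \cref{balls_separated} to pass to a subsequential limit $z\in U$, picks a single nearby $w$ with $\gamma(w,T)<\epsilon$, then a \emph{fixed} grid point $w_N$ near $w$, and invokes only the pointwise monotone descent $\gamma_{n_2}(w_N,T)\searrow\gamma(w_N,T)$ from \cref{CHAP1_lem:monotonicity_gamma}(ii) to force $w_N\in\Gamma_{n_2}^\epsilon(T)$ for large $n_2$. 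No regularity of $\gamma_{n_2}(\cdot,T)$ in $z$ is ever used; the contradiction framework reduces everything to a single fixed point.

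Your direct cover-and-Dini argument is more elaborate, and its delicate step is exactly where a gap appears. To apply \cref{prop_sigma_inf_cts} to the truncations $T(z)\mathcal{P}_{n_2}^*$ and $T(z)^*\mathcal{Q}_{n_2}^*$ you need gap continuity of these in $z$; since they are bounded, this is norm continuity by \cref{gap_properties}(i), which in turn means $z\mapsto T(z)e_j$ must be continuous for each fixed core vector $e_j$. But gap continuity of $z\mapsto T(z)$ does \emph{not} deliver this in the unbounded setting: graph proximity guarantees that $(e_j,T(z)e_j)$ is close to \emph{some} point $(u,T(z')u)$ of the nearby graph, not that $T(z')e_j$ itself is close to $T(z)e_j$. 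So the continuity of $\gamma_{n_2}(\cdot,T)$ that Dini requires is not justified by the cited proposition. The paper's approach is more economical precisely because it never needs this: working at one fixed grid point, pointwise convergence of $\gamma_{n_2}$ (which is all \cref{CHAP1_lem:monotonicity_gamma} provides) suffices.
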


\begin{proof}
Suppose for a contradiction that this were false. The inclusion $\Gamma_{n_2}^\epsilon(T)\subset \spec_{\epsilon}(T)$ means that we may assume without loss of generality that $\spec_{\epsilon}(T)\neq\emptyset$ (otherwise convergence clearly holds). By the characterization of the Attouch--Wets topology, there exists an $m\in\mathbb{N}$ such that
$$
\limsup_{n_2\rightarrow\infty} e_{U}(\spec_{\epsilon}(T)\cap S_m(x_0),\Gamma_{n_2}^\epsilon(T))>0.
$$
Without loss of generality, we may assume that there exists $z_{n_2}\in\spec_{\epsilon}(T)\cap S_m(x_0)$ and $\delta>0$ such that $d_U(z_{n_2},\Gamma_{n_2}^\epsilon(T))\geq \delta$. We now use \cref{balls_separated} to see that the closure of $\spec_{\epsilon}(T)\cap S_m(x_0)$ is contained in the set $U$ and separated from its boundary $\partial U$. This allows us to assume two things without loss of generality. First, $\dist(z_{n_2},\Gamma_{n_2}^\epsilon(T))\geq \delta$ (by taking $\delta$ smaller if necessary). Second, $\lim_{n_2\rightarrow\infty}z_{n_2}=z\in U$ (by taking a subsequence if necessary). 

By definition of $\spec_{\epsilon}(T)$, we may choose $w\in U$ such that $|z-w|\leq \delta/4$ and $\gamma(w,T)<\epsilon$. Since $\gamma(\cdot,T)$ is continuous, there exists $\eta<\delta/2$ such if $w'$ has $|w'-w|<\eta$, then $w'\in U$ and $\gamma(w',T)+\eta<\epsilon$. For $n_2\geq N$ for some $N$, there exists $w_{n_2}=w_N\in G_{n_2}$ with $|w_{n_2}-w|<\eta$. Hence,
$$
\lim_{n_2\rightarrow\infty}\gamma_{n_2}(w_N,T)+\frac{1}{n_2}=\gamma(w_N,T)<\epsilon.
$$
It follows that $w_{N}\in\Gamma_{n_2}^\epsilon(T)$ for sufficiently large $n_2$.
But then $|w_{N}-z|\leq |w_{N}-w|+|w-z|\leq 3\delta/4$. Hence, $\limsup_{n_2\rightarrow\infty}|w_{N}-z_{n_2}|\leq 3\delta/4$, which contradicts $\dist(z_{n_2},\Gamma_{n_2}^\epsilon(T))\geq \delta$.
\end{proof}

\begin{algorithm}[t]
\textbf{Input:} Operator pencil $T\in\Omega_{\mathrm{NL}}^U$, grids $\{G_n\}$, $n_1,n_2,n_3\in\mathbb{N}$.
\begin{algorithmic}[1]
\State Set $\Gamma_{n_3,n_2,n_1}(T)=\Gamma_{n_2,n_1}^{1/n_3}(T)$, where $\Gamma_{n_2,n_1}^\epsilon$ is the output of \cref{alg1}.
\end{algorithmic} \textbf{Output:} $\Gamma_{n_3,n_2,n_1}(T)$, where $\lim\limits_{n_3\rightarrow\infty}\lim\limits_{n_2\rightarrow\infty}\lim\limits_{n_1\rightarrow\infty}\Gamma_{n_3,n_2,n_1}(T)=\spec(T)$.
\caption{Three-limit convergent algorithm for computing spectra.}
\label{alg2}
\end{algorithm}

It follows that
\begin{equation}\label{convergence2}
\lim_{n_2\rightarrow\infty}\lim_{n_1\rightarrow\infty}\Gamma_{n_2,n_1}^\epsilon(T)=\spec_\epsilon(T).
\end{equation}
To compute spectra, we compute pseudospectra for a sequence of decreasing $\epsilon$, as outlined in \cref{alg2}. The following proposition shows that $\spec_\epsilon(T)$ converges to $\spec(T)$ as $\epsilon\downarrow0$.

\begin{proposition}[Convergence to spectra]
Let $U\subset\mathbb{C}$ be a domain and $T\in\Omega_{\mathrm{NL}}^U$, then
$$
\lim_{\epsilon\downarrow 0}\spec_\epsilon(T)=\spec(T),
$$
with convergence in the Attouch--Wets topology.
\end{proposition}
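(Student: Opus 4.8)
The plan is to verify the Attouch--Wets convergence directly via its characterization as locally uniform convergence of sets: for each fixed $m\in\mathbb{N}$ I must show that, as $\epsilon\downarrow 0$,
$$
e_U\!\left(\spec(T)\cap S_m(x_0),\,\spec_\epsilon(T)\right)\longrightarrow 0
\quad\text{and}\quad
e_U\!\left(\spec_\epsilon(T)\cap S_m(x_0),\,\spec(T)\right)\longrightarrow 0,
$$
with the case $\spec(T)=\emptyset$ (where convergence to $\emptyset$ means $\spec_\epsilon(T)\cap S_m(x_0)=\emptyset$ for small $\epsilon$) treated separately. It suffices to test against sequences $\epsilon_k\downarrow 0$. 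The first quantity is immediate from \cref{CHAP1_eq:spectra_char}: since $\spec(T)=\{\gamma(\cdot,T)=0\}\subset\{\gamma(\cdot,T)<\epsilon\}\subset\spec_\epsilon(T)$ for every $\epsilon>0$, it is identically zero. (Monotonicity $\spec_{\epsilon}(T)\subset\spec_{\epsilon'}(T)$ for $\epsilon<\epsilon'$ also holds but is not needed.)

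The substance is the second quantity, which I would handle together with the empty case by a single compactness argument. Suppose for contradiction that there exist $m\in\mathbb{N}$, $\delta>0$, a sequence $\epsilon_k\downarrow 0$, and points $z_k\in\spec_{\epsilon_k}(T)\cap S_m(x_0)$ with $d_U(z_k,\spec(T))\ge\delta$ (this inequality being vacuous, hence automatic, when $\spec(T)=\emptyset$, in which case one simply picks any $z_k$ in the intersection). Because $\spec_{\epsilon_k}(T)$ is a relative closure, I may replace $z_k$ by a point $z_k'$ with $|z_k'-z_k|$ arbitrarily small and $\gamma(z_k',T)<\epsilon_k$; shrinking $\delta$ if necessary this keeps $d_U(z_k',\spec(T))\ge\delta/2$. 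The geometric input is \cref{balls_separated}: $S_m(x_0)$ lies at positive Euclidean distance $c>0$ from $\partial U$, and it is Euclidean-bounded since any Euclidean path realizes at least its Euclidean length in $g_U$, so $d_U(x_0,\cdot)$ dominates $\|\cdot-x_0\|$. Choosing $|z_k'-z_k|<\min\{c,\delta\}/2$ therefore confines all $z_k'$ to a fixed compact set $K\subset U$. Passing to a subsequence, $z_k'\to z^\ast\in K\subset U$; by continuity of $\gamma(\cdot,T)$ (which follows from \cref{prop_sigma_inf_cts} and $\hat\delta(S^*,T^*)=\hat\delta(S,T)$), $\gamma(z^\ast,T)=\lim_k\gamma(z_k',T)\le\lim_k\epsilon_k=0$, so $z^\ast\in\spec(T)$. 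But $d_U$ induces the usual topology on $U$ (indeed on $K$ it is comparable to the Euclidean metric), so $d_U(z_k',z^\ast)\to0$, contradicting $d_U(z_k',\spec(T))\ge\delta/2$. In the empty case the same chain produces $z^\ast\in\spec(T)=\emptyset$, the contradiction being immediate. This establishes the required convergence.

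The only delicate point---and the step I expect to be the main obstacle---is the boundary behaviour: one must guarantee that the perturbed points $z_k'$ do not drift toward $\partial U$ and that the limit $z^\ast$ genuinely lies in $U$, since $\gamma(z^\ast,T)$ is otherwise undefined. This is exactly what the Riemannian blow-up of the metric near $\partial U$ buys us through \cref{balls_separated}; the remaining ingredients (the characterization of $\spec_\epsilon(T)$ via $\gamma$, continuity of $\gamma(\cdot,T)$, and a routine diagonal extraction) are already in hand.
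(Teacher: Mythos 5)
Your proof is correct and follows essentially the same route as the paper: argue by contradiction, use \cref{balls_separated} to confine the offending points to a compact subset of $U$, extract a convergent subsequence, and invoke continuity of $\gamma(\cdot,T)$ to conclude the limit lies in $\spec(T)$. You spell out some steps the paper leaves implicit (the perturbation of $z_k$ to a point where $\gamma<\epsilon_k$ strictly, the Euclidean boundedness of $S_m(x_0)$, and the explicit treatment of $\spec(T)=\emptyset$), but the underlying argument is the same.
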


\begin{proof}
We have $\spec(T)\subset\spec_\epsilon(T)$ for any $\epsilon>0$ and clearly $\spec_{\epsilon'}(T)\subset \spec_\epsilon(T)$ for $\epsilon'<\epsilon$. Suppose for a contradiction that the convergence in the proposition does not hold. Then there exists some $r>0$, $\delta>0$, $\epsilon_n\downarrow 0$, and $z_n$ such that
$$
z_n\in \spec_{\epsilon_n}(T)\cap S_r(x_0)\quad\text{and}\quad \dist(z_n,\spec(T))\geq \delta.
$$
Arguing as in the prof if \cref{prop_pseudospec} (using \cref{balls_separated}), we may assume without loss of generality that $\lim_{n\rightarrow\infty}z_n=z\in U$. Since $\gamma(\cdot,T)$ is continuous, we must have $\gamma(z,T)=0$ and, hence, $z\in\spec(T)$, a contradiction.
\end{proof}

It follows that
\begin{equation}\label{convergence1}
\lim_{n_3\rightarrow\infty}\lim_{n_2\rightarrow\infty}\lim_{n_1\rightarrow\infty}\Gamma_{n_3,n_2,n_1}(T)=\spec(T).
\end{equation}
In other words, the output of \cref{alg2} converges to the spectrum.

\subsection{Classifications of problems and algorithmic optimality}

We now classify our computational problems and prove that \cref{alg1,alg2} are optimal.

\begin{theorem}[SCI classification of nonlinear spectral problems]\label{SCI_bounds1}
Let $U\subset\mathbb{C}$ be a domain. Then
$$
\Delta_3^G\not\ni\left\{\spec,\Omega_{\mathrm{NL}}^U,\MAW,\Lambda_1\right\}\in \Pi_3^A,\quad \Delta_2^G\not\ni\left\{\spec,\Omega_{\mathrm{NL}}^U,\MAW,\Lambda_2\right\}\in \Pi_2^A
$$
and for any $\epsilon>0$,
$$
\Delta_2^G\not\ni\left\{\spec_\epsilon,\Omega_{\mathrm{NL}}^U,\MAW,\Lambda_1\right\}\in \Sigma_2^A,\quad \Delta_1^G\not\ni\left\{\spec_\epsilon,\Omega_{\mathrm{NL}}^U,\MAW,\Lambda_2\right\}\in \Sigma_1^A.
$$
\end{theorem}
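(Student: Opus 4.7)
The theorem bundles four upper bounds (inclusions in $\Sigma_m^A$/$\Pi_m^A$) and four matching lower bounds (non-inclusions in $\Delta_k^G$), and my plan is to treat the two phases separately. The upper bounds follow from \cref{alg1,alg2} once one attaches appropriate one-sided ``wrapper'' sets $X_n$, while the lower bounds require adversarial constructions that exploit the finite-query structure of general algorithms.

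For the upper bounds on pseudospectra, the key monotonicity is that the computed set $\Gamma_{n_2}^\epsilon(T) = \{z\in G_{n_2} : \gamma_{n_2}(z,T) + 1/n_2 \leq \epsilon\}$ satisfies $\Gamma_{n_2}^\epsilon(T) \subset \spec_\epsilon(T)$ by \cref{CHAP1_lem:monotonicity_gamma}(ii) and \cref{CHAP1_eq:spectra_char}, and converges in $\MAW$ by \cref{prop_pseudospec}. Under $\Lambda_2$, \cref{approx_gamma} shows that $\gamma_{n_2}$ is directly computable arithmetically, giving a single-limit tower; taking $X_{n_2} = \spec_\epsilon(T)$ (non-computable but existent) yields $d(X_{n_2},\Xi)=0$, hence $\Sigma_1^A$. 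Under $\Lambda_1$, \cref{alg1}'s two-limit structure plus the monotonicity $\gamma_{n_2,n_1}\uparrow\gamma_{n_2}$ (\cref{CHAP1_lem:monotonicity_gamma}(i)) preserves containment inside $\spec_\epsilon(T)$ at every level, yielding $\Sigma_2^A$. For the spectrum I use \cref{alg2} with $\epsilon_{n_k}=1/n_k$: here $\Gamma_{n_k}(T)\to\spec(T)$, and since $\spec(T)\subset\spec_{1/n_k}(T)$ is close to $\Gamma_{n_k}(T)$ once the grid is fine enough, take $X_{n_k}(T)=\spec_{1/n_k}(T)$ (enlarged by a $2^{-n_k}$-neighborhood in $S_{n_k}(x_0)$ if needed) to satisfy the $\Pi$-class requirements $\spec(T) \subset X_{n_k}(T)$ and $d(X_{n_k},\Gamma_{n_k})\leq 2^{-n_k}$. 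This gives $\Pi_3^A$ under $\Lambda_1$ and $\Pi_2^A$ under $\Lambda_2$.

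For the lower bounds I would use a uniform adversarial strategy: since each level-1 general algorithm queries only finitely many functions on any given input, one can construct pencils that are identical on that finite subset of $\Lambda_j$ but have drastically different spectra. The cleanest test family consists of diagonal linear pencils $T_A(z)=A-zI$ with $A=\mathrm{diag}(a_1,a_2,\ldots)$, which lie inside $\Omega_{\mathrm{NL}}^U$ (by \cref{linear_example}), so any lower bound for the classical linear spectral problem on $\mathcal{B}(\ell^2(\mathbb{N}))$ transfers directly. The easiest case, $\{\spec_\epsilon,\Omega_{\mathrm{NL}}^U,\MAW,\Lambda_2\}\notin\Delta_1^G$, is a pure ``hiding'' attack: given any candidate tower $\{\Gamma_n\}$ and any $n_0$, $\Gamma_{n_0}$ reads only finitely many diagonal entries, so placing $a_k=L$ at an untouched index $k$ with $L$ arbitrarily large yields a pencil whose pseudospectrum contains $\overline{B_\epsilon(L)}$ while being indistinguishable from $T_0(z)=-zI$, violating the $2^{-n_0}$ budget. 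For the remaining $\Delta_2^G$ and $\Delta_3^G$ bounds (no error control), the hiding attack is upgraded to a Baire-style diagonal construction: one iteratively selects indices and magnitudes unseen by the tower so that the committed output at stage $n$ is AW-far from the ``next'' revealed spectral piece, producing a single pencil on which $\Gamma_n$ cannot AW-converge. The extra limit required under $\Lambda_1$ versus $\Lambda_2$ stems from the fact that $\Lambda_1$ only provides monotone-from-below access to $\gamma_{n_2}(z,T)$, so an extra inner limit is needed to recover the true value and therefore another limit's worth of adversarial freedom is available.

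The main technical obstacle is the lower bound $\{\spec,\Omega_{\mathrm{NL}}^U,\MAW,\Lambda_2\}\notin\Delta_2^G$. With the richer evaluation set, the limit $n_1\to\infty$ recovers exact values of $\sigma_{\inf}$ of finite compressions, so the diagonal construction must play the monotone convergence $\gamma_{n_2}\downarrow\gamma$ (\cref{CHAP1_lem:monotonicity_gamma}(ii)) against the need to decide membership in $\spec(T)$ with no quantitative rate. Concretely, one must coordinate a sequence of hidden diagonal entries at indices $k_n\to\infty$, chosen so that at each stage the tower has already committed its output based on queries in a finite index block, yet the next hidden entry forces $\spec(T)$ to leave a fixed AW-ball of radius $\delta>0$ around that output. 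A final technical check is that the limiting pencil still lies in $\Omega_{\mathrm{NL}}^U$, i.e., is continuous in the gap topology; this is automatic for pencils of the form $A-zI$ but is the place where care is required if one prefers a genuinely nonlinear witness rather than importing the linear lower bound.
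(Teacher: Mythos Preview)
Your upper-bound discussion is essentially aligned with the paper, which simply cites the convergence results for \cref{alg1,alg2} together with the inclusions $\Gamma_{n_2}^\epsilon(T)\subset\spec_\epsilon(T)$ and $\spec(T)\subset\spec_{1/n_3}(T)$.

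The lower-bound strategy, however, has a genuine gap. You propose to use diagonal linear pencils $T_A(z)=A-zI$ with $A=\mathrm{diag}(a_1,a_2,\ldots)$ for all four lower bounds. For diagonal $A$, the extra information in $\Lambda_2$ is redundant: $\langle T(z)e_j,T(z)e_i\rangle=|a_j-z|^2\delta_{ij}$ is computable from $\langle T(z)e_j,e_i\rangle=(a_j-z)\delta_{ij}$ by arithmetic operations, so any $\Lambda_2$-algorithm on this subclass can be simulated by a $\Lambda_1$-algorithm. Consequently, diagonal witnesses cannot separate the $\Lambda_1$ and $\Lambda_2$ classifications. In particular, if diagonal pencils forced $\{\spec,\Omega_{\mathrm{NL}}^U,\Lambda_1\}\notin\Delta_3^G$, the same argument would force $\{\spec,\Omega_{\mathrm{NL}}^U,\Lambda_2\}\notin\Delta_3^G$, contradicting the $\Pi_2^A\subset\Delta_3^G$ upper bound you just established. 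The analogous obstruction kills the $\notin\Delta_2^G$ bound for $\spec_\epsilon$ under $\Lambda_1$. Your remark that ``$\Lambda_1$ only provides monotone-from-below access to $\gamma_{n_2}$'' explains why the \emph{upper} bound is worse, but it is not an adversarial mechanism for a \emph{lower} bound.

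The paper avoids this by a clean reduction rather than a fresh adversarial construction: pick a closed ball $B_r(z_0)\subset U$, and for any bounded $A$ on $l^2(\mathbb{N})$ with $\|A\|\leq 1$ set $T(z)=A-\tfrac{1}{r}(z-z_0)I$, so that $\spec(T)=r\cdot\spec(A)+z_0\subset U$. A tower for $\spec(T)$ (or $\spec_\epsilon(T)$) then yields a tower of the same height for $\spec(A)$ (or $\spec_\epsilon(A)$), and one simply invokes the known sharp lower bounds for the full class $\mathcal{B}(l^2(\mathbb{N}))$ under $\Lambda_1$ and $\Lambda_2$ from \cite{ben2015can,colbrook2020PhD}. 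Those linear lower bounds for $\Lambda_1$ genuinely require non-diagonal (indeed non-normal) operators, which is exactly the ingredient your diagonal-only scheme lacks. The affine rescaling also handles the issue, unaddressed in your sketch, that $U$ need not contain all of $\mathbb{C}$.
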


\begin{proof}
The upper bounds for $\Lambda_1$ follow from \cref{convergence1,convergence2} and the inclusions discussed in the previous subsection. For the upper bounds for $\Lambda_2$, we use \cref{approx_gamma} to bypass the first limit when approximating $\gamma(z,T)$ in \cref{alg1,alg2}. For the lower bounds, we argue for $\mathcal{H}_1=\mathcal{H}_2=l^2(\mathbb{N})$, and the general case is entirely analogous after identifying operators with infinite matrices with respect to the bases of $\mathcal{H}_1$ and $\mathcal{H}_2$. Given a domain $U$, we may choose a closed (Euclidean) ball $V=B_r(z_0)\subset U$. Let $A$ be a bounded linear operator acting on $l^2(\mathbb{N})$ with $\|A\|\leq 1$. We set
$$
T(z)= A-\frac{1}{r}(z-z_0)I.
$$
Using the fact that $\|A\|\leq 1$, we have $\spec(T)=r\cdot\spec(A)+z_0$. Hence, the existence of towers of algorithms for $\spec(T)$ implies the existence of corresponding towers (with the same SCI upper bounds from the towers) for $\spec(A)$. The proof is finished by lifting the lower bounds for computing spectra and pseudospectra of such $A$ (e.g., \cite{colbrook2020PhD,ben2015can}).
\end{proof}

We end this section with lower bounds for Hermitian operator pencils.

\begin{definition}[Hermitian pencils]
Let $U\subset \mathbb{C}$ be a domain that is symmetric about the real axis (i.e., if $z\in U$, then $\overline{z}\in U$), $\mathcal{H}_1=\mathcal{H}_2$, and $T\in\Omega_{\mathrm{NL}}^U$. We say that $T$ is Hermitian if $T(\overline{z})=T(z)^*$ for all $z\in U$. We denote the class of such pencils by $\Omega_{\mathrm{NL}}^{U,H}$
\end{definition}

If $T$ is Hermitian, then both $\spec(T)$ and $\spec_\epsilon(T)$ are symmetric about the real axis.
In the case of linear families described in \cref{linear_example}, the definition of a Hermitian pencil reduces to the standard definition for operators. In this setting, computing the spectrum has the same classification as computing the pseudospectrum (i.e., $\Sigma_2^A$ when using $\Lambda_1$ and $\Sigma_1^A$ when using $\Lambda_2$). Consequently, the task becomes easier than in the general case of bounded operators. It is somewhat surprising that this is no longer true in the nonlinear case, as demonstrated by the following theorem.

\begin{theorem}[The SCI classifications do not change for Hermitian problems]\label{hermitian_still hard}
Let $U\subset\mathbb{C}$ be a domain that is symmetric about the real axis. Then
$$
\left\{\spec,\Omega_{\mathrm{NL}}^{U,H},\MAW,\Lambda_1\right\}\not\in \Delta_3^G,\quad \left\{\spec,\Omega_{\mathrm{NL}}^{U,H},\MAW,\Lambda_2\right\}\not\in \Delta_2^G.
$$
\end{theorem}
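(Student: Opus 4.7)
My plan is to extend the reduction used in \cref{SCI_bounds1} to the Hermitian setting via a block-anti-diagonal construction that embeds an arbitrary bounded operator on $l^2(\mathbb{N})$ into a Hermitian pencil. Since $U$ is symmetric about $\mathbb{R}$, I can pick $z_0 \in U \cap \mathbb{R}$ and $r > 0$ with $B_r(z_0) \subset U$. Given $A \in \mathcal{B}(l^2(\mathbb{N}))$ with $\|A\| \leq 1$, set $\tilde A = (z_0 + \tfrac{ir}{2}) I + \tfrac{r}{4} A$, so that $\spec(\tilde A) \subset \overline{B_{r/4}(z_0 + \tfrac{ir}{2})}$ lies strictly in the open upper half plane with imaginary part in $[r/4, 3r/4]$. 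Define the pencil
\[
T(z) = \begin{pmatrix} 0 & \tilde A - zI \\ \tilde A^* - zI & 0 \end{pmatrix}
\]
on $l^2(\mathbb{N}) \oplus l^2(\mathbb{N})$, identified with $l^2(\mathbb{N})$ through an interleaved orthonormal basis. A direct computation gives $T(\bar z) = T(z)^*$, so $T \in \Omega_{\mathrm{NL}}^{U,H}$, and the block-anti-diagonal inversion formula yields $\spec(T) = \spec(\tilde A) \cup \overline{\spec(\tilde A)}$: two compact pieces contained in $V = B_r(z_0)$, separated by the horizontal strip $\{|\Im z| < r/4\}$.

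Matrix elements of $T(z)$, $T(z)^* T(z)$, and $T(z) T(z)^*$ are polynomials in $z$, $\bar z$ with coefficients built from matrix elements of $A$, $A^*$, $A^*A$, and $AA^*$, so $\Lambda_1$-access (resp. $\Lambda_2$-access) for $A$ yields $\Lambda_1$-access (resp. $\Lambda_2$-access) for $T$ via finitely many arithmetic operations. Suppose, for contradiction, that $\{\spec, \Omega_{\mathrm{NL}}^{U,H}, \MAW, \Lambda_1\} \in \Delta_3^G$ is witnessed by a height-two tower $\{\Gamma_{n_2, n_1}\}$; the $\Lambda_2$ case follows the same line with one fewer limit. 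I would derive a candidate height-two tower for computing $\spec$ on bounded operators with $\|A\| \leq 1$ by post-processing each output: intersect $\Gamma_{n_2, n_1}(T)$ with the closed half plane $F = \{\Im z \geq r/8\}$ and apply the inverse affine map $w \mapsto \tfrac{4}{r}(w - z_0 - \tfrac{ir}{2})$. Because $\spec(T) \cap F = \spec(\tilde A)$ and $\spec(T)$ sits at distance at least $r/8$ from $\partial F$, the outer Attouch--Wets limit commutes with the intersection, and the post-processed tower converges to $\spec(A)$, contradicting the lower bound for general bounded operators invoked in the proof of \cref{SCI_bounds1}.

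The main technical obstacle is ensuring that the intersection-and-affine post-processing produces a bona fide SCI tower: the outer limit commutes with the intersection thanks to the separation of $\spec(T)$ from $\partial F$, but at fixed $n_2$ the intermediate set $\lim_{n_1}\Gamma_{n_2, n_1}(T)$ could have points on or near $\partial F$ in an adversarial tower. I would sidestep this by intersecting at level one with a slightly shrinking half plane, e.g., $\{\Im z \geq r/8 + 2^{-n_1}\}$, so that the inner limit stabilizes; alternatively, I would run the adversary argument directly, observing that any pair of bounded operators indistinguishable by $\Lambda_j$-queries produces a pair of Hermitian pencils $(T(A), T(A'))$ indistinguishable by $\Lambda_j$-queries, with distinct spectra because shifting into the open upper half plane makes the map $\spec \mapsto \spec \cup \overline{\spec}$ injective.
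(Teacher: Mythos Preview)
Your block-anti-diagonal embedding is a natural idea and the Hermiticity check and the formula $\spec(T)=\spec(\tilde A)\cup\overline{\spec(\tilde A)}$ are correct. However, the route is genuinely different from the paper's, and the part you flag as the ``main technical obstacle'' is a real gap that your two suggested fixes do not close. For the shrinking half-plane trick, take any height-two tower with inner limit $L_{n_2}$ containing a point on $\{\Im z=r/8\}$; for instance $\Gamma_{n_2,n_1}(T)=\{(r/8+(-1)^{n_1}2^{-n_1/2})\,i\}$ converges to $\{(r/8)i\}$, yet its intersection with $\{\Im z\geq r/8+2^{-n_1}\}$ oscillates between a singleton and $\emptyset$, so the post-processed inner limit fails to exist. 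Your second fix (``run the adversary argument directly'') is more promising---indistinguishable $A,A'$ do yield indistinguishable $T_A,T_{A'}$, and the map $S\mapsto S\cup\overline S$ is bi-Lipschitz for Hausdorff distance once $S$ sits in the upper half-plane---but this requires re-opening the specific adversarial constructions behind the cited lower bounds rather than treating them as black boxes, and you have not done that.

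The paper sidesteps the post-processing issue entirely by using \emph{constant} Hermitian pencils $T_A(z)\equiv A$ with $A$ self-adjoint, for which $\spec(T_A)$ is either $\emptyset$ or all of $U$. The problem then reduces to a binary decision: for $\Lambda_2$ the paper takes $A$ bounded diagonal and asks ``is $A$ invertible?'' (known $\notin\Delta_2^G$); for $\Lambda_1$ the paper proves a separate result (\cref{prop_embedding}) that ``is $0\in\spec(A)$?'' on self-adjoint $A$ is $\notin\Delta_3^G$. Because the target spectrum is all-or-nothing, the post-processing is trivial---just test whether the output meets a fixed ball $S_1(x_0)$---and no intersection-with-limits issue arises. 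Your construction attempts to transport the full spectral set; the paper only transports one bit, which is exactly enough for the lower bound and avoids the commutation problem you identified.
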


To prove this proposition, we embed a decision problem. Let $\OS$ denote the class of (linear) self-adjoint operators on $l^2(\mathbb{N})$ for which $\mathrm{span}\{e_n:n\in\mathbb{N}\}$ forms a core. Consider the following decision problem:
$$
\Xi_0:A \rightarrow\text{``Is $0\in\spec(A)$?''}\quad A\in\OS.
$$
The following proposition classifies the difficulty of this problem. To facilitate the proof of \cref{hermitian_still hard}, we treat the problem function $\Xi_0$ mapping to the metric space $[0,1]$ instead of the discrete space $\{0,1\}$ typically used for decision problems.

\begin{proposition}\label{prop_embedding}
$\{\Xi_0,\OS,[0,1],\Lambda_1\}\not\in\Delta_3^G$.
\end{proposition}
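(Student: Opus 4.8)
The plan is to show that $\{\Xi_0,\OS,[0,1],\Lambda_1\}$ cannot be solved with two limits by exhibiting the standard obstruction for the SCI hierarchy: a ``breakdown sequence'' of inputs witnessing that no height-$2$ tower of general algorithms can converge. The cleanest route is to reduce a known hard problem to $\Xi_0$. Since the excerpt tells us the classical linear result (the spectrum of a bounded self-adjoint operator on $\ell^2(\mathbb{N})$ requires three limits, i.e. is not in $\Delta_3^G$, and more precisely the decision problem ``is $0$ in the spectrum?'' is not in $\Sigma_2^G$ in general), I would first recall that for self-adjoint $A$ the quantity $\dist(0,\spec(A)) = \sigma_{\mathrm{inf}}(A)$ is exactly what a $\Sigma_2$/$\Pi_2$-type tower would need to pin down, and that this is known to be impossible with only two limits for the class $\OS$ — this is essentially in~\cite{ben2015can,colbrook2020PhD} and I would cite it as the input fact. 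The subtlety is that $\Xi_0$ maps into $[0,1]$ rather than $\{0,1\}$; I would simply note that exact computation of $\Xi_0$ as a $\{0,1\}$-valued map is at least as hard, and that a height-$2$ tower converging in $[0,1]$ to the indicator of $\{0\in\spec(A)\}$ would, by thresholding, decide membership — so the $[0,1]$ formulation does not make the problem easier.

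Concretely, the key steps, in order, are as follows. First, I would set up the contradiction hypothesis: suppose $\{\Gamma_{n_2,n_1}\}$ is a height-$2$ general tower with $\lim_{n_2}\lim_{n_1}\Gamma_{n_2,n_1}(A) = \Xi_0(A)$ for all $A\in\OS$. Second, I would recall the classical construction of a family of diagonal (or block-diagonal) self-adjoint operators parametrised by a tree of binary choices, whose spectrum's relation to $0$ encodes whether a doubly-quantified arithmetical statement $\forall k\,\exists \ell\, P(k,\ell)$ holds — this is the device used to prove $\Xi_0 \notin \Sigma_2^G$ (equivalently $\notin \Pi_2^G$) in the linear theory. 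Third, using the finiteness of $\Lambda_{\Gamma}(A)$ for each level-$1$ algorithm, I would run the standard diagonal/adversary argument: given the purported tower, one builds an input $A$ on which the two successive limits disagree with $\Xi_0(A)$, exploiting that after fixing finitely many matrix entries the adversary is still free to steer the ``deep'' part of the operator. Fourth, I would package this as: no such tower exists, hence $\{\Xi_0,\OS,[0,1],\Lambda_1\}\notin\Delta_3^G$ (noting $\Delta_3^G \supsetneq \Sigma_2^G\cup\Pi_2^G$, so it suffices to rule out membership in $\Delta_3^G$, which is exactly the $\leq 2$-limit class).

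I expect the main obstacle to be importing the classical lower bound at the right granularity. The literature result I want is typically stated for $\spec$ itself (as a set, in the Hausdorff or Attouch--Wets metric) or for a decision problem phrased over a slightly different input class or evaluation set than $\OS$ with $\Lambda_1$; so the real work is a careful but routine bookkeeping argument that (a) the matrix-element evaluation set $\Lambda_1$ here coincides with (or is no richer than) the one used in the cited lower bound, (b) the core condition defining $\OS$ is compatible with the diagonal operators used in the hard instances (it is, since diagonal operators with the standard basis trivially have $\mathrm{span}\{e_n\}$ as a core), and (c) deciding $0\in\spec(A)$ for such operators is genuinely the two-quantifier problem, not something collapsing to one limit. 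If a direct citation covering exactly this statement is available, the proof is short; otherwise I would reproduce the adversary construction with diagonal operators $A = \diag(a_1,a_2,\ldots)$ where the tail entries are chosen, after the algorithm commits to finitely many queries, either to place a spectral point arbitrarily close to $0$ or to bound it away, forcing oscillation of $\lim_{n_1}\Gamma_{n_2,n_1}$ across $n_2$ and thereby contradicting convergence in $[0,1]$.

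I would then remark that this immediately feeds into \cref{hermitian_still hard}: one embeds $A\in\OS$ into a Hermitian pencil (e.g. via $T(z)$ built from $A$ so that $0\in\spec(A)$ is detectable from $\spec(T)$ near a fixed point of $U$), so that a height-$2$ (resp. height-$1$ under $\Lambda_2$) tower for the Hermitian spectral problem would decide $\Xi_0$, contradicting \cref{prop_embedding}.
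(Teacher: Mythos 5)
Your proposal has the right general shape (reduce a known hard decision problem to $\Xi_0$ via a block--diagonal self-adjoint operator, then argue by contradiction), but the hardness source you pick is at the wrong level of the arithmetic hierarchy, and this is a genuine gap, not bookkeeping. You propose to encode a statement of the form $\forall k\,\exists\ell\,P(k,\ell)$, i.e.\ a $\Pi_2^0$ statement; but every $\Pi_2^0$ decision problem over $\{0,1\}$-sequences \emph{is} solvable by a two-limit tower of general algorithms (indeed it sits in $\Pi_2^G\subset\Delta_3^G$), so such a reduction cannot possibly rule out $\Delta_3^G$. Similarly, the ``adversary steers the deep tail after the algorithm commits'' argument you sketch is the standard single-level obstruction, which at best yields $\notin\Delta_2^G$; defeating a \emph{two}-limit tower requires either a nested two-level adversary or, as the paper does, a reduction from a problem already known to lie strictly above two limits. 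You also slightly misstate the reduction principle: a tower converging in $[0,1]$ to a $\{0,1\}$-valued function cannot in general be ``thresholded'' into a valid $\{0,1\}$-tower because the inner limits after thresholding need not exist when an intermediate limit equals $1/2$; the paper keeps the target $[0,1]$ precisely so it does not need to worry about this.

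What the paper actually does: it introduces the problem
$\Xi_{2,Q}(\{a_{i,j}\})=1$ iff $\sum_i a_{i,j}=\infty$ for all but finitely many $j$, acting on the class $\Omega'$ of $\{0,1\}$-matrices, and quotes the result $\{\Xi_{2,Q},\Omega',[0,1],\Lambda'\}\notin\Delta_3^G$ from \cite{colbrook4}. This problem has quantifier prefix $\exists\,\forall\,\forall\,\exists$, hence is $\Sigma_3^0$-hard, which is exactly what is needed to defeat two limits (by the correspondence between the SCI and arithmetic hierarchies). It then builds an explicit block operator
$A(\{a_{i,j}\})=\bigoplus_j\bigl[C(\{a_{i,j}\}_i)-(1-2^{-(1+j)})I\bigr]$
out of finite blocks $A_m$ (with ones in the four corners) and a single infinite block $A_\infty=\diag(1,0,0,\ldots)$, arranged so that $1\in\spec(C(\{c_i\}))$ iff $\{i:c_i=1\}$ is finite, and then shows $0\in\spec(A)\Leftrightarrow\Xi_{2,Q}=0$. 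Composing a hypothetical $\Delta_3^G$ tower for $\Xi_0$ with this map (and the identification of $\Lambda'$-evaluations with $\Lambda_1$-evaluations of $A$) yields a $\Delta_3^G$ tower for $\Xi_{2,Q}$, contradicting \cite{colbrook4}. To repair your proof you would need to (i) replace the $\forall\exists$ source with a $\Sigma_3$- or $\Pi_3$-hard one (such as $\Xi_{2,Q}$, or ``cofinitely many rows are eventually zero'', etc.), (ii) exhibit a concrete block-diagonal self-adjoint $A$ in $\OS$ whose spectrum detects that property at $0$, and (iii) drop the hand-rolled adversary in favour of the reduction, or else carry out a genuinely two-level adversary argument, which is considerably more delicate than the sketch you give.
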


\begin{proof}
Let $\Omega'$ denote the collection of all infinite matrices $a=\{a_{i,j}\}_{i,j\in\mathbb{N}}$ with entries $a_{i,j}\in\{0,1\}$. Define the problem function
$$
\Xi_{2,Q}(\{a_{i,j}\})=\begin{cases}
1,\quad&\text{if }\sum_{i}a_{i,j}=\infty\text{ for all but finitely many $j$},\\
0,\quad&\text{otherwise}.
\end{cases}
$$
In \cite{colbrook4}, it was shown through connections with descriptive set theory that $\{\Xi_{2,Q},\Omega',[0,1],\Lambda'\}\notin\Delta_3^G$, where $\Lambda'$ is the set of component-wise evaluations of $\{a_{i,j}\}$.

Suppose for a contradiction that $\{\Gamma_{n_2,n_1}\}$ is a $\Delta_3^G$-tower of algorithms for $\{\Xi_0,\OS,[0,1],\Lambda_1\}$. Given a sequence $\{c_i\}_{i\in\mathbb{N}}$ with each $c_i\in\{0,1\}$, list the indices where $c_i=1$ as
$$
\{i:c_i=1\}=\{i_1,i_2,\ldots\},\quad i_1\leq i_2\leq\cdots.
$$
This set may be empty, finite, or countably infinite. Define $i_0=0$ and let $r(\{c_i\})$ be one greater than the cardinality of $\{i:c_i=1\}$. If $r(\{c_i\})<\infty$, then we define
$$
l_k=2+i_k-i_{k-1},\quad k=1,\ldots,r(\{c_i\})-1,\quad l_{r(\{c_i\})}=\infty,
$$
otherwise we define
$$
l_k=2+i_k-i_{k-1},\quad k\in\mathbb{N}.
$$
We set\index{hidden corner entries!essential spectrum}
$$
C(\{c_i\})=\bigoplus_{k=1}^{r(\{c_i\})} A_{l_k},\quad A_{m}=\begin{pmatrix}
1& & & &1\\
 &0& & & \\
& &\ddots& & \\
 & & &0& \\
1& & & &1\\
\end{pmatrix}
\in\mathbb{C}^{m\times m},
$$
where $A_{\infty}=\diag(1,0,0,\ldots)$. Note that $\spec(C(\{c_i\}))\subset \{0,1,2\}$ and $1\in \spec(C(\{c_i\}))$ if and only if $r(\{c_i\})<\infty$. Given $\{a_{i,j}\}\in\Omega'$, define the self-adjoint operator
$$
A=A(\{a_{i,j}\})=\bigoplus_{j=1}^{\infty}\left[C(\{a_{i,j}\}_{i\in\mathbb{N}})- (1-2^{-(1+j)})I\right]\in\OS.
$$
If $\Xi_{2,Q}(\{a_{i,j}\})=1$, then $0\notin\spec(A)$.
If $\Xi_{2,Q}(\{a_{i,j}\})=0$, there exists a null sequence in $\spec(A)$ and, hence, $0\in\spec(A)$.
We define the following general algorithm that uses $\Lambda'$:
$$
\widehat\Gamma_{n_2,n_1}(\{a_{i,j}\})=1-\Gamma_{n_2,n_1}(A(\{a_{i,j}\})).
$$
The convergence of $\Gamma_{n_2,n_1}$ implies that
$$
\lim_{n_2\rightarrow\infty}\lim_{n_1\rightarrow\infty}\widehat\Gamma_{n_2,n_1}(\{a_{i,j}\})=\Xi_{2,Q}(\{a_{i,j}\}).
$$
However, this contradicts $\{\Xi_{2,Q},\Omega',[0,1],\Lambda'\}\notin\Delta_3^G$.
\end{proof}

\begin{proof}[Proof of \cref{hermitian_still hard}]
Suppose for a contradiction that $\{\spec,\Omega_{\mathrm{NL}}^{U,H},\MAW,\Lambda_2\}\in \Delta_2^G$ with a $\Delta_2^G$-tower of algorithms $\{\Gamma_n\}$. Consider the class $\OD$ of bounded diagonal self-adjoint operators on $l^2(\mathbb{N})$. For any $A\in \OD$, we may define a constant pencil $T_A\in\Omega_{\mathrm{NL}}^{U,H}$ by setting
$
\langle T_A(z)e_j,\hat{e}_i \rangle_{\mathcal{H}_2}=A_{ij},
$
where $A_{ij}$ is the $(i,j)$th matrix entry of $A$ with respect to the canonical basis of $l^2(\mathbb{N})$. Moreover, for this class of diagonal operators, $\Lambda_2$ is equivalent to the evaluation of each matrix entry $A_{ij}$. With this setup, it is easy to see that the decision problem of deciding whether $A$ is invertible does not lie in $\Delta_2^G$. We have $\spec(T_A)=\emptyset$ if $A$ is invertible and $\spec(T_A)=U$ otherwise. We then set
$
\tilde{\Gamma}_n(A)=1
$
if $\Gamma_n(T_A)\cap S_1(x_0)\neq\emptyset$ and $\tilde{\Gamma}_n(A)=0$ otherwise. Then $\{\tilde{\Gamma}_n\}$ is a $\Delta_2^G$-tower for deciding if elements of $\OD$ are invertible, a contradiction.

The proof that $\{\spec,\Omega_{\mathrm{NL}}^{U,H},\MAW,\Lambda_1\}\not\in \Delta_3^G$ is similar, except we now use \cref{prop_embedding}. Given $A\in \OS$, we can identity it with a constant pencil $T_A\in\Omega_{\mathrm{NL}}^{U,H}$ exactly as before. Suppose for a contradiction that $\{\Gamma_{n_2,n_1}\}$ is a $\Delta_3^G$-tower for $\{\spec,\Omega_{\mathrm{NL}}^{U,H},\MAW,\Lambda_1\}$. We now set
$$
a_{n_2,n_1}(A)=\inf_{z\in\Gamma_{n_2,n_1}(A)}\min\{d_U(z,x_0),1\},
$$
where the infimum over the empty set is taken to be $1$, and define
$$
\tilde{\Gamma}_{n_2,n_1}(A)=1-a_{n_2,n_1}(A).
$$
If $\Xi_0(A)=1$, then $\lim_{n_2\rightarrow\infty}\lim_{n_1\rightarrow\infty}a_{n_2,n_1}(A)=0$. On the other hand, if $\Xi_0(A)=0$, then $\lim_{n_2\rightarrow\infty}\lim_{n_1\rightarrow\infty}a_{n_2,n_1}(A)=1$. Hence, $\{\tilde{\Gamma}_{n_2,n_1}\}$ is a $\Delta_3^G$-tower for $\{\Xi_0,\OS,[0,1],\Lambda_1\}$, which contradicts \cref{prop_embedding}.
\end{proof}

\begin{remark}
The proof of \cref{hermitian_still hard} can easily be adapted in various ways—for instance, to using non-constant holomorphic families of type (A).
\end{remark}

\section{Computational examples}
\label{sec:examples}

We now provide several examples demonstrating the convergence of our methods, generalizations of the original setup, and the non-convergence of standard truncation methods, such as the finite section method. The code for all examples can be found at: \url{https://github.com/MColbrook/NL_spectra}.

\subsection{Nonlinear shifts}

Let $S$ be the bilateral shift $Se_j=e_{j+1}$ acting on $l^2(\mathbb{Z})$, where $\{e_n\}_{n\in\mathbb{Z}}$ are the canonical basis vectors. Let $U=\mathbb{C}$, $f:\mathbb{C}\rightarrow\mathbb{C}$ be continuous and set
$$
T(z)=S-f(z)S^*.
$$
We may write
$
T(z)=(S^2-f(z)I)S^*
$
and, hence,
$$
\spec(T)=\{z:f(z)\in\spec(S^2)\}=\{z:|f(z)|=1\}.
$$
However, if we form finite sections of $T$, we obtain a completely different (and wrong) spectrum. Let $\mathcal{P}^{\mathbb{Z}}_n$ denote the orthogonal projection onto $\mathrm{span}\{e_{-n},\ldots,e_n\}$ and set
$$
T_{2n+1}(z)=\mathcal{P}^{\mathbb{Z}}_n(S-f(z)S^*){\mathcal{P}^{\mathbb{Z}}_n}^*\in\mathbb{C}^{2n+1\times 2n+1}.
$$
Let $u_{2n+1}=((-1)^n[f(z)]^n,0,(-1)^{n-1}[f(z)]^{n-1},0,\ldots,-f(z),0,1)^\top$, then $T_{2n+1}(z)u_{2n+1}=0$ and, hence, $\spec(T_{2n+1})=\mathbb{C}$. Similarly, we could take an truncation of even dimension, in which case the corresponding Toeplitz matrix $T_{2n}(z)$ is invertible if and only if $f(z)\neq 0$ so that $\spec(T_{2n})=\{z\in\mathbb{C}:f(z)=0\}$. For example, \cref{fig:nonlinear_shift0} shows the pseudospectra of a $40\times 40$ truncation when $f(z)=\sin(4z)(|z|^2+1)$.

In contrast, we use rectangular truncations to form the approximations
$$
\gamma_{n}(z,T)=\min\left\{\sigma_{\mathrm{inf}}(\mathcal{P}^{\mathbb{Z}}_{n+1}T(z){\mathcal{P}^{\mathbb{Z}}_n}^*),\sigma_{\mathrm{inf}}(\mathcal{P}^{\mathbb{Z}}_{n+1}T(z)^*{\mathcal{P}^{\mathbb{Z}}_n}^*)\right\}.
$$
This leads to convergent $\Sigma_1^A$ and $\Pi_2^A$ for pseudospectra and the spectrum, respectively. For example, \cref{fig:nonlinear_shift} shows the approximation of pseudospectra when $f(z)=\sin(4z)(|z|^2+1)$ using $n=100$. (We can just as well take rectangular matrices with an even number of columns.) The spectrum is an infinite sequence of rings centred on the real axis.

\begin{figure}
\centering
\includegraphics[width=1\linewidth]{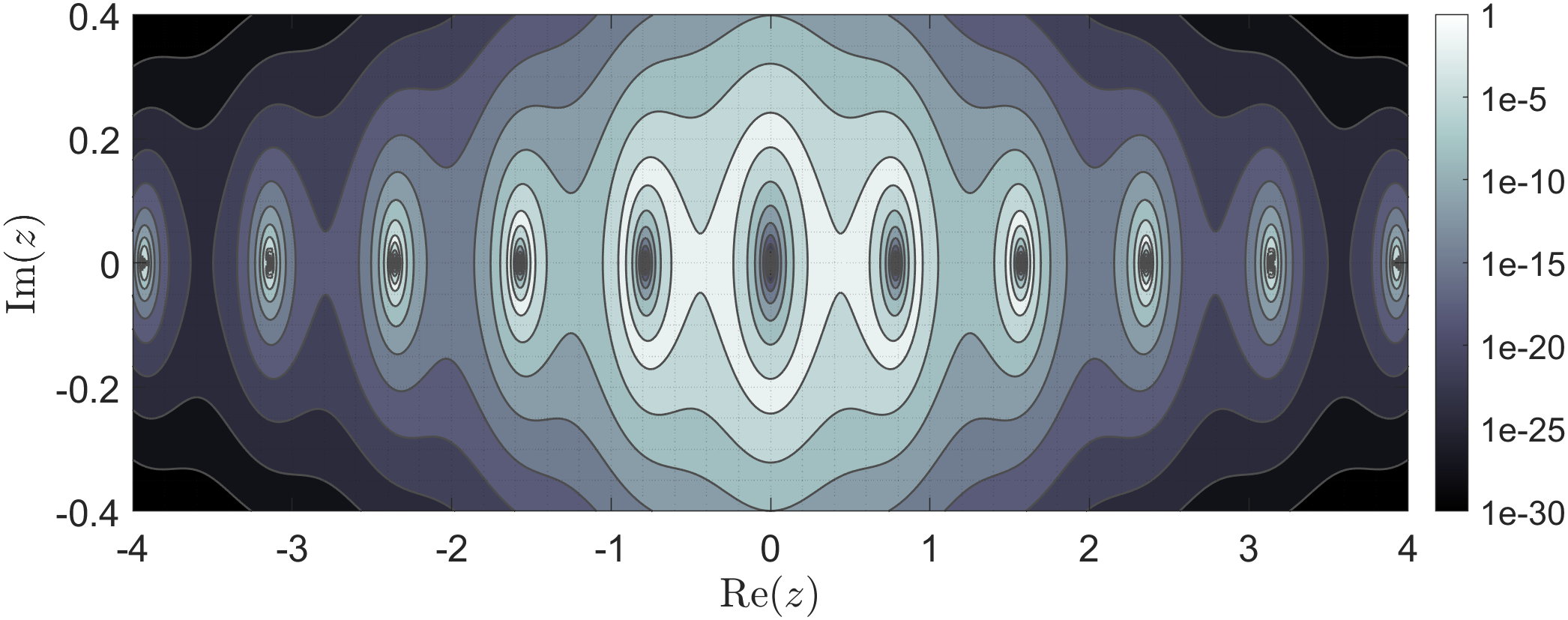}
\caption{Pseudosepctra of $40\times 40$ truncation of the nonlinear shift $S-f(z)S^*$ for $f(z)=\sin(4z)(|z|^2+1)$. The color corresponds to a logarithmic grid of $\epsilon$ values. The spectrum of the truncated problem is $\{\pi m/4:m\in\mathbb{Z}\}$, which is completely different to the spectrum $\spec(S-f(z)S^*)=\{z\in\mathbb{C}:|f(z)|=1\}$.\label{fig:nonlinear_shift0}}\vspace{2mm}
\includegraphics[width=1\linewidth]{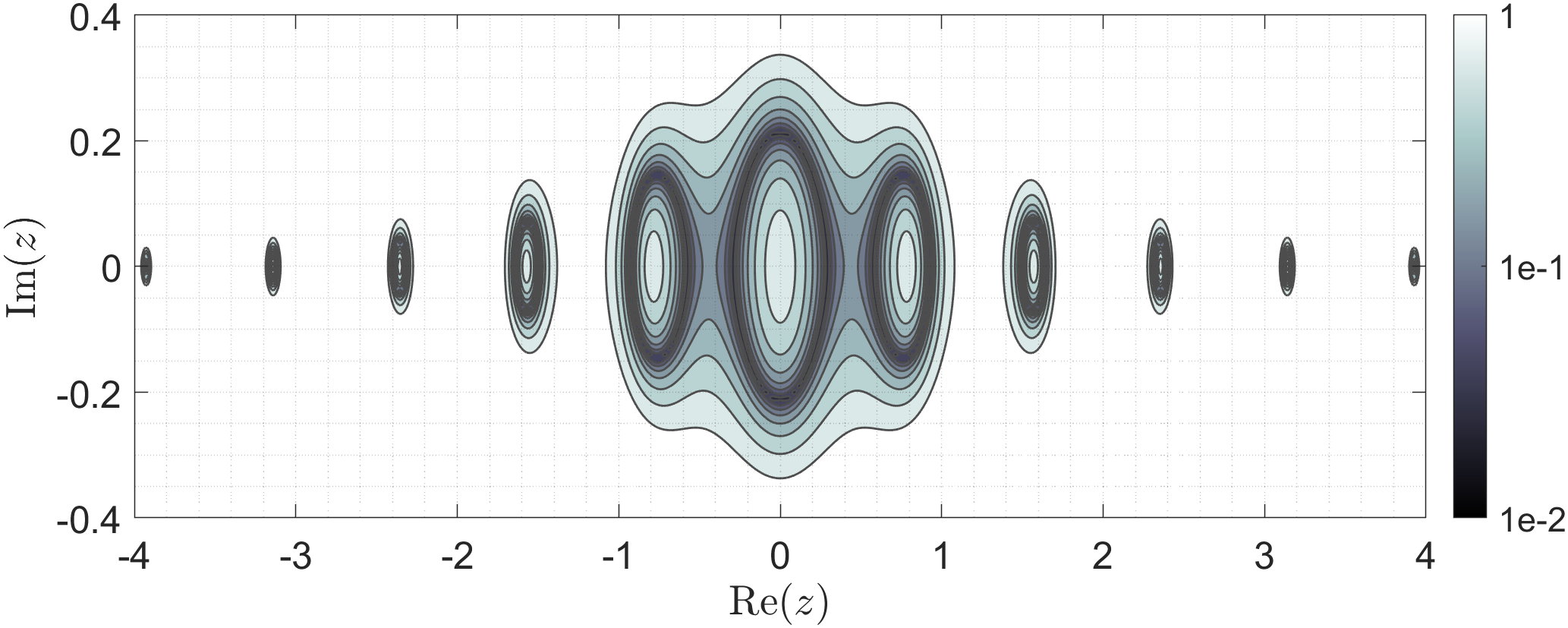}
\caption{Approximation of pseudospectra for the nonlinear shift example using \cref{alg1} and rectangular truncations to obtain a $\Sigma_1^A$ tower. The color corresponds to values of $\epsilon$ and we show contour levels that display the boundaries of $\spec_\epsilon(T)$ for a logarithmic grid of $\epsilon$ values. In contrast to the finite section method in \cref{fig:nonlinear_shift0}, this method converges.}
\label{fig:nonlinear_shift}
\end{figure}

\subsection{Klein--Gordon equation}

In quantum mechanics, the Klein--Gordon equation,
$$
\left(\left[\frac{\partial}{\partial t}-ieq\right]^2-\nabla^2 +m^2\right)\psi=0,
$$
describes the motion of a relativistic spinless particle of mass $m$ and charge $e$ in an electrostatic field with potential $q$. Here, $\psi(x,t)$ is a wavefunction with $x\in\mathbb{R}^d$. If we seek separable solutions $\psi(x,t)=e^{i\lambda t}f(x)$, we are led to $M(\lambda)f=0$ for the nonlinear pencil
$$
M(\lambda) = -\nabla^2 +m^2I- (eq-\lambda )^2I.
$$
More generally, on a Hilbert space $\mathcal{H}$, we can consider a nonnegative operator $H_0$ (instead of $-\nabla^2 +m^2I$) and a self-adjoint operator $V$ (instead of $eqI$), which we take to be bounded for simplicity, leading to the pencil
$$
T(\lambda) = H_0 - (V-\lambda I)^2.
$$
There is considerable interest in the spectral and scattering properties of such Klein--Gordon equations; see \cite{langer2006spectral,lundberg1973spectral,schiff1940existence,weder1978scattering,langer2008spectral} and the references therein for a very small sample. See also \cite{rosler2022computing} for SCI classifications of computing spectra in the case of $H_0=-\nabla^2 +m^2I$, which take advantage of the extra structure of the problem.

As a concrete example, we consider the Hilbert space $\mathcal{H}=l^2(\mathbb{Z})$,
$$
H_0=\begin{pmatrix}
\ddots& \ddots& \ddots& &&\\
 &\frac{3}{2}&2&\frac{1}{2} & & &\\
 &&\frac{1}{2} &|2|& \frac{3}{2}& &\\
 && & \frac{3}{2}&2& \frac{1}{2}&\\
&&&&\ddots&\ddots&\ddots\\
\end{pmatrix},\quad\text{and}\quad [Vx]_n=-5\exp(-|n|)x_n,\quad x\in l^2(\mathbb{Z}),n\in\mathbb{Z}.
$$
Here, $|2|$ signifies the $(0,0)$th entry of $H_0$, and the $\ddots$ in the subdiagonal and superdiagonal indicate a periodic repetition with a period of two. The essential spectrum of $H_0$ is $[0,1]\cup[3,4]$. Since $V$ is compact, the essential spectrum of $T$ is $[-2,-\sqrt{3}]\cup[-1,1]\cup[\sqrt{3},2]$. The rest of the spectrum consists of discrete eigenvalues.

\begin{figure}
\centering
\includegraphics[width=1\linewidth]{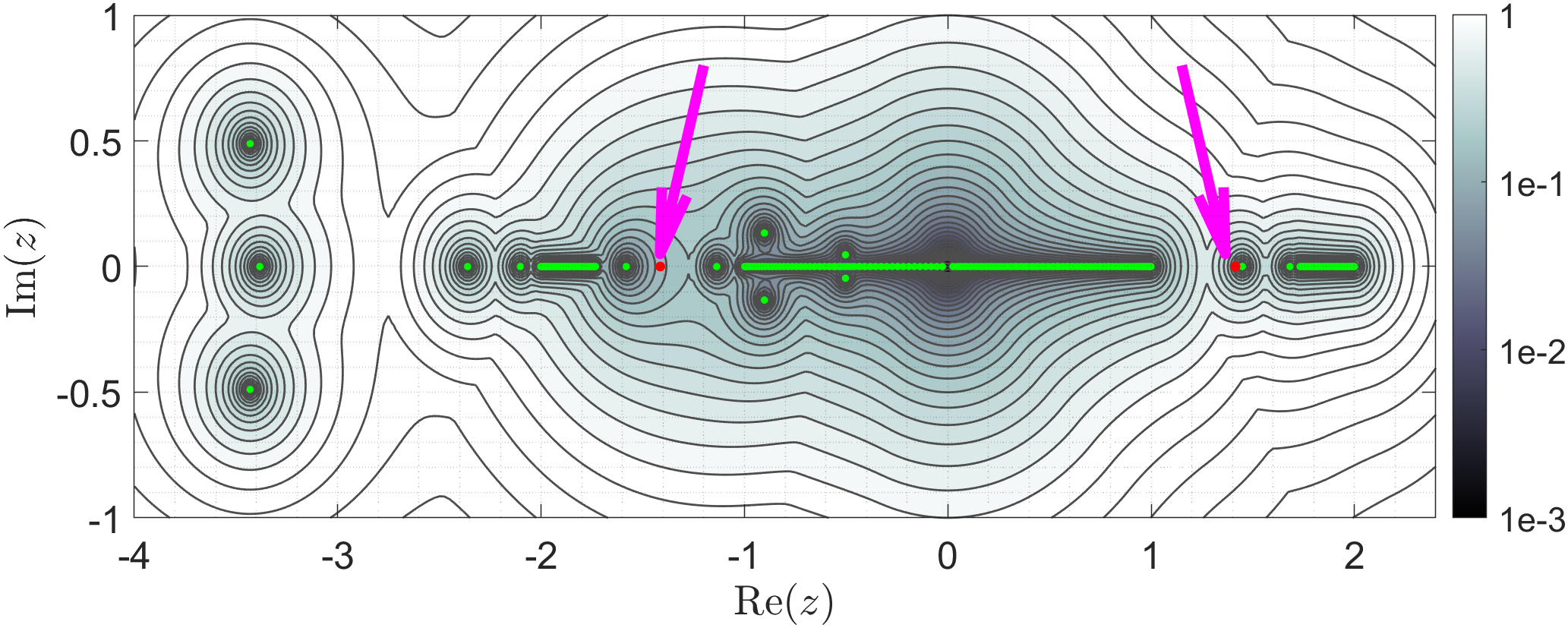}
\caption{Output of \cref{alg1} for the Klein--Gordon spectral problem. The green and red dots correspond to the eigenvalues of the quadratic eigenvalue problem resulting from truncation. The two eigenvalues shown in red are spurious and are highlighted by the arrows.}
\label{fig:KG1}
\end{figure}

\cref{fig:KG1} shows the output of \cref{alg1}, where we approximate the injection modulus using
$$
\gamma_{n}(z,T)=\min\left\{\sigma_{\mathrm{inf}}(\mathcal{P}^{\mathbb{Z}}_{n+1}T(z){\mathcal{P}^{\mathbb{Z}}_n}^*),\sigma_{\mathrm{inf}}(\mathcal{P}^{\mathbb{Z}}_{n+1}T(z)^*{\mathcal{P}^{\mathbb{Z}}_n}^*)\right\}.
$$
with $n=100$. We have also shown the eigenvalues of the quadratic eigenvalue problem $\mathcal{P}^{\mathbb{Z}}_{n}T(z){\mathcal{P}^{\mathbb{Z}}_n}^*$ as green and red dots. The red dots, highlighted with arrows, show spectral pollution in gaps of the essential spectrum. This demonstrates that naive truncation methods can fail and give misleading results, even for simple quadratic eigenvalue problems. We can also adapt \cref{alg2} to compute the discrete eigenvalues by searching for local minima of $\gamma_{n}(z,T)$. \cref{fig:KG2} shows the convergence of this approach for three representative eigenvalues, where we have shown $\gamma_{n}(z,T)$ which provides an \textit{upper bound} on $\|T(z)^{-1}\|^{-1}$. The convergence in this case is exponential in the truncation parameter $n$.

\begin{figure}
\centering
\includegraphics[width=0.5\linewidth]{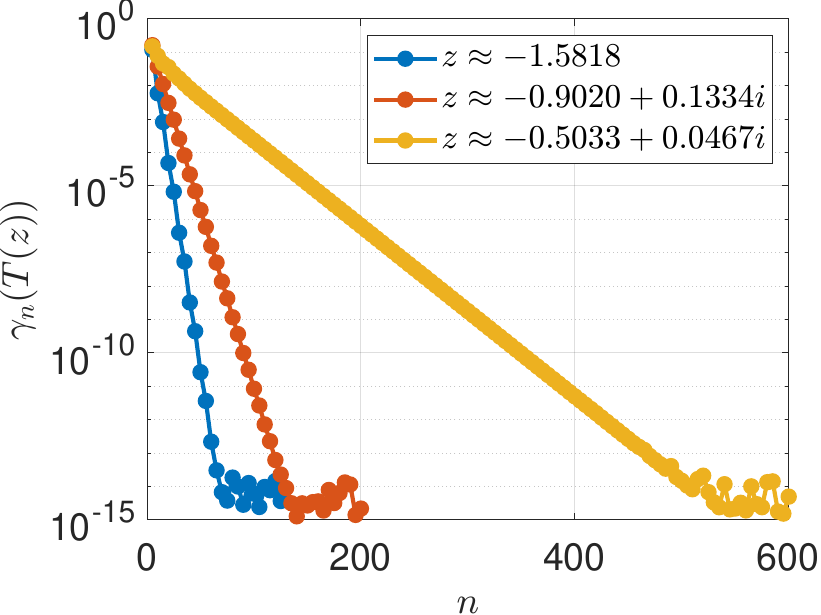}
\caption{Convergence of local minima of $\gamma_{n}(z,T)$ (essentially an adaptation of \cref{alg2}) for three representative eigenvalues in the discrete spectrum. The legend shows the (rounded) approximate location of the eigenvalues. The plateauing of the curves around $10^{-15}$ is due to reaching machine precision.}
\label{fig:KG2}
\end{figure}

\subsection{Wave equation with acoustic boundary condition}

We now consider the following equation:
\begin{equation}\label{inf_acoustic_wave}
\frac{d^2 p}{dx^2} +z^2 p=0 \quad x\in(0,\infty),\quad -\frac{d p}{dx}(0)+izp(0)=0.
\end{equation}
Here, $p$ represents acoustic pressure with an impedance boundary condition at $x=0$, and $z$ denotes the frequency, treated as the spectral parameter. Wave equations equipped with time-dependent acoustic boundary conditions are discussed in \cite{mugnolo2006abstract}. \Cref{inf_acoustic_wave} leads to a pencil $T(z)$ on $L^2((0,\infty))$ with a domain dependent on $z$. It can be shown that
$$
\spec(T)=\{z\in\mathbb{C}:\mathrm{Im}(z)\geq 0\}.
$$
A natural first step toward addressing such problems is truncating the domain to an interval $[0,L]$ for sufficiently large $L$, imposing the Dirichlet boundary condition $p(L)=0$. Introducing the scaled spectral parameter $\lambda = L z$ results in a spectrum independent of $L$. The resulting problem is typically discretized using finite element methods (FEMs) \cite{harari1996recent}, leading to a finite-dimensional quadratic eigenvalue problem \cite{chaitin2006analysis}. This domain-truncated and FEM-discretized problem also appears in the benchmark NLEVP collection \cite{betcke2013nlevp}.

\cref{fig:acoustic_wave_quad_evals} (left) displays eigenvalues of the truncated and discretized problem for four discretization sizes: $n=10$, $100$, $500$, and $1000$. These eigenvalues are computed using the \texttt{polyeig} command in MATLAB. The spectrum of the truncated problem (prior to discretization via FEM) is empty; therefore, all eigenvalues shown in \cref{fig:acoustic_wave_quad_evals} may be considered spurious. However, they do not constitute spectral pollution (with respect to the truncated problem), as they tend toward infinity—albeit logarithmically slowly—with the minimal absolute value of eigenvalues growing as $\mathcal{O}(\log(n))$, illustrated in \cref{fig:acoustic_wave_quad_evals} (right).

\begin{figure}
\centering
\includegraphics[width=0.495\linewidth]{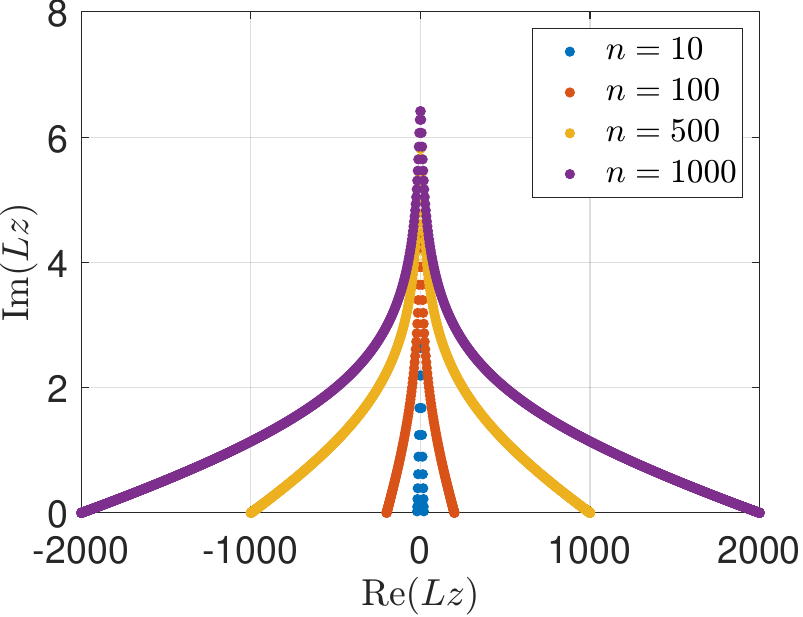}\hfill
\includegraphics[width=0.46\linewidth]{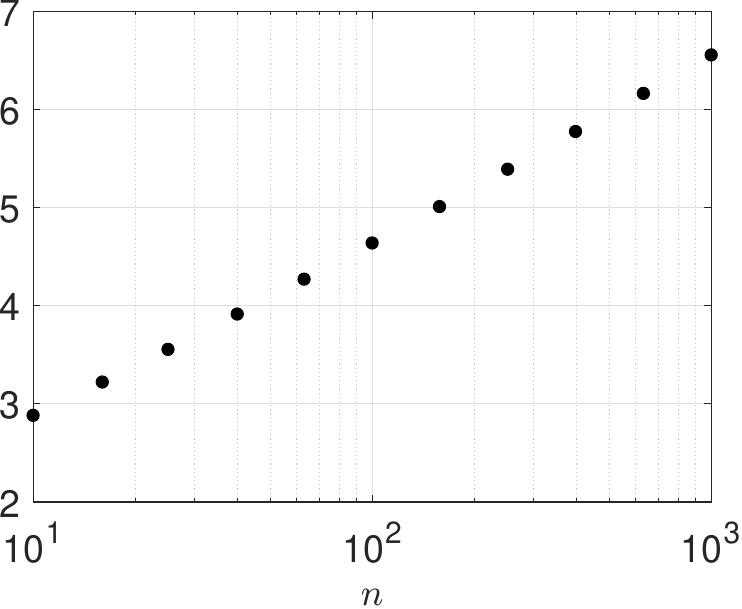}
\caption{Left: Eigenvalues of the quadratic eigenvalue problem after domain truncation followed by FEM discretization. Right: The minimum of the absolute value of the eigenvalues for different discretization sizes $n$.}
\label{fig:acoustic_wave_quad_evals}
\end{figure}

To apply \cref{alg1,alg2} and compute the spectral properties of $T$, we first consider the Laguerre functions
$$
\phi_n(x) = L_n(x)e^{-x/2},\quad L_n(x)=\frac{1}{n!}\left(\frac{d}{dx}-1\right)^n x^n\quad n=0,1,2,\ldots.
$$
The functions $\{\phi_n\}_{n=0}^\infty$ form an orthonormal basis of $L^2((0,\infty))$. However, they do not lie in the domain of $T(z)$. Therefore, we introduce the modified basis functions
$$
\hat{\phi}_n(x)=\phi_n(x)+\alpha_n \phi_0(x),\quad \alpha_n=-(2iz+2n+1)/(2iz+1),
\quad n\in\mathbb{N}.
$$
Since $\phi_n(0)=1$ and $\phi_n'(0)=-(n+1/2)$, these modified functions satisfy the given boundary conditions. These functions, however, are not orthogonal, and it is more convenient to use an orthogonal basis when computing injection moduli. Thus, we apply the Gram–Schmidt process (numerically implemented via a QR decomposition for numerical stability) to obtain an orthonormal basis $\{f_n\}_{n=1}^\infty$. This resulting orthonormal basis is dependent on $z$, illustrating a generalisation of our convergence results to cases where the bases used to represent the nonlinear pencil (in $\mathcal{H}_1$ and $\mathcal{H}_2$) may depend on the spectral parameter $z$. The convergence results in \cref{sec:basic_convergence} continue to hold under this relaxed assumption. Moreover, the matrix representation of $T$ in the basis $\{f_n\}_{n=1}^\infty$ has zero entries below the first subdiagonal (and similarly for an analogous discretisation of the adjoint). Hence, we can use rectangular truncations (setting $n_2=n_1+1$ in \cref{alg1}) to achieve $\Sigma_1^A$ convergence to pseudospectra and $\Pi_2^A$ convergence to the spectrum.

\Cref{fig:acoustic_wave_pseudospectra} illustrates the convergence of \cref{alg1} to pseudospectra using $N$ basis functions. \Cref{alg1} can also be employed to compute pseudoeigenfunctions. Given an approximation $\gamma_{N+1,N}(z,T)$ of $\|T(z)^{-1}\|^{-1}$, we compute right-singular vectors associated with the smallest singular value during the computation of injection moduli. For instance, we compute $u \in \mathrm{span}\{f_1,\ldots,f_N\}$ satisfying (up to controllable numerical errors)
$$
\|T(z)u\|/\|u\|=\sigma_{\mathrm{inf}}(T\mathcal{P}_N^*),
$$
where $\mathcal{P}_N$ denotes the orthogonal projection onto $\mathrm{span}\{f_1,\ldots,f_N\}$. \Cref{fig:acoustic_wave_efun} shows examples for $z=2\pi+i$, where we have normalised pointwise by the true eigenfunction $e^{izx}$. The convergence of the approximations are evident. Similar behaviour occurs for real $z$, where there are no normalisable eigenfunctions.

\begin{figure}
\centering
\includegraphics[width=0.32\linewidth]{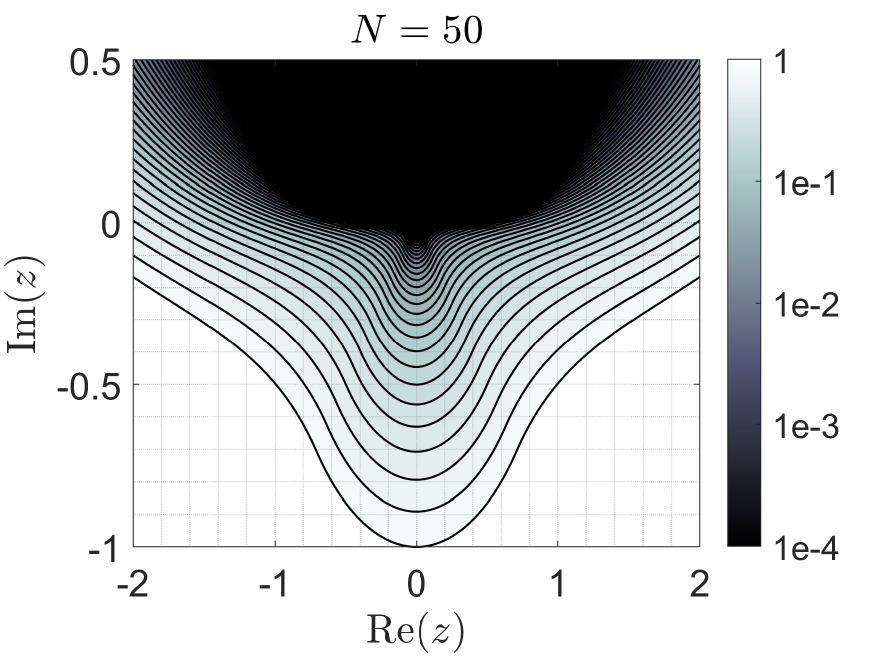}
\includegraphics[width=0.32\linewidth]{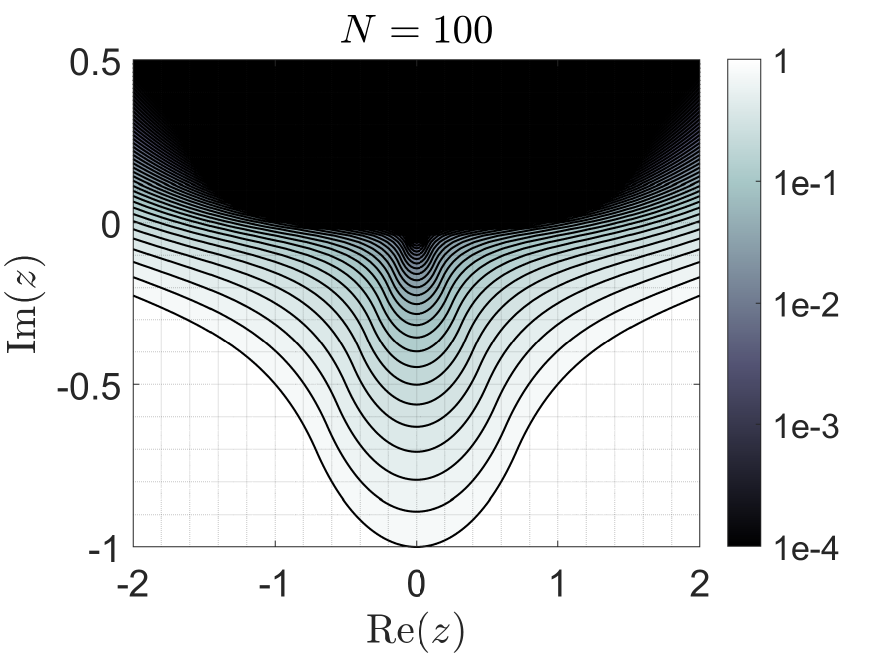}
\includegraphics[width=0.32\linewidth]{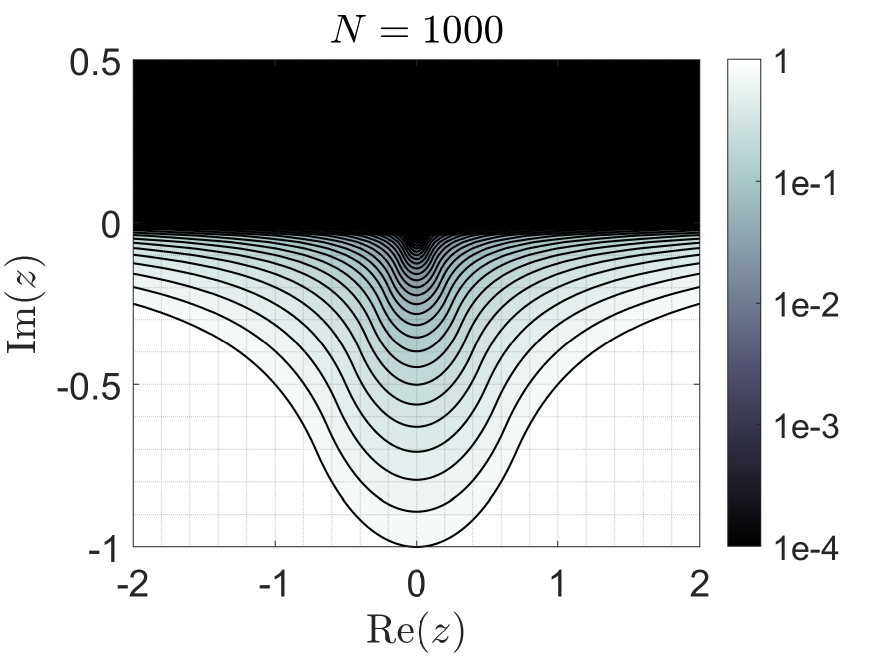}
\caption{Convergence of \cref{alg1} (as we move from left to right) for the wave equation example with acoustic boundary condition. In this case, the spectrum is $\{z\in\mathbb{C}:\mathrm{Im}(z)\geq 0\}$.}
\label{fig:acoustic_wave_pseudospectra}
\end{figure}

\begin{figure}
\centering
\includegraphics[width=0.32\linewidth]{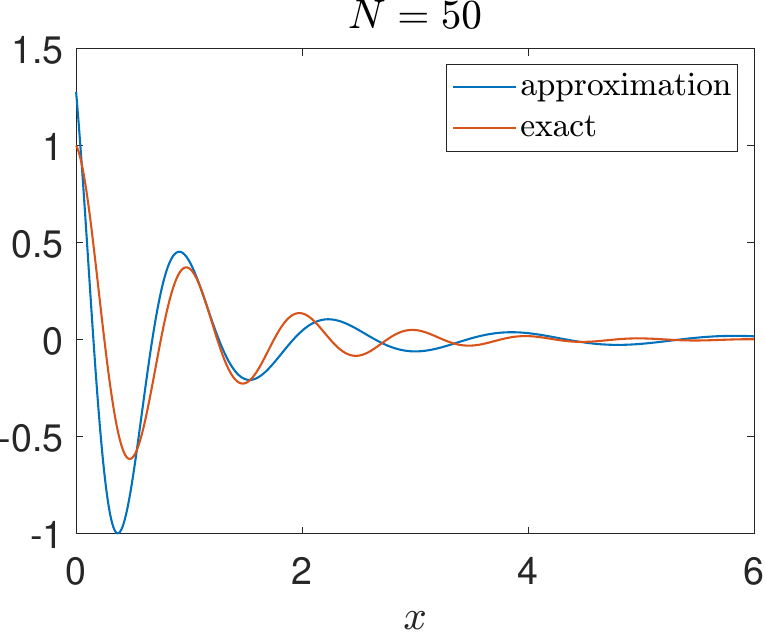}
\includegraphics[width=0.32\linewidth]{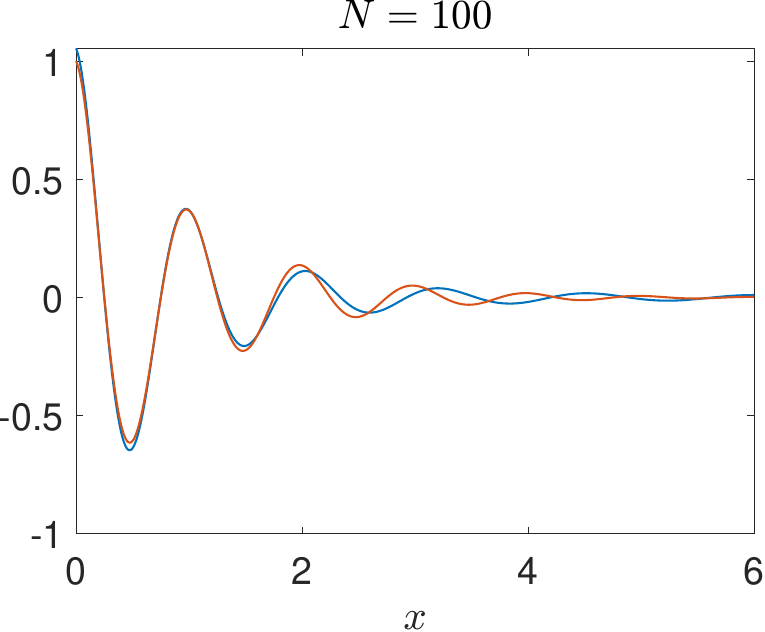}
\includegraphics[width=0.32\linewidth]{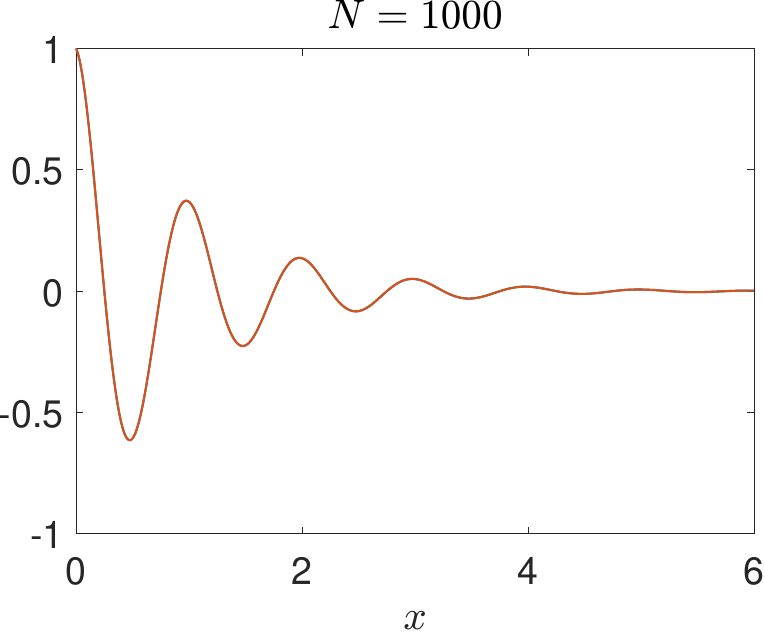}
\caption{Convergence of pseudoeigenfunctions for $z=2\pi+i$.}
\label{fig:acoustic_wave_efun}
\end{figure}

\subsection{Time-fractional viscoelastic beam equation}

Fractional differential equations are increasingly employed to model complex physical and biological systems \cite{hilfer2000applications,mainardi2010fractional,sabatier2007advances,magin2010fractional,metzler2000random}. However, their nonlocal nature introduces significant numerical challenges. Traditional methods—such as finite differences, finite elements, and convolution quadrature (see the overviews in \cite{baleanu2012fractional,li2015numerical})—often struggle with accuracy and efficiency, particularly over large domains. 
The Laplace transform provides a connection between the solutions of certain evolution equations and the spectral properties of operators \cite{arendt2001cauchy}. Contour methods based on Laplace transform inversion offer high accuracy and parallelisability for time-fractional differential equations, while avoiding the memory constraints and singularity resolution (as $t \downarrow 0$) required by time-stepping schemes \cite{trefethen2014exponentially,weideman2006optimizing,pang2016fast,mclean2010maximum,mclean2010numerical}. These methods also lend themselves to the computation of solutions to evolution equations on infinite-dimensional spaces \cite{colbrook2022computing,colbrook2022contour}.

As an example, we consider the following viscoelastic beam equation:
\begin{equation}
\begin{split}
    \pderiv{^{2}y}{t^{2}}+\pderiv{^{2}}{x^{2}}\left(\left[a(x)+b(x)\mathcal{D}^{\nu}_{C,t} \right]\pderiv{^{2}y}{x^{2}}\right)=F(x,t),\quad (x,t)\in[-1,1]\times[0,\infty),\\
		y(\pm 1,t)=\pderiv{y}{x}(\pm 1,t)=0.
		\end{split}
    \label{eq:gov}
\end{equation}
Here, $y(x,t)$ is the transverse displacement of the beam, $F(x,t)$ a transverse loading, $a$ and $b$ are (for simplicity) smooth positive functions, and $\mathcal{D}^{\nu}_{C,t}$ is Caputo's fractional derivative \cite{caputo1967linear} with $\nu\in(0,2)$. The boundary conditions correspond to the beam being clamped at both endpoints. \cref{eq:gov} arises in the modeling of modern materials embedded with polymer structures or biomaterials, which exhibit both elastic and viscous behavior. These effects can be experimentally measured and fitted to constitutive stress-strain relationships \cite{Pritz}. Incorporating time-fractional derivatives in the constitutive law is a popular approach for accurately capturing experimental data with a small number of parameters \cite{KV1,KV_AIAA}.

The following formal steps are analysed in \cite{colbrook2022contour}, and we work in the Hilbert space $L^2([-1,1])$. Taking Laplace transforms $\hat{\cdot}$ of \cref{eq:gov}, we arrive at
$$
\left[z^2 I +\pderiv{^{2}}{x^{2}}\left(\left[a(x)+z^\nu b(x) \right]\pderiv{^{2}}{x^{2}}\right)\right]\hat{y}(x,z)=\hat{G}(x,z),
$$
where $\hat{G}$ can be written in terms of the Laplace transform of $F$ and initial conditions for $y$. Let $S$ denote the initial fractional pencil
$$
S(z)=z^2 I +\pderiv{^{2}}{x^{2}}\left(\left[a(x)+z^\nu b(x) \right]\pderiv{^{2}}{x^{2}}\right),\quad \mathcal{D}(S)=\{u\in H^4([-1,1]):u(\pm1)=u'(\pm1)=0 \}.
$$
We then take the closure $T(z)$ of $S(z)$ and invert the Laplace transform to obtain
\begin{equation}
\label{eq:inv_lap}
y(x,t)=\frac{1}{2\pi i}\int_{\omega -i\infty}^{\omega+i\infty}e^{zt}T(z)^{-1}\hat{G}(x,z)\, \mathrm{d}z,
\end{equation}
where $\omega \in \mathbb{R}$ is chosen so that the singularities of $T(z)^{-1} \hat{G}(x,z)$ lie to the left of the contour.
The key to making this a numerically competitive solution method is to deform the contour of integration so that a rapidly convergent quadrature rule may be applied. To choose a suitable contour, one must first bound the region containing the singularities of $\hat{y}(x,z)$—that is, the spectrum of $T$—and study $\|T(z)^{-1}\|$, i.e., the pseudospectrum of $T$. We do not discuss the procedure for selecting optimal contours, which can be found in \cite{colbrook2022computing,colbrook2022contour}. Overestimating $\|T(z)^{-1}\|$ can significantly degrade the convergence rate of the solution method. Hence, obtaining accurate estimates is vital. One can even use computed pseudospectra to devise contours \cite{guglielmi2020pseudospectral}.

In \cite[Theorem 3.4]{colbrook2022contour}, $\|T(z)^{-1}\|$ is bounded using a quasi-linearisation of the operator pencil (to handle the quadratic term) and a generalisation of the numerical range. Here, we take a direct approach. The advantage of this is that it yields sharper estimates on $\|T(z)^{-1}\|$.

To apply \cref{alg1,alg2} and compute the spectral properties of $T$, we first consider the Legendre polynomials
$$
P_n(x) = \frac{1}{2^nn!}\frac{d^n}{dx^n}(x^2-1)^n,\quad n=0,1,2,\ldots.
$$
These functions do not lie in the domain of $T(z)$. Therefore, we introduce the modified basis functions
$
(x^2-1)^2P_n(x)
$
to satisfy the given boundary conditions. We then apply the Gram–Schmidt process (numerically implemented via a QR decomposition for numerical stability) to obtain an orthonormal basis. The matrix representation of $T$ in this basis has a finite lower bandwidth. Hence, we can use rectangular truncations to achieve $\Sigma_1^A$ convergence to pseudospectra and $\Pi_2^A$ convergence to the spectrum.

We consider the choices
$$
a(x)= 1+\frac{\cos(x)}{2},\quad b(x) = \frac{3}{2} + \tanh(10x).
$$
\Cref{fig:viscoelastic1,fig:viscoelastic2,fig:viscoelastic3} show the pseudospectra computed using 100 basis functions for $\nu = 0.5$, $1$, and $1.5$, respectively. We also compare these with the numerical-range-type bounds from \cite[Theorem 3.4]{colbrook2022contour}. In all cases, computing the pseudospectra using \cref{alg1} yields significantly sharper estimates (as expected), and can therefore aid in the design of efficient numerical solvers.

\begin{figure}
\centering
\includegraphics[width=0.49\linewidth]{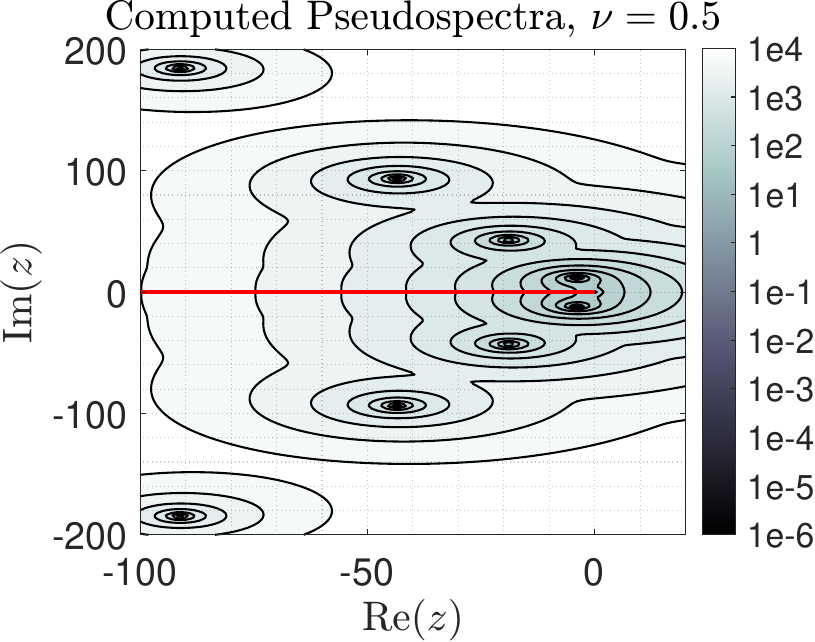}\hfill
\includegraphics[width=0.49\linewidth]{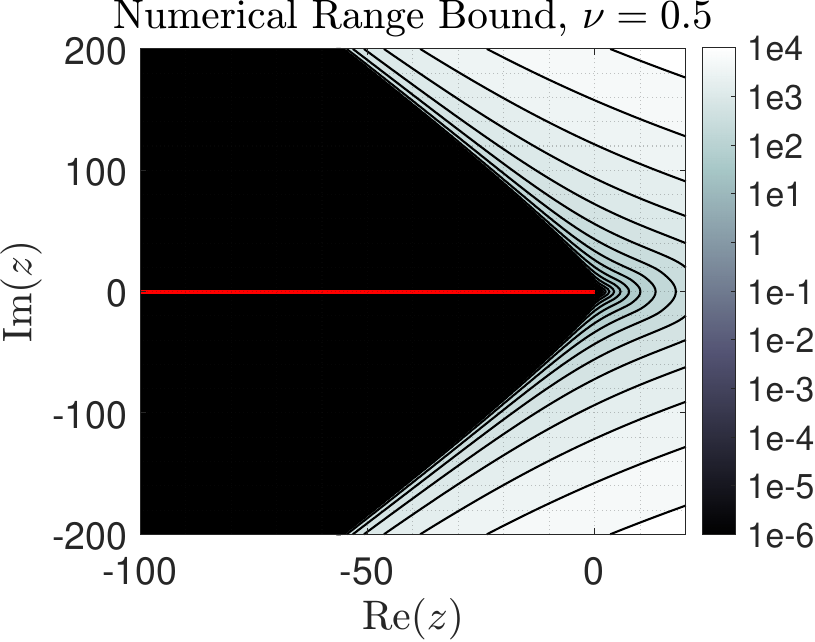}
\caption{Comparison of pseudospectra computed using \cref{alg1} (left) and the numerical range lower bounds on $\|T(z)^{-1}\|^{-1}$ (right). The branch cut for the non-integer power of $z$ is shown as a red line.}
\label{fig:viscoelastic1}
\end{figure}

\begin{figure}[t]
\centering
\includegraphics[width=0.49\linewidth]{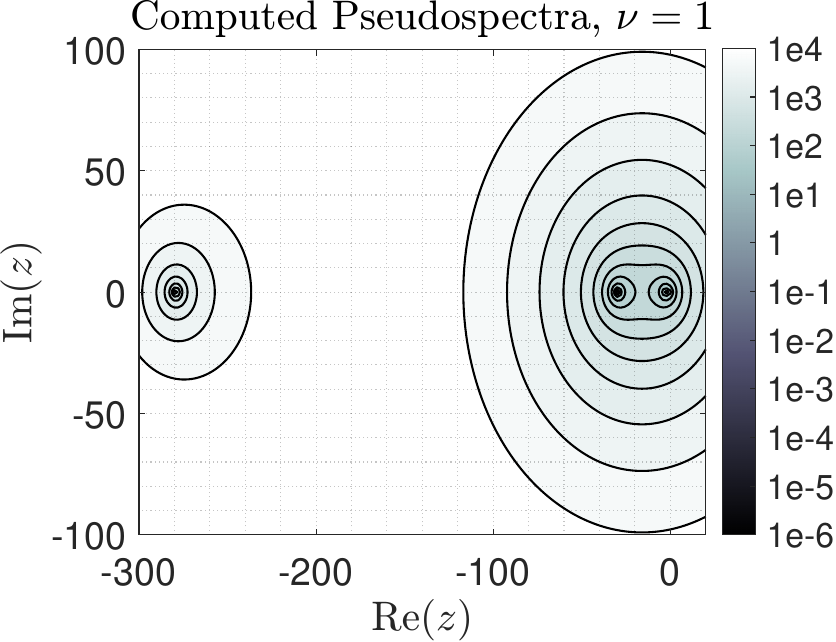}\hfill
\includegraphics[width=0.49\linewidth]{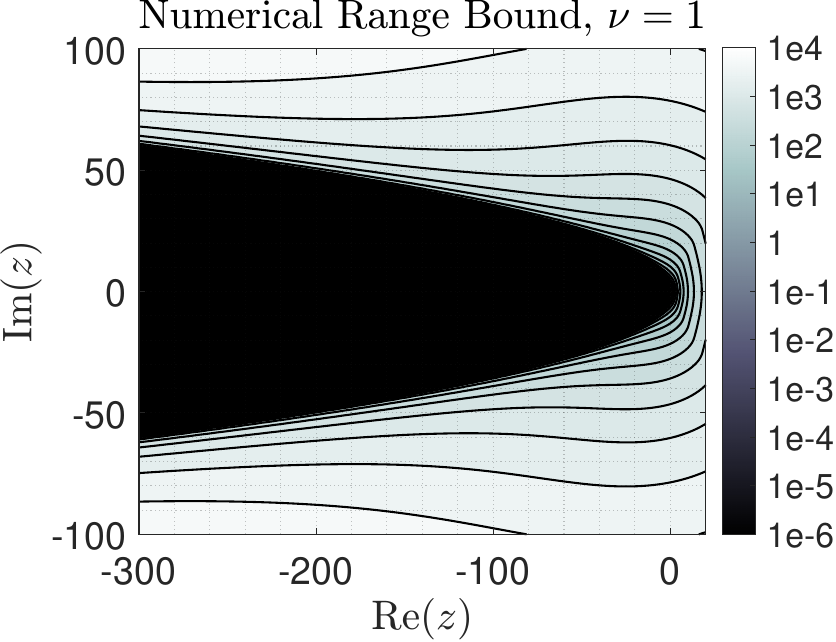}
\caption{Similar to \cref{fig:viscoelastic1}, but for $\nu=1$.}
\label{fig:viscoelastic2}
\end{figure}

\begin{figure}[t]
\centering
\includegraphics[width=0.49\linewidth]{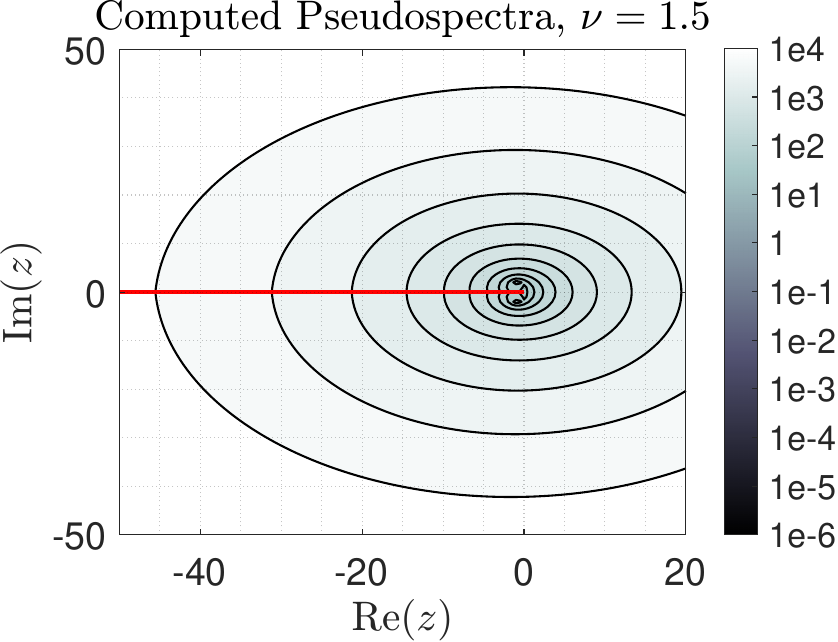}\hfill
\includegraphics[width=0.49\linewidth]{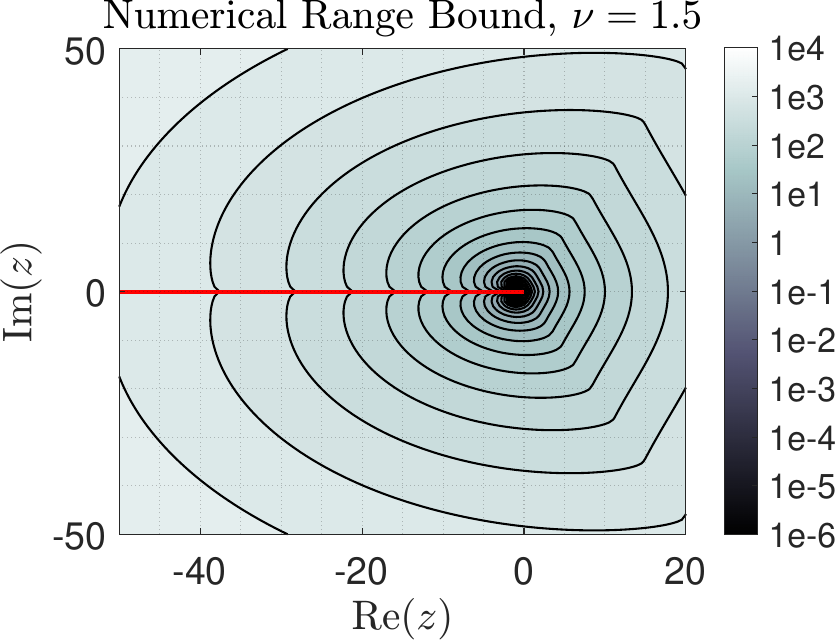}
\caption{Similar to \cref{fig:viscoelastic1}, but for $\nu=1.5$. In this case, the spectrum is bounded.}
\label{fig:viscoelastic3}
\end{figure}

\subsection{Diffusive Lotka--Volterra predator-prey system with delay}

We now consider the following predator-prey model with delay \cite{faria2001stability}:
\begin{gather*}
\frac{\partial u}{\partial t}(x,t)=d_1\tau \frac{\partial^2 u}{\partial^2 x}(x,t)+\tau u(x,t)\left[r_1-v(x,t-r)\right],\\
\frac{\partial v}{\partial t}(x,t)=d_2\tau \frac{\partial^2 v}{\partial^2 x}(x,t)+\tau v(x,t)\left[-r_2+u(x,t-1)\right],\\
u(-1,t)=2,\quad v(-1,t)=1,\quad u(1,t)=v(1,t)=0.
\end{gather*}
Here, $u(\cdot,t)$ and $v(\cdot,t)$ represent prey and predator population densities, respectively, on the domain $[-1,1]$. A discussion of the biological interpretation of these (and other possible) boundary conditions is given in \cite{fagan1999habitat}. Throughout this example, we take $d_1=0.15$, $d_2=0.5$, $\tau=0.1$, $r=\sqrt{2}$ and $r_1=1$. Following \cite{faria2001stability}, we consider steady-state (time-independent) solutions $(u^*,v^*)$. \cref{fig:prey1} shows examples of steady-state solutions, computed using a Chebyshev spectral method for $r_2=0.5$ and $r_2=100$.

\begin{figure}[t]
\centering
\includegraphics[width=0.49\linewidth]{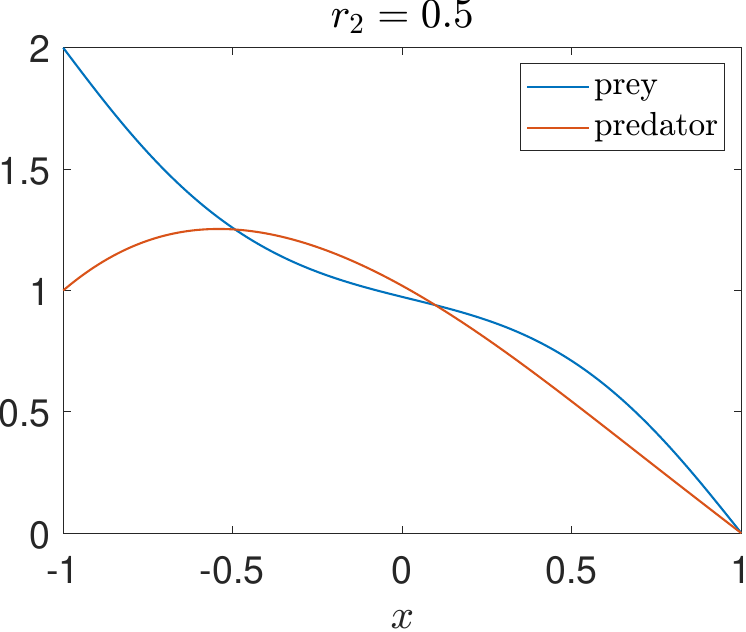}\hfill
\includegraphics[width=0.49\linewidth]{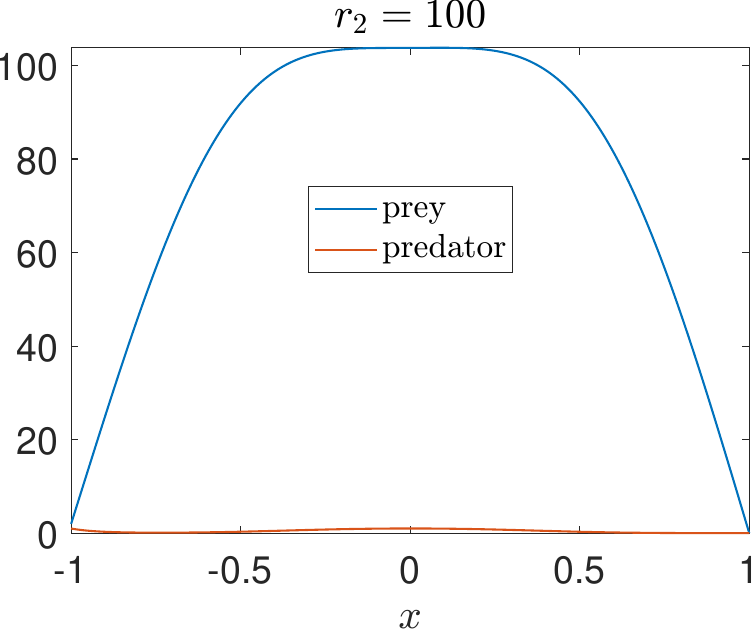}
\caption{Steady-state solutions of the diffusive Lotka--Volterra predator-prey system with delay.}
\label{fig:prey1}
\end{figure}

We then linearise the equations about these solutions and consider
$$
u(x,t)=u^*(x)+e^{\lambda t}\hat{u}(x),\quad v(x,t)=v^*(x)+e^{\lambda t}\hat{v}(x).
$$
This leads to the equations
\begin{gather*}
\begin{pmatrix}
\lambda-d_1\tau \frac{d^2}{d^2 x} -\tau r_1 +\tau v^* & e^{-r\lambda}\tau u^*\\
- e^{-\lambda}\tau v^*& \lambda-d_2\tau \frac{d^2}{d^2 x}+\tau r_2-\tau u^*
\end{pmatrix}
\begin{pmatrix}
\hat{u}\\
\hat{v}
\end{pmatrix}=0,\quad\hat{u}(\pm 1)=\hat{v}(\pm 1)=0,
\end{gather*}
which induce a corresponding operator pencil on $L^2([-1,1])\times L^2([-1,1])$. To apply \cref{alg1,alg2}, we consider the functions $(x^2-1)P_n(x)$, where, as above, $P_n$ denotes the $n$th Legendre polynomial. These functions satisfy the boundary conditions, and we apply the Gram--Schmidt process (numerically implemented via a QR decomposition for numerical stability) to obtain an orthonormal basis of $L^2([-1,1])$. We then expand using $500$ basis functions for each of $\hat{u}$ and $\hat{v}$. \cref{fig:prey2} shows the resulting pseudospectra. The steady-state solutions are stable for $r_2=0.5$ but unstable for $r_2=100$. However, even for $r_2=0.5$, large regions of the pseudospectra protrude into the right-half plane.

\begin{figure}[t]
\centering
\includegraphics[width=0.49\linewidth]{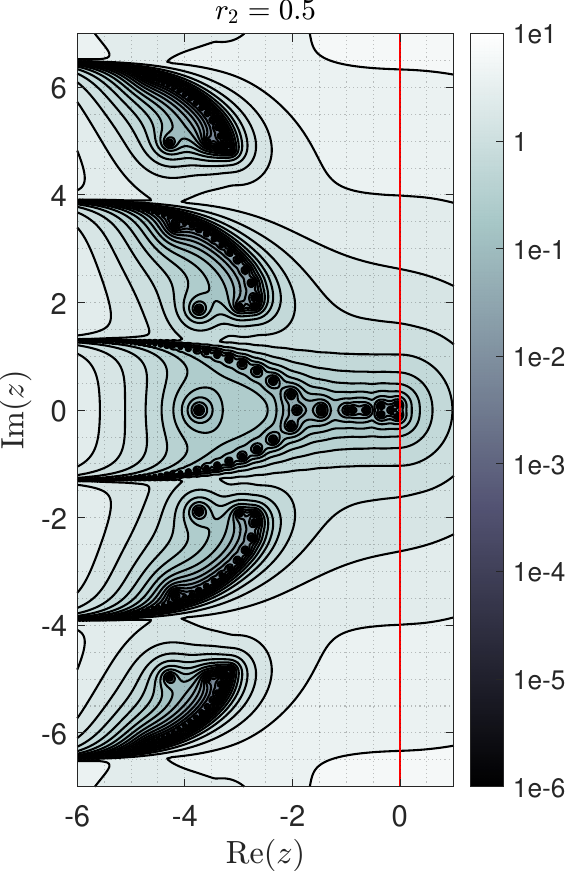}\hfill
\includegraphics[width=0.49\linewidth]{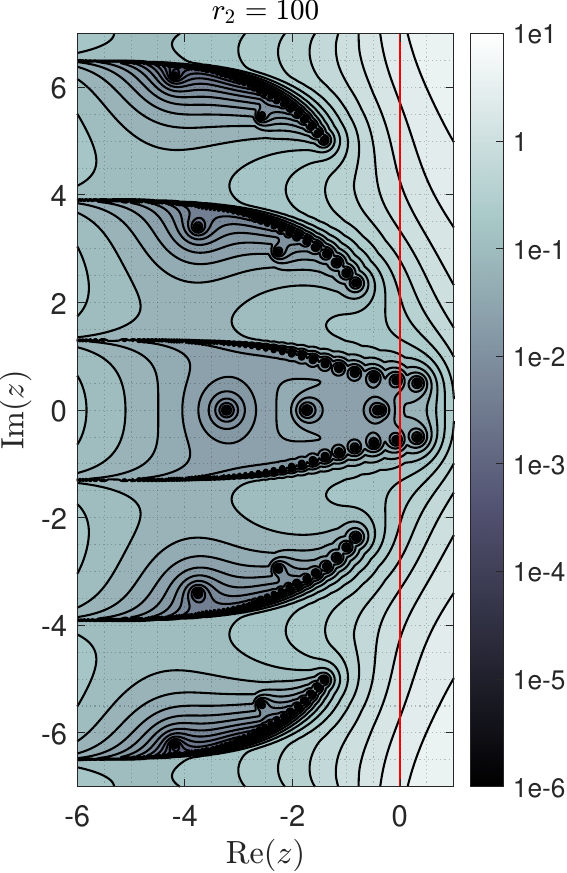}
\caption{Pseudospectra of the linearised predator-prey system, computed using \cref{alg1}. The red lines show the imaginary axis.}
\label{fig:prey2}
\end{figure}

\section*{Acknowledgements}
We would like to thank the London Mathematical Society for providing funds under their Research in Pairs Scheme for in-person collaboration in the Spring of 2025.

\small
\bibliographystyle{abbrv}

\bibliography{spectra_bib_final,bib_new}

\end{document}